\def\real     #1{{\mathbb R^{#1}}}
\def\complex  #1{{\mathbb C^{#1}}}
\def\natural  #1{{\mathbb N^{#1}}}
\def\dt       {\partial_{t}}
\def\equationcolor {\color{black}}
\def\textcolor     {\color{black}}
\def\bcoleq    {\begin{equation}\equationcolor}
\def\ecoleq    {\textcolor\end{equation}}
\def\bcoleqn   {\equationcolor\begin{eqnarray}}
\def\ecoleqn   {\end{eqnarray}\textcolor}
\def\gm{{\operatorname{g}_M}}
\def\gn{{\operatorname{g}_N}}
\def\gk{{\operatorname{g}_{M\times N}}}
\def\rn{{\operatorname{R}_N}}
\def\rk{{\tilde{R}}}
\def\rind{\operatorname{R}}
\def\dF{\operatorname{d}\hspace{-3pt}F}
\def\df{\operatorname{d}\hspace{-3pt}f}
\def\gind{\operatorname{g}}
\DeclareMathOperator*{\Id}{\operatorname{Id}}
\newtheorem{theorem}{Theorem}[section]
\newtheorem{mythm}{Theorem}
\newtheorem{lemma}[theorem]{Lemma}
\newtheorem{proposition}[theorem]{Proposition}
\newtheorem{definition}[theorem]{Definition}
\theoremstyle{definition}
\newtheorem{remark}[theorem]{Remark}
\newcommand{\bfig}{\begin{figure}}
\newcommand{\efig}{\end{figure}}
\def\pproof#1{\@ifnextchar[\opargproof
{\opargproof[\it Proof of #1.]}}
\def\opargproof[#1]{\par\noindent {\bf #1 }}
\numberwithin{equation}{section}
\begin{document}

\title[Deformation of maps]{Mean curvature flow of area decreasing maps between Riemann surfaces}
\author[Andreas Savas-Halilaj]{\textsc{Andreas Savas-Halilaj}}
\author[Knut Smoczyk]{\textsc{Knut Smoczyk}}

\renewcommand{\subjclassname}{  \textup{2000} Mathematics Subject Classification}
\subjclass[2010]{Primary 53C44, 53C42, 57R52, 35K55}
\keywords{Mean curvature flow, area decreasing maps, graphical surfaces, Riemann surfaces}
\thanks{The authors are supported by the grant DFG SM 78/6-1.}

\address{Andreas Savas-Halilaj\newline
Institut f\"ur Differentialgeometrie\newline
Leibniz Universit\"at Hannover\newline
Welfengarten 1\newline
30167 Hannover, Germany\newline
{\sl E-mail address:} {\bf savasha@math.uni-hannover.de}
}
\address{Knut Smoczyk\newline
Institut f\"ur Differentialgeometrie and\newline
Riemann Center for Geometry and Physics\newline
Leibniz Universit\"at Hannover\newline
Welfengarten 1\newline
30167 Hannover, Germany\newline
{\sl E-mail address:} {\bf smoczyk@math.uni-hannover.de}
}

\date{}

\begin{abstract}
In this article we give a complete description of the evolution of an area
decreasing map $f:M\to N$ induced by its mean curvature in the situation where $M$ and $N$ are complete Riemann surfaces with bounded geometry,
$M$ being compact, for which their sectional curvatures $\sigma_M$, $\sigma_N$ satisfy $\min\sigma_M\ge\sup\sigma_N$.
\end{abstract}

\maketitle
\setcounter{tocdepth}{1}

\section{Introduction}

Let $(M,\gm)$ and $(N,\gn)$ be two complete Riemann surfaces, with $(M,\gm)$ being compact. A smooth map
$f:M\to N$ is called {\it area decreasing} if $|\operatorname{Jac}(f)|\le 1$, where $\operatorname{Jac}(f)$ is the
{\it Jacobian determinant} of $f$ (for short just {\it Jacobian}). Being area decreasing means that the map $f$ contracts $2$-dimensional regions of $M$. If $|\operatorname{Jac}(f)|<1$
the map is called {\it strictly area decreasing} and if $|\operatorname{Jac}(f)|=1$
the map is said {\it area preserving}. Note that in the latter case $\operatorname{Jac}(f)=\pm 1$ depending on whether $f$ is
orientation preserving or orientation reversing map.
In this article we deform area decreasing maps $f$ by evolving their corresponding graphs
$$\Gamma(f):=\big\{(x,f(x))\in M\times N:x\in M\big\},$$
under the mean curvature flow in the Riemannian product $4$-manifold
$$(M\times N,\gk=\pi^*_M\gm+\pi^*_N\gn),$$
where here
$\pi^*_M:M\times N\to M$ and $\pi^*_N:M\times N\to N$ are the natural projection maps.
Our main goal is to show the
following theorem which generalizes  all the previous known results for area decreasing maps between
Riemann surfaces evolving under the mean curvature flow.

\begin{mythm}\label{thma}
Let $(M,\gm)$ and $(N,\gn)$ be complete Riemann surfaces, $M$ being compact and $N$ having bounded geometry.
Let $f:M\to N$ be a smooth area decreasing map.
Suppose that the sectional curvatures $\sigma_M$ of $\gm$ and $\sigma_N$ of $\gn$ are related by
$\min\sigma_M\ge\sup\sigma_N.$
Then there exists a family of smooth area decreasing maps $f_t:M\to N$, $t\in[0,\infty)$,
$f_0=f$, such that the graphs $\Gamma(f_t)$ of $f_t$ move by mean curvature flow in $(M\times N,\gk)$. Moreover, the family $\Gamma(f_t)$
smoothly converges to a limiting surface $M_{\infty}$ in $M\times N$. Furthermore, there exist only two possible categories of
initial data sets and corresponding solutions:
\begin{enumerate}[\rm (I)]
\item
The curvatures $\sigma_M$, $\sigma_N$ are constant and equal and the map $f_0$ is area preserving.
In this category, each $f_t$ is area preserving and the limiting surface $M_{\infty}$
is a minimal Lagrangian graph in $M\times N$, with respect to the symplectic form
$$\Omega_{M\times N}:=\pi^*_M\Omega_M\mp\pi^*_N\Omega_N,$$
depending on whether $f_0$ is orientation preserving or reversing, respectively. Here $\Omega_N$
and $\Omega_N$ are the positively oriented volume forms of $M$ and $N$, respectively.
\medskip
\item
All other possible cases. In this category, for $t>0$ each map $f_t$ is strictly area decreasing.
Moreover, depending on the sign of $\sigma:=\min\sigma_M$ we have the following behavior:
\smallskip
\begin{enumerate}
\item[\rm (a)]
If $\sigma>0$, then $M_{\infty}$ is the graph of a constant map.
\medskip
\item[\rm (b)]
If $\sigma=0$, then $M_{\infty}$ is a totally geodesic graph of $M\times N$.
\medskip
\item[\rm (c)]
If $\sigma<0$, then $M_{\infty}$ is a minimal surface of $M\times N$.
\end{enumerate}
\end{enumerate}
\end{mythm}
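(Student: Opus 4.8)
\emph{Step 1: graphical reduction.} The plan is to study the mean curvature flow of $\Gamma(f)$ in $(M\times N,\gk)$ as a graphical flow and to control two geometric quantities along it. Since $M$ is compact, $\Gamma(f)$ is a compact surface, and since $N$ has bounded geometry so does $(M\times N,\gk)$; hence there is a unique smooth solution $F_t$ of the flow with $F_0$ parametrising $\Gamma(f)$, defined on a maximal interval $[0,T_{\max})$. On the evolving surface I would track the graphical Jacobian $u_1:=\ast F_t^*(\pi^*_M\Omega_M)$ and the function $\phi:=\ast F_t^*\Omega_{M\times N}$, with $\ast$ the Hodge operator of the induced metric and $\Omega_{M\times N}=\pi^*_M\Omega_M\mp\pi^*_N\Omega_N$ as in the statement (the sign chosen according to the orientation behaviour of $f_0$). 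Writing $\lambda_1,\lambda_2\ge0$ for the singular values of $df$, one has $u_1=\big((1+\lambda_1^2)(1+\lambda_2^2)\big)^{-1/2}$, so $u_1>0$ exactly when the surface is a graph --- in which case it is $\Gamma(f_t)$ for $f_t=\pi_N\circ F_t\circ(\pi_M\circ F_t)^{-1}$ --- and $\phi=u_1(1-\lambda_1\lambda_2)$, so that $f_t$ is area decreasing, strictly area decreasing, or area preserving precisely when $\phi\ge0$, $\phi>0$, or $\phi\equiv0$. A positive lower bound for $u_1$ also bounds $|df_t|$, hence confines $f_t(M)$ to a region of $N$ of controlled geometry.

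\emph{Step 2: the hypotheses are preserved.} Since $M$ and $N$ are Riemann surfaces, $\Omega_M$ and $\Omega_N$ are parallel, and the evolution formula for the Hodge star of a parallel two-form under mean curvature flow gives, schematically,
\[
\Big(\partial_t-\Delta\Big)u_1=u_1|A|^2+u_1\mathcal{R}_1,\qquad
\Big(\partial_t-\Delta\Big)\phi=\phi|A|^2+\mathcal{R}_2,
\]
where $\mathcal{R}_1,\mathcal{R}_2$ are zeroth order terms built from $\sigma_M,\sigma_N$ along the graph. The pinching $\min\sigma_M\ge\sup\sigma_N$ makes these reaction terms cooperate: one shows $\mathcal{R}_1\ge0$, so that $\min_M u_1(\cdot,t)$ is non-decreasing, the flow stays a graph, and $u_1$ has a uniform positive lower bound; and one establishes a differential inequality of the form $(\partial_t-\Delta)\phi\ge-C\phi$ on $\{\phi\ge0\}$, so that the scalar maximum principle on $M\times[0,t]$ preserves $\phi\ge0$ and every $f_t$ stays area decreasing. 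I expect the inequality for $\phi$ to be the crux of the whole argument: for a map that is only area decreasing --- not strictly length decreasing --- the singular values are not individually bounded, so the curvature contribution to $(\partial_t-\Delta)\phi$ a priori contains terms such as $1-\lambda_i^2$ which are not sign-definite, and one must exploit the precise algebraic form produced by the particular two-form $\Omega_{M\times N}$ (perhaps together with an auxiliary combination of $u_1$ and $\phi$) to close the estimate. Once $\phi\ge0$ is preserved, the strong maximum principle yields the dichotomy of the theorem: if $\phi\not\equiv0$ at some time then $\phi>0$ for all later $t$, so each $f_t$ with $t>0$ is strictly area decreasing (Category (II)); and if $\phi\equiv0$ persists, the equality case forces the curvature contribution to $(\partial_t-\Delta)\phi$ to vanish on the Lagrangian graph of $f_0$, which forces $\sigma_M,\sigma_N$ constant and equal and $f_0$ area preserving (Category (I)). In Category (I), $(M\times N,\gk)$ with the complex structure underlying $\Omega_{M\times N}$ is K\"ahler--Einstein and $\Gamma(f_0)$ is Lagrangian, so by the theory of graphical Lagrangian mean curvature flow the flow stays Lagrangian, each $f_t$ remaining area preserving.

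\emph{Step 3: long-time existence and convergence.} With $u_1$ bounded below by a positive constant and $\phi\ge0$ preserved, the geometry of the flow is controlled. I would next derive a uniform bound $|A|\le C$ on $[0,T_{\max})$ by a maximum principle for a quantity of the form $|A|^2$ divided by a suitable power of $u_1$ (and, where needed, of $\phi$), the pinching again supplying the good sign --- this is the second delicate point --- and then interior parabolic estimates bound all higher derivatives, so $T_{\max}=\infty$. The area is monotone, $\tfrac{d}{dt}\operatorname{Area}\big(\Gamma(f_t)\big)=-\int_{\Gamma(f_t)}|H|^2$, and bounded below, whence $\int_0^\infty\!\int|H|^2\,dt<\infty$; so along some $t_k\to\infty$ the surfaces $\Gamma(f_t)$ converge smoothly to a limit $M_\infty$, which by the uniform bounds is still a graph --- of an area decreasing, resp. area preserving, map --- and is minimal. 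A {\L}ojasiewicz--Simon inequality at the critical point $M_\infty$ (or a direct exponential-decay argument based on the reaction terms above) then upgrades subconvergence to smooth convergence of the entire flow.

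\emph{Step 4: identification of $M_\infty$.} In Category (I) the flow is Lagrangian and immortal, and by graphical Lagrangian mean curvature flow theory it converges to a minimal Lagrangian graph with respect to $\Omega_{M\times N}$, which is the first alternative. In Category (II), $M_\infty$ is a compact minimal graph of an area decreasing map satisfying $\min\sigma_M\ge\sup\sigma_N$, and one classifies such graphs by an elliptic maximum principle argument for $\phi$ --- the Bernstein-type counterpart of Step 2 --- using that on a minimal surface the curvature term in $\Delta_{M_\infty}\phi=-\phi|A|^2-\mathcal{R}_2$ rearranges, via the Gauss equation, into a favourable form. When $\sigma=\min\sigma_M>0$ this forces $\phi$ to be a positive constant and, by a Bochner-type argument, the limiting map to have vanishing differential, so $M_\infty$ is the graph of a constant map (case (a)); when $\sigma=0$ it forces $|A|\equiv0$, so $M_\infty$ is totally geodesic (case (b)); and when $\sigma<0$ the argument degrades and one concludes only that $M_\infty$ is minimal (case (c)). Finally, checking that the split between the two categories is governed exactly by constancy and equality of $\sigma_M,\sigma_N$ together with area-preservation of $f_0$ --- as opposed to all other configurations --- completes the proof.
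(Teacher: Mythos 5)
Your global outline (track graphicality and an area-decreasing quantity, preserve them, bound $\|A\|$, conclude by Simon's theorem, then classify $M_\infty$) matches the paper, but two of your key steps are wrong or not carried out as claimed, and a third central device is missing.

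First, in Step~2 you assert that the reaction term in the evolution of $u_1$ is non-negative, so that $\min_M u_1(\cdot,t)$ is non-decreasing and $u_1$ has a uniform positive lower bound. This is false in general. The evolution is
$$\partial_t u_1 - \Delta u_1 = \|A\|^2 u_1 + 2\sigma^\perp u_2 + \sigma_M(1-u_1^2-u_2^2)u_1 - 2\sigma_N u_1 u_2^2,$$
and the terms $2\sigma^\perp u_2$ and, when $\sigma_M<0$, $\sigma_M(1-u_1^2-u_2^2)u_1$ can both be negative. The paper only extracts $\partial_t u_1 - \Delta u_1 \ge h\,u_1$ with $h$ bounded, which keeps $u_1>0$ (so the surface stays a graph) but yields no uniform lower bound. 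Indeed for $\sigma<0$ and $T=\infty$ the paper explicitly allows the largest singular value of $f_t$ to diverge, i.e.\ $u_1\to 0$. The conclusion you draw from the uniform lower bound --- that $|df_t|$ is bounded and $f_t(M)$ stays in a controlled region of $N$ --- is therefore unavailable. What the paper does control is the product $\rho=\varphi\vartheta=u_1^2-u_2^2$: Lemma~\ref{crucial} gives an explicit, but time-dependent, lower bound $\rho\ge c_0 e^{\sigma t}/\sqrt{1+c_0^2 e^{2\sigma t}}$ when $\sigma\ge0$ (and a version with $e^{2\sigma t}$ when $\sigma<0$). This quantity $\rho$ (not $u_1$) is what drives all the later estimates --- for instance the bound $\|H\|^2\le C$ is proved by applying the maximum principle to $\log\big(\|H\|^2/\rho\big)$ --- and you never identify it.

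Second, your Step~3 proposes bounding $\|A\|$ by a direct maximum-principle argument on something like $|A|^2$ divided by powers of $u_1$ and $\phi$. That is not what the paper does, and there is no indication it can be made to close, especially when $\sigma<0$. The paper instead argues by contradiction via a parabolic blow-up (Theorem~\ref{blow}): if $\|A\|$ blows up, rescale to obtain a complete minimal surface $F_\infty:\Sigma_\infty\to\mathbb R^4$ with $\|A_{F_\infty}\|\le1$ and equal to $1$ somewhere. For $\sigma\ge0$, the singular values stay bounded, the rescaled K\"ahler angles $\varphi_\infty,\vartheta_\infty$ are positive superharmonic functions on the parabolic surface $\Sigma_\infty$, hence constant, forcing $A_{F_\infty}\equiv0$, a contradiction. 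For $\sigma<0$ the argument is more delicate: either the Jacobian stays away from $1$ and one proceeds as before, or the blow-up limit is a complete minimal Lagrangian immersion; the paper then invokes the classical parametrisation of minimal Lagrangian surfaces in $\mathbb C^2$ by pairs of holomorphic functions, shows the Gauss image omits an open set, and concludes flatness from Osserman's theorem --- again a contradiction. None of these non-trivial steps (parabolicity of the blow-up limit, Osserman's theorem) appear in your proposal, and your maximum-principle alternative is conjectural. Until a bound on $\|A\|$ is actually established, long-time existence and convergence in Step~3 do not follow.
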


\begin{remark}
Some parts of Theorem \ref{thma}, especially in the case where $\sigma_M$ and $\sigma_N$ are constant, are already known. More precisely:
\begin{enumerate}[(i)]
\item
If the initial data set belongs to category (I), then $N$ is compact because $f_0$ is a local diffeomorphism.
On the other hand, the maps $f_t$
will be area preserving for all $t$ since this is a special case of the Lagrangian mean
curvature flow (see \cite{sm1} or the survey paper \cite{sm}). Now the statement
of category (I) follows from the results in Wang \cite{wang2} and  Smoczyk \cite{smoczyk1}.
\smallskip
\item
If the initial data set belongs to category (II), that is either $f_0$ is not area preserving everywhere or
$\sigma_M=\sigma=\min\sigma_M=\sigma_N$ does not hold at each point, then (as will be shown in Lemma \ref{jacobians})
$f_t$ will be strictly area decreasing for all $t>0$. Then, if $N$ is compact, part (a) of category (II)
was shown in \cite{savas2}.
\smallskip
\item
In the category (IIc), the minimal surface $M_{\infty}$ is not necessarily totally geodesic. One reason
is that there is an abundance of examples of minimal graphs that are generated by area decreasing maps
between two negatively curved compact hyperbolic surfaces. For instance, any holomorphic map
between compact hyperbolic spaces is area decreasing due to the Schwarz-Pick-Yau Lemma
\cite{yau1} and its graph is minimal.
\end{enumerate}
\end{remark}

Another aim of this paper is to obtain curvature estimates for the graphical mean curvature and for the second fundamental
form of the evolving graph. In particular, we prove the following theorem.

\begin{mythm}\label{thmb}
Suppose that $(M,\gm)$ and $(N,\gn)$ are Riemann surfaces as in Theorem \ref{thma} and $f:M\to N$ a smooth strictly area decreasing map.
Suppose that the sectional curvatures $\sigma_M$ of $\gm$ and $\sigma_N$ of $\gn$ are 
related by
$$\sigma:=\min\sigma_M\ge\sup\sigma_N.$$
Then, depending on the sign of the constant $\sigma$, we have the following
decay estimates for the mean curvature flow of the graph of $f$ in the product Riemannian manifold $(M\times N,\gk)$:
\begin{enumerate}[\rm(a)]
\item
If $\sigma>0$, then there exists a uniform time independent constant $C$ such that the norm of the second
fundamental form satisfies
$$\|A\|^2\le{C}{t^{-1}}.$$
\item
If $\sigma=0$, then there exists a uniform time independent constant $C$ such that the norms of the
second fundamental form and of the mean curvature satisfy
$$\quad\quad\|A\|^2\le C,\quad \int_{M}\|A\|^2\Omega_M\le{C}{t}^{-1}\quad\text{and}
\quad \|H\|^2\le C t^{-1}.$$
\item
If $\sigma<0$, then there exists a uniform time independent constant $C$ such that
$$\|A\|^2\le C.$$
\end{enumerate}
\end{mythm}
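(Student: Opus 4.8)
\emph{Strategy.} Write $(\partial_t-\Delta)$ for the heat operator along the flow and $A,H$ for the second fundamental form and mean curvature vector of $\Gamma(f_t)$. By Theorem \ref{thma} the flow exists on $M\times[0,\infty)$, each $\Gamma(f_t)$ is a strictly area decreasing graph, and by (the proof of) Lemma \ref{jacobians} the quantity controlling the area decreasing defect of $f_t$ --- in our normalisation $u=(1-\lambda_1\lambda_2)/\sqrt{(1+\lambda_1^2)(1+\lambda_2^2)}\in(0,1]$, where $\lambda_1,\lambda_2\ge 0$ are the singular values of $df_t$, so that $u=1$ exactly at a point graph --- satisfies $u\ge u_\ast$ on $M\times[0,\infty)$ for a time independent $u_\ast>0$; moreover $\mathrm d\mu_t=\tfrac{1-\lambda_1\lambda_2}{u}\,\Omega_M$, so $\mathrm d\mu_t$ and $\Omega_M$ are uniformly comparable. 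Two evolution relations drive the estimate. First, a Simons-type computation for the flow in $(M\times N,\gk)$, whose curvature and its first covariant derivative are bounded ($M$ compact, $N$ of bounded geometry), gives
\begin{equation*}
(\partial_t-\Delta)\|A\|^2\le -2\|\nabla A\|^2+c_1\|A\|^4+c_2\|A\|^2,
\end{equation*}
together with the pointwise gradient bound $|\nabla u|\le c_3\|A\|$ (since $u$ is a smooth function of the Gauss map of $\Gamma(f_t)$). Second --- and this is where both hypotheses enter --- using $\min\sigma_M\ge\sup\sigma_N$ to dispose of the ambient curvature terms with the favourable sign, one shows
\begin{equation*}
(\partial_t-\Delta)u\ge u\,\|A\|^2+2\sigma\,\theta,
\end{equation*}
with $\sigma=\min\sigma_M$ and $\theta\ge 0$ a bounded function of the singular values of $df_t$.

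\emph{The pointwise bounds.} Since $M$ is compact the parabolic maximum principle applies with no cut-off. I would test with $\phi:=\|A\|^2\,\varphi(u)$, the prototype being $\phi=\|A\|^2u^{-\beta}$. Inserting the two relations, the $(\partial_t-\Delta)u$ term produces the strictly negative quartic term $-\beta u^{-\beta}\|A\|^4$; the gradient cross term $2\beta u^{-\beta-1}\langle\nabla\|A\|^2,\nabla u\rangle$ and the convexity term are handled by completing the square against $-2u^{-\beta}\|\nabla A\|^2$ and absorbing the residue into a fixed fraction of the quartic term via $|\nabla u|\le c_3\|A\|$ and $u_\ast\le u\le 1$. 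Fixing $\beta$ also large enough to control $c_1$, one is led to an inequality of the form
\begin{equation*}
(\partial_t-\Delta)\phi\le -\varepsilon\,\phi^2+c_2\,\phi-2\beta\sigma\,\theta\,u^{-\beta}\|A\|^2
\end{equation*}
for a uniform $\varepsilon>0$. If $\sigma<0$ the last term is $\le c_4\phi$, so $(\partial_t-\Delta)\phi\le-\varepsilon\phi^2+c_5\phi$, and comparison with $\psi'=-\varepsilon\psi^2+c_5\psi$ forces $\phi\le\max\{\max_M\phi(\cdot,0),c_5/\varepsilon\}$, whence $\|A\|^2\le\phi\le C$: this is (c). If $\sigma>0$, the curvature contributions from the two evolution relations, governed by $\sigma_M\ge\sigma>0\ge\sigma_N$, combine with favourable sign to leave $(\partial_t-\Delta)\phi\le-\varepsilon\phi^2$; then $(\partial_t-\Delta)(t\phi)\le\phi(1-\varepsilon t\phi)$, and since $t\phi$ vanishes at $t=0$ the maximum principle gives $t\phi\le 1/\varepsilon$, i.e. $\|A\|^2\le C/t$: this is (a). If $\sigma=0$ the inequality reads $(\partial_t-\Delta)\phi\le-\varepsilon\phi^2+c_2\phi$, which as in (c) yields the uniform bound $\|A\|^2\le C$, the first assertion of (b).

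\emph{The decay in the case $\sigma=0$.} As $\sigma=0$ one has $(\partial_t-\Delta)u\ge u\|A\|^2$ and $u\le 1$, so $E(t):=\int_{\Gamma(f_t)}(1-u)\,\mathrm d\mu_t$ is nonincreasing with $E'(t)\le-u_\ast\int_{\Gamma(f_t)}\|A\|^2\,\mathrm d\mu_t$, giving $\int_0^\infty\!\int_{\Gamma(f_t)}\|A\|^2\,\mathrm d\mu_t\,\mathrm dt<\infty$; likewise $\int_0^\infty\!\int_{\Gamma(f_t)}\|H\|^2\,\mathrm d\mu_t\,\mathrm dt=\mathrm{Area}(\Gamma(f_0))-\mathrm{Area}(M_\infty)<\infty$. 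Feeding the uniform bound $\|A\|^2\le C$ and this space--time integrability of $\|H\|^2$ into a maximum-principle argument for $t\,\|H\|^2$ on the compact $M$ yields $\|H\|^2\le C/t$. Finally, the Gauss equation for $\Gamma(f_t)\subset M\times N$ together with Gauss--Bonnet gives
\begin{equation*}
\int_{\Gamma(f_t)}\|A\|^2\,\mathrm d\mu_t=\int_{\Gamma(f_t)}\|H\|^2\,\mathrm d\mu_t+2\int_{\Gamma(f_t)}\frac{\sigma_M+\sigma_N\lambda_1^2\lambda_2^2}{(1+\lambda_1^2)(1+\lambda_2^2)}\,\mathrm d\mu_t-4\pi\chi(M),
\end{equation*}
and when $\sigma=0$ one has $\sigma_M\ge 0\ge\sigma_N$, so the middle integral is $\le 2\int_M\sigma_M\,\Omega_M=4\pi\chi(M)$ (using $\mathrm d\mu_t/((1+\lambda_1^2)(1+\lambda_2^2))=((1+\lambda_1^2)(1+\lambda_2^2))^{-1/2}\,\Omega_M\le\Omega_M$); the two topological terms cancel, and with $\|H\|^2\le C/t$ and $\mathrm{Area}(\Gamma(f_t))\le\mathrm{Area}(\Gamma(f_0))$ we get $\int_{\Gamma(f_t)}\|A\|^2\,\mathrm d\mu_t\le C/t$, hence $\int_M\|A\|^2\,\Omega_M\le C/t$ by comparability of $\mathrm d\mu_t$ with $\Omega_M$.

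\emph{Main obstacle.} The heart of the matter is the second evolution relation: proving $(\partial_t-\Delta)u\ge u\|A\|^2+2\sigma\theta$ with the correct sign is precisely what consumes the hypotheses that $f$ is area decreasing and that $\min\sigma_M\ge\sup\sigma_N$, and it must be matched by a choice of $\varphi$ (equivalently of $\beta$) for which all the gradient terms genuinely get absorbed, so that the coefficient of $\phi^2$ stays negative --- the constants have to cooperate, and this is the computational core. In the borderline case $\sigma=0$ the remaining difficulty is the pointwise decay $\|H\|^2\le C/t$: the maximum principle for $\|H\|^2$ has no quartic reaction term to exploit, so one must instead combine the uniform bound on $\|A\|^2$, the space--time integrability of $\|H\|^2$ coming from the monotonicity of area, and interior estimates.
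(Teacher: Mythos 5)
Your maximum-principle strategy is natural, but it has two genuine gaps that the paper's own argument is specifically built to circumvent, plus a misstated evolution inequality.

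\textbf{The case $\sigma<0$ cannot be handled by your method.} Your absorption of the gradient cross terms and your final comparison both rest on the claim that $u\ge u_*>0$ uniformly on $M\times[0,\infty)$. This is false precisely in case (c). Lemma \ref{crucial}(b) of the paper gives only $\rho\ge c_0\,e^{2\sigma t}/\sqrt{1+c_0^2 e^{4\sigma t}}$, which tends to $0$ as $t\to\infty$ when $\sigma<0$; and in Case~B of the proof of Theorem \ref{blow1} the paper explicitly allows $\operatorname{Jac}(f_t)\to 1$ and the largest singular value of $f_t$ to blow up. With $u$ (or $\rho$) possibly decaying, the weight $u^{-\beta}$ becomes uncontrolled, the constants in the completion-of-squares degenerate in $t$, and the inequality $(\partial_t-\Delta)\phi\le-\varepsilon\phi^2+c_5\phi$ is never established uniformly. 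The paper instead proves $\|A\|^2\le C$ (for every sign of $\sigma$) by a contradiction argument through Cheeger--Gromov compactness: rescaling at a hypothetical singularity yields a complete minimal surface in $\mathbb R^4$ of parabolic conformal type, which is shown to be flat using the K\"ahler-angle equations, and in the Lagrangian limiting case the structure theory of minimal Lagrangian surfaces in $\mathbb C^2$ together with Osserman's theorem on the Gauss map. No direct maximum-principle proof of this bound is offered for $\sigma<0$, and your sketch does not supply one.

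\textbf{Your key evolution inequality is not correct for your choice of $u$.} You take $u=u_1-|u_2|$, which equals $\varphi=u_1-u_2$ or $\vartheta=u_1+u_2$ according to orientation, and assert $(\partial_t-\Delta)u\ge u\|A\|^2+2\sigma\theta$. But Lemma \ref{jacobians} gives
\begin{equation*}
(\partial_t-\Delta)\varphi=\big(\|A\|^2-2\sigma^{\perp}\big)\varphi+\cdots,\qquad
(\partial_t-\Delta)\vartheta=\big(\|A\|^2+2\sigma^{\perp}\big)\vartheta+\cdots,
\end{equation*}
and the reaction coefficients $\|A\|^2\mp 2\sigma^{\perp}$, though nonnegative, can vanish while $A\neq 0$. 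The $\|A\|^2$ reaction is only clean for $\rho=\varphi\vartheta=u_1^2-u_2^2$, where the $\sigma^\perp$ terms cancel and one gets $(\partial_t-\Delta)\rho\ge 2\rho\|A\|^2-\tfrac{1}{2\rho}\|\nabla\rho\|^2+2\sigma\rho(1-u_1^2-u_2^2)$. This is exactly why the paper weights by $\rho$ (through $\eta(\rho)$), not by $u_1-|u_2|$.

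\textbf{The $\sigma>0$ case is more delicate than your sketch allows.} The Simons identity for $\|A\|^2$ on an ambient with bounded but non-parallel curvature contains the term $2\sum A^\alpha_{jk}\{(\nabla_i\tilde R)_{\alpha jki}+(\nabla_k\tilde R)_{\alpha iji}\}$, which is only \emph{linear} in $A$ and does not fit under $c_1\|A\|^4+c_2\|A\|^2$. In Lemma \ref{evsecond} the paper bounds it by $C\|A\|\sqrt{1-\rho^2}$ and then critically exploits the exponential decay $\sqrt{1-\rho^2}\le c_0^{-1}e^{-\sigma t}$ from Lemma \ref{crucial}(a), valid precisely because $\sigma>0$. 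The paper's test function is also of a different type, $\log\!\big((t\|A\|^2+1)/\eta(\rho)\big)$ with $\eta(\rho)=(-1/3+\sqrt{\rho})^2$, chosen so that the coefficient of $\|A\|^2$ is non-positive rather than to manufacture a quartic term; and it is only applied for $t\ge t_0$ where $\rho>1/9$, the short-time interval being covered by the uniform bound from Theorem \ref{blow1}. Your proposal tries to obtain the uniform bound and the $1/t$ decay by a single comparison; in the paper the uniform bound (via blow-up) logically precedes the decay estimate.

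Your integral estimate in (b) via Gauss--Bonnet matches the paper. For $\|H\|^2\le C/t$ the paper has a short direct maximum-principle argument (Theorem \ref{estmean}) with the test function $\log\!\big((t\|H\|^2+1)/\rho\big)$, using that for $\sigma\ge 0$ the curvature term enters with favourable sign; your route through space--time integrability and interior estimates is sketchier and, as presented, incomplete.
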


\begin{remark}
Similar curvature decay estimates for the norm of the second fundamental form in the
case $\sigma>0$,
were obtained also by Lubbe \cite{lubbe}. Explicit curvature decay estimates have been obtained
recently by Smoczyk, Tsui and Wang \cite{stw} in the case of strictly area decreasing 
Lagrangian maps between flat Riemann surfaces.
\end{remark}

\section{Geometry of graphical surfaces}
In this section we recall some basic facts about graphical surfaces. Some of these can be found in our previous papers 
\cite{savas3,savas2,savas1}.
In order to make the paper self-contained let us recall very briefly some of them here.

\subsection{Notation}
Let $F:\Sigma\to L$ be an isometric embedding of an $m$-dimensional 
Riemannian manifold 
$\big(\Sigma,\gind\big)$ to a Riemannian manifold $\big(L,\langle\cdot\,,\cdot\rangle\big)$ of dimension $l$. We denote by ${\nabla}$
the Levi-Civita connection associated to $\gind$ and by
$\tilde{\nabla}$ the corresponding Levi-Civita of
$\langle\cdot\,,\cdot\rangle$. The differential $\dF$ is a section in $F^{\ast}TL\otimes 
T^*\Sigma$. Let $\nabla^F$ be the connection induced by $F$ on this bundle.
The covariant derivative of $\dF$ is called the \textit{second fundamental 
tensor} $A$ of $F$, i.e.,
$$ A(v_1,v_2):=\big(\nabla^{F}\dF\big)
(v_1,v_2)=\tilde\nabla_{\dF(v_1)}\dF(v_2)-\dF\big(\nabla_{v_1}v_2\big),$$
for any $v_1,v_2\in T\Sigma$. Note that the second fundamental form maps
to the normal bundle $\mathcal{N}\Sigma$. The {\it second fundamental form
with respect to a normal direction} $\xi$ is denoted by $A^{\xi}$, that is 
$A^{\xi}(v_1,v_2):=\langle A(v_1,v_2),\xi\rangle$, for any pair $v_1,v_2\in T\Sigma$.
The trace $H$ of $A$ with respect to $\gind$ is
called  the {\it mean curvature vector field} of the graph. If $H$ vanishes 
identically  then the embedding $F$ is called {\it minimal}.

The normal bundle $\mathcal{N}\Sigma$ admits a natural connection
which we denote by $\nabla^{\perp}$. Let us denote by $\rind$, 
$\tilde{\rind}$ and $\rind^{\perp}$ the curvature operators of $T\Sigma$,
$TL$ and $\mathcal{N}\Sigma$, respectively. Then these tensors are related with $A$
through the Gau{\ss}-Codazzi-Ricci equations. Namely:
\begin{enumerate}[\rm (a)]
\item{\bf Gau{\ss} equation}
\begin{eqnarray*}
\quad\,\,\,\rind(v_1,v_2,v_3,v_4)&=&F^{\ast}\tilde{\rind}(v_1,v_2,v_3,v_4)\\
&&+\langle A(v_1,v_3),A(v_2,v_4)\rangle
-\langle A(v_2,v_3),A(v_1,v_4)\rangle,
\end{eqnarray*}
for any $v_1,v_2,v_3,v_4\in T\Sigma$.
\medskip
\item{\bf Codazzi equation}
$$\big(\nabla^{\perp}_{v_1}A\big)(v_2,v_3)-\big(\nabla^{\perp}_{v_2}A\big)(v_1,v_3)
=-\sum_{\alpha=m+1}^{l}\tilde\rind(v_1,v_2,v_3,\xi_{\alpha})\xi_{\alpha},$$
where $v_1,v_2,v_3\in T\Sigma$ and $\{\xi_{m+1},\dots,\xi_{l}\}$ is a local
orthonormal frame field in the normal bundle of $F$.
\medskip
\item{\bf Ricci equation}
\begin{eqnarray*}
&&\rind^{\perp}(v_1,v_2,\xi,\eta)=\tilde\rind\big(\dF(v_1),\dF(v_2),\xi,\eta\big)\\
&&\quad\quad\quad\quad\quad\quad
+\sum_{k=1}^m\big\{A^{\xi}(v_1,e_k)A^{\eta}(v_2,e_k)
-A^{\eta}(v_1,e_k)A^{\xi}(v_2,e_k)\big\},
\end{eqnarray*}
where here $v_1,v_2\in T\Sigma$, $\xi,\eta\in\mathcal{N}\Sigma$ and $\{e_1,\dots,e_m\}$ is a local orthonormal frame field with respect to $\gind$.
\end{enumerate}

\subsection{Graphs}
Suppose now that the manifold $L$ is a product of two Riemann surfaces $(M,\gm)$ and $(N,\gn)$
and that $f:M\to N$ is a smooth map.
The induced metric on the product manifold will 
be denoted by
$$\gk:=\langle\cdot\,,\cdot\rangle=\gm\times \gn.$$
Define the embedding $F:M\to M\times N$, given by
$$F(x):=(\Id\times f)(x)=\bigl(x,f(x)\bigr),$$
for any point $x\in M$. The graph of $f$ is defined to be the submanifold $\Gamma(f):=F(M)$.
Since 
the map $F$ is an embedding,
it induces another Riemannian metric
$\gind:=F^*\gk$
on $M$. Following Schoen\rq{s} \cite{schoen} terminology, we call $f$ a {\it minimal map}
if its graph $\Gamma(f)$ is a minimal submanifold of $M\times N$.
The natural projections
$\pi_{M}:M\times N\to N$, $\pi_{N}:M\times N\to N$
are submersions. Note that the tangent bundle  of the product manifold
$M\times N$, splits as a direct sum
\begin{equation*}
T(M\times N)=TM\oplus TN.
\end{equation*}
The metrics $\gm,\gk$ and $\gind$ are related by
\begin{eqnarray*}
\gk&=&\pi_M^*\gm+\pi_N^*\gn\,,\label{met1}\\
\gind&=&\gm+f^*\gn\,.\label{met2}
\end{eqnarray*}
The Levi-Civita connection $\tilde\nabla$ of the product manifold is
related to the Levi-Civita connections $\nabla^{\gm}$ and $\nabla^{\gn}$ by
$$\tilde\nabla=\pi_M^*\nabla^{\gm}\oplus\pi_N^*\nabla^{\gn}\,.$$
The corresponding curvature operator $\rk$ is related to the curvature
operators $\operatorname{R}_M$ and $\rn$ by
\begin{equation*}
\rk=\pi^{*}_{M}\operatorname{R}_M\oplus\pi^{*}_{N}\rn.
\end{equation*}
The Levi-Civita connection of $\gind$ will be denoted by $\nabla$, its curvature
tensor by $\rind$ and it sectional curvature by $\sigma_{\gind}$. We denote the sectional curvatures of
$(M,\gm)$ and $(N,\gn)$ by $\sigma_M$ and $\sigma_N$, respectively.

\subsection{Singular decomposition}\label{singular}
Let us recall here some basic Linear Algebra constructions. Fix a point $x\in M$. Let $\lambda^2\le\mu^2$ be the eigenvalues of $f^*\gn$ with respect to $\gm$ at $x$ and denote by
$\{\alpha_1,\alpha_2\}$ a positively oriented orthonormal (with respect to $\gm$) basis of eigenvectors.
The corresponding values $0\le\lambda\le\mu$ are called {\it singular values} of $f$ at $x$. Then, there exists an orthonormal (with respect to $\gn$) basis $\{\beta_{1},\beta_{2}\}$ of $T_{f(x)}N$ such that
$$\df(\alpha_{1})=\lambda\beta_{1}\quad\text{and}\quad
\df(\alpha_{2})=\mu\beta_{2}.$$
Indeed, in the case where the values $\lambda$ and $\mu$ are strictly positive, one may define them as
$$\beta_1:=\frac{\df(\alpha_1)}{\|\df(\alpha_1)\|}\quad\text{and}\quad\beta_2:=\frac{\df(\alpha_2)}{\|\df(\alpha_2)\|}.$$
In the case where $\lambda$ vanishes and $\mu$ is positive, define first $\beta_2$ by
$$\beta_2:=\frac{\df(\alpha_2)}{\|\df(\alpha_2)\|}$$
and take as $\beta_1$ a unit vector perpendicular to $\beta_2$. In the special case where both $\lambda$ and $\mu$ are zero, we may take an arbitrary orthonormal basis of $T_{f(x)}N$.
This procedure is called the \textit{singular decomposition}
of the differential $\df$ of the map $f$. Observe that
$$v_1:=\frac{\alpha_1}{\sqrt{1+\lambda^2}}\quad\text{and}\quad v_2:=\frac{\alpha_2}{\sqrt{1+\mu^2}}$$
are orthonormal with respect to the metric $\gind$. Hence, the vectors
\begin{equation*}
e_{1}:=\frac{1}
{\sqrt{1+\lambda^2}}\big(\alpha_1\oplus\lambda\beta_1\big)
\quad\text{and}\quad
e_{2}:=\frac{1}
{\sqrt{1+\mu^2}}\big(\alpha_2\oplus\mu\beta_2\big)\label{tangent}
\end{equation*}
form an orthonormal basis with respect to the metric $\gk$ of the tangent space 
$\dF\left(T_{x}M\right)$ of the graph $\Gamma(f)$ at
$x$. Moreover, the vectors
\begin{equation*}
e_{3}:=\frac{1}
{\sqrt{1+\lambda^2}}\big(-\lambda\alpha_1\oplus\beta_1\big)
\quad\text{and}\quad
e_{4}:=\frac{1}
{\sqrt{1+\mu^2}}\big(-\mu\alpha_2\oplus\beta_2\big)\label{normal}
\end{equation*}
form an orthonormal basis with respect to  $\gk$ of the normal space $\mathcal{N}_{x}M$ of the
graph $\Gamma(f)$ at the point $f(x)$. Observe now that
$$e_1\wedge e_2\wedge e_3\wedge e_4=\alpha_1\wedge\alpha_2\wedge\beta_1\wedge\beta_2.$$
Consequently, $\{e_3,e_4\}$ is an oriented basis of the normal space $\mathcal{N}_xM$ if and only if $\{\alpha_1,\alpha_2,\beta_1,\beta_2\}$ is an oriented basis of $T_xM\times T_{f(x)}N$.

The {\it area functional} $A(f)$ of the graph is given by
$$A(f):=\int_M \sqrt{\det(\gm+f^*\gn)}\,\Omega_M=\int_M \sqrt{(1+\lambda^2)(1+\mu^2)}\,\Omega_M.$$

\subsection{Jacobians of the projection maps}\label{sec 2.4}
As before let $\Omega_M$ denote the K\"ahler form of the Riemann surface $(M,\gm)$ and $\Omega_N$ the K\"ahler form of $(N,\gn)$.
We can extend $\Omega_M$ and $\Omega_N$ to two parallel $2$-forms on the product manifold $M\times N$ by pulling them back via the projection maps $\pi_M$
and $\pi_N$. That is we may define the parallel forms
$$\Omega_1:=\pi^*_M\Omega_M\quad\text{and}\quad\Omega_2:=\pi^*_N\Omega_N.$$
Define now two smooth functions $u_1$ and $u_2$ given by
$$u_1:=\ast (F^*\Omega_1)=\ast\big\{(\pi_M\circ F)^*\Omega_M\big\}=\ast ({\Id}^*\Omega_M) $$
and
$$u_2:=\ast (F^*\Omega_2)=\ast\big\{(\pi_N\circ F)^*\Omega_N\big\}=\ast (f^*\Omega_N) $$
where here $\ast$ stands for the Hodge star operator with respect to the metric $\gind$. Note that $u_1$
is the Jacobian of the projection map from $\Gamma(f)$ to the first factor of $M\times N$ and $u_2$ is the Jacobian
of the projection map of $\Gamma(f)$ to the second factor of $M\times N$.
With respect to the basis $\{e_1,e_2,e_3,e_4\}$ of the singular decomposition, we can write
$$u_1=\frac{1}{\sqrt{(1+\lambda^2)(1+\mu^2)}}\quad\text{and}\quad |u_2|=\frac{\lambda\mu}{\sqrt{(1+\lambda^2)(1+\mu^2)}}.$$
Note also that
$$\operatorname{Jac}(f):=\frac{*(f^*\Omega_N)}{*(\Id^*\Omega_M)}=\frac{u_2}{u_1}.$$
Moreover, the difference between $u_1$ and $|u_2|$ measures how far $f$ is from being area preserving. In particular:
\begin{eqnarray*}
u_1-|u_2|\ge 0 &\Leftrightarrow& f \text{ is area decreasing},\\
u_1-|u_2|> 0 &\Leftrightarrow& f \text{ is strictly area decreasing},\\
u_1-|u_2|= 0 &\Leftrightarrow& f \text{ is area preserving}.
\end{eqnarray*}

\subsection{The K{\"a}hler angles}
There are two natural complex structures associated to the product space $(M\times N,\gk)$, namely
$$J_1:=\pi^*_MJ_M-\pi^*_NJ_N\quad\text{and}\quad J_2:=\pi^*_MJ_M+\pi^*_NJ_N,$$
where $J_M, J_N$ are the complex structures on $M$ and $N$ defined by 
$$\Omega_M(\cdot\,,\cdot)=\gm(J_M\,\cdot\,,\cdot),\quad\Omega_N(\cdot\,,\cdot)=\gn(J_N\,\cdot\,,\cdot).$$
Chern and Wolfson \cite{chern} introduced a function which measures the deviation of the
tangent plane $\dF(T_xM)$ from a complex line of the space $T_{F(x)}(M\times N)$. More precisely, if
we consider $(M\times N,\gk)$ as a complex manifold with respect to $J_1$ then its corresponding
{\it K{\"a}hler angle} $a_1$ is given by the formula
$$\cos a_1=\varphi:=\gk\big(J_1\dF(v_1),\dF(v_2)\big)=u_1-u_2.$$
For our convenience we require that $a_1\in[0,\pi]$. Note that in general $a_1$
is not smooth at points where $\varphi=\pm 1.$
If there exists a point $x\in M$ such that $a_1(x)=0$ then $\dF(T_xM)$ is a complex line of $T_{F(x)}(M\times N)$ and
$x$ is called a {\it complex point} of $F$. If $a_1(x)=\pi$
then $\dF(T_xM)$ is an anti-complex line of $T_{F(x)}(M\times N)$ and $x$ is said {\it anti-complex point} of $F$. In the case
where $a_1(x)=\pi/2$, the point $x$ is called {\it Lagrangian point} of the map $F$. In this case $u_1=u_2$.
Similarly, if we regard the product manifold $(M\times N,\gk)$ as a K{\"a}hler manifold with respect to the complex structure $J_2$,
then its corresponding K{\"a}hler angle $a_2$ is defined by the formula
$$\cos a_2=\vartheta:=\gk\big(J_2\dF(v_1),\dF(v_2)\big)=u_1+u_2.$$

The graph $\Gamma(f)$ in the product K{\"a}hler manifold $(M\times N,\gk,J_i)$ is called
{\it symplectic} with respect to the K\"ahler form related to $J_i$, if the corresponding K{\"a}hler angle satisfies $\cos a_i>0$. Therefore a map $f$ is strictly area decreasing if and
only if its graph $\Gamma(f)$ is symplectic with respect to both K\"ahler forms. There are many 
interesting results on
symplectic mean curvature flow of surfaces in $4$-dimensional manifolds in the literature
(see for example the papers \cite{symp6,symp1,symp2,symp3,symp4,symp5}).

\subsection{Structure equations}
Around each point $x\in \Gamma(f)$ we choose an adapted local orthonormal 
frame $\{e_1,e_2;e_3,e_4\}$
such that $\{e_1,e_2\}$ is tangent and $\{e_3,e_4\}$ is normal to the graph. 
The components of $A$ are denoted as
$A^{\alpha}_{ij}:=\langle A(e_i,e_j),e_{\alpha}\rangle.$
Latin indices take values $1$ and $2$ while Greek indices take the values $3$ and $4$. For instance we write
the mean curvature vector in the form
$H=H^3e_3+H^4e_4. $
By \textit{Gau\ss' equation} we get
\begin{equation*}\label{Gausscurv}
2\sigma_{\gind}=2u^2_1\sigma_M+2u_{2}^2\sigma_N+\|H\|^2-\|A\|^2.
\end{equation*}
From the {\it Ricci equation} we see that the curvature $\sigma_n$ of the 
normal bundle of $\Gamma(f)$ is given
by the formula
\begin{equation*}
\sigma_n:=\rind^{\perp}_{1234}
=\rk_{1234}
+A^3_{11}A^4_{12}-A^3_{12}A^{4}_{11}+A^3_{12}A^4_{22}-A^3_{22}A^4_{12}.
\end{equation*}
The sum of the last four terms in the above formula is equal to minus the 
commutator $\sigma^{\perp}$ of the matrices $A^3=(A^3_{ij})$ and $A^4=(A^4_{ij})$, i.e.,
\begin{equation}\label{sigmaperp}
\sigma^{\perp}:=\langle [A^3,A^4]e_1,e_2\rangle=-A^3_{11}A^4_{12}+A^3_{12}A^{4}_{11}-A^3_{12}A^4_{22}+A^3_{22}A^4_{12}.\end{equation}
\section{A priori estimates for the Jacobians}
Let $M$ and $N$ be Riemann surfaces, $f:M\to N$ a smooth map and let $F:M \to M\times N$,
$F := \Id \times f,$ be the parametrization of the graph $\Gamma(f)$. Consider the family of immersions $F:M\times [0, T )\to M\times N$ satisfying
the mean curvature flow
\begin{equation*}
\left\{
\begin{array}{ll}
\dF_{(x,t)}(\partial_t)=H(x,t), &\\
F(x,0)=F(x),  &
\end{array}
\right.
\end{equation*}
where $(x,t)\in M\times [0, T )$, $H(x,t)$ is the mean curvature vector field at $x\in M$ of the immersion $F_t:M\to M\times N$
given by $F_t(\cdot):=F(\cdot,t)$
and $T$ is the maximal time of existence of the solution.
The compactness of $M$ implies that the evolving submanifolds stay graphs on an interval $[0,T_g)$ with $T_g\le T$.
This means that there exist smooth families of diffeomorphisms $\phi_t\in\operatorname{Diff}(M)$ and maps $f_t:M\to N$ such that
$$F_t\circ \phi_t = \Id\times f_t,$$
for any time $t\in [0,T_g)$.

\subsection{Evolution equations of first order quantities}


In the next lemma we recall the evolution equation of a parallel $2$-form on
the product manifold $M\times N$. The proofs can be found in \cite{wang1}.




\begin{lemma}\label{parallel}
Let $\Omega$ be a parallel $2$-form on the product manifold $M\times N$. Then, the function $u:=*(F^{\ast}\Omega)$ evolves in time under the equation
\begin{eqnarray*}
\partial_{t}u=\Delta u+\|A\|^2u-2\sum_{\alpha,\beta, k}A^{\alpha}_{ki}A^{\beta}_{kj}\Omega_{\alpha\beta}
+\sum_{\alpha}\big(\rk_{212\alpha}\Omega_{\alpha 2}+\rk_{121\alpha}\Omega_{1\alpha}\big)
\end{eqnarray*}
where  $\{e_1,e_2;e_3,e_4\}$ is an arbitrary adapted local orthonormal frame.
\end{lemma}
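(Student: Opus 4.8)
The plan is to derive the evolution equation for $u=\ast(F^*\Omega)$ directly from the mean curvature flow, following the standard approach of Wang. First I would fix a point $(x_0,t_0)$ in space-time and choose the adapted orthonormal frame $\{e_1,e_2;e_3,e_4\}$ to be \emph{normal} at that point, i.e.\ obtained by parallel transport along geodesics emanating from $x_0$ so that $\nabla_{e_i}e_j=0$ and $\nabla^\perp_{e_i}e_\alpha=0$ at $(x_0,t_0)$. Writing $u=F^*\Omega(e_1,e_2)=\Omega(\dF(e_1),\dF(e_2))$, I would differentiate in time. Since $\Omega$ is parallel on $M\times N$, the only contributions come from differentiating the frame vectors $\dF(e_i)$; using $\partial_t F=H$ and the standard identity $\tilde\nabla_{\partial_t}\dF(e_i)=\nabla^F_{e_i}H$, together with Weingarten's formula $\nabla^F_{e_i}H=-\sum_{\alpha}H^\alpha A^\alpha(e_i,\cdot)^\sharp+\nabla^\perp_{e_i}H$, one gets the time derivative in terms of $H$, $A$ and the normal derivatives of $H$.

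Next I would compute the Laplacian $\Delta u$ on the evolving graph. This is the Bochner-type part of the calculation: one takes the second covariant derivative $\nabla_{e_k}\nabla_{e_k}u$ of $u=\Omega(\dF(e_1),\dF(e_2))$, again using that $\Omega$ is parallel so that all derivatives fall on the frame, and invokes the Gau\ss\ and Codazzi equations to convert second derivatives of $\dF$ into curvature terms of the ambient space plus quadratic expressions in $A$. The Codazzi equation is what produces the ambient curvature terms $\rk_{212\alpha}\Omega_{\alpha 2}+\rk_{121\alpha}\Omega_{1\alpha}$, while the Gau\ss\ equation and the Weingarten relations produce the $\|A\|^2u$ term and the quadratic term $-2\sum A^\alpha_{ki}A^\beta_{kj}\Omega_{\alpha\beta}$. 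The crucial cancellation is between the $\nabla^\perp_{e_i}\nabla^\perp_{e_j}H$-type terms coming from $\Delta u$ and the reaction terms coming from $\partial_t u$; this is exactly the mechanism that makes $u$ satisfy a nice parabolic equation rather than merely an inequality, and it is the point where one must be careful with index bookkeeping and signs.

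The main obstacle I anticipate is keeping track of the algebra when expanding $\Delta u$: because $\Omega$ is a $2$-form rather than a function, its pullback involves a wedge of two tangent vectors, and each application of a covariant derivative splits $\dF(e_j)$ into a tangential part (governed by $A$, which is normal-valued) and forces one to re-expand $\Omega$ evaluated on mixed tangent/normal pairs. One has to organize these into the symmetric combination $A^\alpha_{ki}A^\beta_{kj}\Omega_{\alpha\beta}$ and verify that the antisymmetric piece of $\Omega$ over the normal indices is what survives. Once the bookkeeping is under control, matching terms between $\partial_t u$ and $\Delta u$ is routine. Since the statement explicitly cites \cite{wang1} for the proof, I would present the computation concisely, emphasizing the normal-frame trick and the role of the Gau\ss--Codazzi equations, and refer the reader to \cite{wang1} for the full index-level details.

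\begin{proof}
We only sketch the computation, referring to \cite{wang1} for complete details. Fix $(x_0,t_0)$ and choose an adapted orthonormal frame $\{e_1,e_2;e_3,e_4\}$ that is normal at this point, so that $\nabla_{e_i}e_j=0$ and $\nabla^\perp_{e_i}e_\alpha=0$ there. Since $\Omega$ is parallel, writing $u=\Omega\big(\dF(e_1),\dF(e_2)\big)$ and using $\partial_t F=H$ together with $\tilde\nabla_{\partial_t}\dF(e_i)=\nabla^F_{e_i}H$ and the Weingarten formula, a direct differentiation yields $\partial_t u$ as a sum of a term linear in the normal Hessian of $H$ and a reaction term quadratic in $A$. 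On the other hand, computing $\Delta u=\sum_k\nabla_{e_k}\nabla_{e_k}u$ and letting the derivatives act on the frame vectors, the tangential parts of $\nabla^F_{e_k}\dF(e_j)$ are governed by $A$ (normal-valued) and the Codazzi equation converts the resulting second derivatives into the ambient curvature terms $\sum_\alpha\big(\rk_{212\alpha}\Omega_{\alpha2}+\rk_{121\alpha}\Omega_{1\alpha}\big)$, while the Gau\ss\ equation and the Weingarten relations produce $\|A\|^2u$ and $-2\sum_{\alpha,\beta,k}A^{\alpha}_{ki}A^{\beta}_{kj}\Omega_{\alpha\beta}$. The normal-Hessian contributions to $\partial_t u$ and to $\Delta u$ cancel, and combining the remaining terms gives
\begin{eqnarray*}
\partial_{t}u=\Delta u+\|A\|^2u-2\sum_{\alpha,\beta, k}A^{\alpha}_{ki}A^{\beta}_{kj}\Omega_{\alpha\beta}
+\sum_{\alpha}\big(\rk_{212\alpha}\Omega_{\alpha 2}+\rk_{121\alpha}\Omega_{1\alpha}\big).
\end{eqnarray*}
Since both sides are frame-independent, the formula holds for an arbitrary adapted local orthonormal frame.
\end{proof}
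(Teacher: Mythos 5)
Your proposal matches the paper's approach exactly: the paper gives no proof of its own, simply citing \cite{wang1}, and your sketch correctly identifies the key ingredients of Wang's computation (normal frame at a point, parallelism of $\Omega$, the Codazzi equation producing the ambient curvature terms, and the cancellation of the normal derivatives of $H$ between the two sides). One small imprecision worth correcting if you ever write out the details: the terms that cancel between $\partial_t u$ and $\Delta u$ are the first-order normal derivatives $\nabla^\perp H$ (there is no ``normal Hessian of $H$'' in either quantity), and a careful derivation must also account for the evolution of the volume element, $\partial_t\sqrt{\det\gind}=-\|H\|^2\sqrt{\det\gind}$, which supplies a $\|H\|^2u$ term that cancels the $-\|H\|^2u$ coming from the Weingarten part of $\nabla^F_{e_i}H$, leaving the stated $+\|A\|^2u$ reaction coefficient.
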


As a consequence of Lemma \ref{parallel}
we deduce the following:

\begin{lemma}\label{jacobians}
The functions $u_1$ and $u_2$ defined in section \ref{sec 2.4} satisfy the following coupled system of parabolic equations
\begin{eqnarray*}
\partial_{t}u_1-\Delta u_1\hspace{-5pt}&=&\hspace{-5pt}\|A\|^2u_1+2\sigma^{\perp} u_2+\sigma_M(1-u^2_1-u^2_2)u_1-2\sigma_Nu_1u^2_2,\\
\partial_{t}u_2-\Delta u_2\hspace{-5pt}&=&\hspace{-5pt}\|A\|^2u_2+2\sigma^{\perp} u_1+\sigma_N(1-u^2_1-u^2_2)u_2-2\sigma_Mu^2_1u_2.
\end{eqnarray*}
Moreover, $\varphi$ and $\vartheta$ satisfy the following system of equations
 \begin{eqnarray*}
\dt\varphi-\Delta\varphi\hspace{-6pt}&=&\hspace{-8pt}\big\{\|A\|
^2-2\sigma^{\perp}\big\}\varphi+\tfrac{1}
{2}\big\{\sigma_M(\varphi+\vartheta)+\sigma_N(\varphi-\vartheta)\big\}
(1-\varphi^2),\\
\dt\vartheta-\Delta\vartheta\hspace{-6pt}&=&\hspace{-8pt}\big\{\|A\|^2+2\sigma^{\perp}\big\}\vartheta+\tfrac{1}{2}\big\{\sigma_M(\varphi+\vartheta)-\sigma_N(\varphi-\vartheta)\big\}(1-\vartheta^2).
\end{eqnarray*}
In particular, if all the maps $f_t$ are area preserving, then the curvatures $\sigma_M$ and $\sigma_N$ necessarily must satisfy the relation $\sigma_M=\sigma_N\circ f_t$ for any $t\in[0,T_g)$.
\end{lemma}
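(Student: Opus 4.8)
The plan is to prove the first two evolution systems via Lemma~\ref{parallel}, and then extract the ``area preserving'' rigidity as a corollary of the first system. Starting from the parallel forms $\Omega_1,\Omega_2$ introduced in section~\ref{sec 2.4}, I would apply Lemma~\ref{parallel} to each of them, and then evaluate all the terms using the adapted orthonormal frame $\{e_1,e_2;e_3,e_4\}$ coming from the singular decomposition in section~\ref{singular}. In that frame one has $(\Omega_1)_{12}=u_1$, $(\Omega_1)_{34}=u_1$ and all other components vanishing (because $\Omega_1=\pi_M^*\Omega_M$ is the pullback of the area form of $M$ and $\{e_1,e_3\}$ project to a multiple of $\alpha_1$ while $\{e_2,e_4\}$ project to a multiple of $\alpha_2$), and similarly $(\Omega_2)_{12}=u_2$, $(\Omega_2)_{34}=-u_2$; here the signs are exactly what makes $\varphi=u_1-u_2$ and $\vartheta=u_1+u_2$ correspond to the two complex structures $J_1$, $J_2$. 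Plugging $\Omega_{12}=u_i$, $\Omega_{34}=\pm u_i$ into the quadratic term $-2\sum_{\alpha,\beta,k}A^\alpha_{ki}A^\beta_{kj}\Omega_{\alpha\beta}$ collapses it (after antisymmetrizing in $i,j$) to $\pm 2u_i$ times the commutator expression in \eqref{sigmaperp}, i.e.\ to $\mp 2\sigma^\perp u_i$ with the sign tied to the sign of $\Omega_{34}$; that is where the $2\sigma^\perp u_2$ and $2\sigma^\perp u_1$ cross-terms come from.

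Next I would handle the ambient curvature term $\sum_\alpha(\rk_{212\alpha}\Omega_{\alpha2}+\rk_{121\alpha}\Omega_{1\alpha})$. Using $\rk=\pi_M^*\operatorname{R}_M\oplus\pi_N^*\rn$ and the explicit form of $e_1,\dots,e_4$ in terms of $\alpha_1,\alpha_2,\beta_1,\beta_2$ and the singular values $\lambda,\mu$, each $\rk_{ijk\alpha}$ becomes a combination of $\sigma_M$ and $\sigma_N$ weighted by powers of $\lambda^2/(1+\lambda^2)$ and $\mu^2/(1+\mu^2)$; a short bookkeeping computation, together with the identities $u_1^2=1/\{(1+\lambda^2)(1+\mu^2)\}$ and $u_2^2=\lambda^2\mu^2/\{(1+\lambda^2)(1+\mu^2)\}$, rewrites this curvature contribution as $\sigma_M(1-u_1^2-u_2^2)u_i$ plus a $\sigma_N$-term, matching the stated right-hand sides. (This is essentially the computation already carried out in \cite{savas2,savas1}, so I would cite it and only indicate the shape of the terms.) Adding $\|A\|^2 u_i$ from Lemma~\ref{parallel} gives the two equations for $u_1$ and $u_2$. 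The equations for $\varphi$ and $\vartheta$ then follow by purely algebraic manipulation: $\varphi=u_1-u_2$ and $\vartheta=u_1+u_2$, so one subtracts and adds the two $u_i$-equations and regroups, using $1-u_1^2-u_2^2=\tfrac12\{(1-\varphi^2)+(1-\vartheta^2)\}$ and $2u_1u_2^2-2u_1^2u_2=\dots$ to collect the curvature terms into the displayed combinations $\tfrac12\{\sigma_M(\varphi+\vartheta)\pm\sigma_N(\varphi-\vartheta)\}(1-\varphi^2)$ or $(1-\vartheta^2)$; the $\|A\|^2\mp2\sigma^\perp$ factors drop out immediately from the cross-terms.

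Finally, for the rigidity statement: if every $f_t$ is area preserving then by section~\ref{sec 2.4} we have $u_1-|u_2|\equiv 0$ along the flow, hence $\varphi\equiv 0$ or $\vartheta\equiv 0$ identically in space and time (depending on orientation). Feeding $\varphi\equiv0$ into its evolution equation kills the left-hand side and the $\{\|A\|^2-2\sigma^\perp\}\varphi$ term, leaving $0=\tfrac12\{\sigma_M\vartheta-\sigma_N\vartheta\}$ (using $\varphi=0$), i.e.\ $(\sigma_M-\sigma_N\circ f_t)\vartheta=0$; since area preserving means $\operatorname{Jac}(f_t)=\pm1$, i.e.\ $|u_2|=u_1>0$, we get $\vartheta=u_1+u_2\ne0$, so $\sigma_M=\sigma_N\circ f_t$; the orientation-reversing case is identical with the roles of $\varphi$ and $\vartheta$ swapped. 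I expect the main obstacle to be the curvature bookkeeping in the second paragraph — correctly tracking all the $\lambda,\mu$ weights and signs so that the ambient $\rk$ terms assemble into exactly $\sigma_M(1-u_1^2-u_2^2)u_i-2\sigma_N u_i u_2^2$ and its counterpart — rather than anything conceptually deep; everything else is algebra or a direct appeal to Lemma~\ref{parallel}.
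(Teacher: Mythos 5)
Your high‑level plan coincides with the paper's: the paper simply asserts that the evolution equations for $u_1,u_2$ ``follow as an immediate consequence of Lemma~\ref{parallel}'' and then derives the rigidity statement; you try to fill in the computation behind that assertion. The algebra you use to pass from the $u_i$ equations to the $\varphi,\vartheta$ equations, and your treatment of the area‑preserving case (considering $\varphi\equiv 0$ or $\vartheta\equiv 0$ according to orientation and plugging into the corresponding equation to force $\sigma_M=\sigma_N\circ f_t$), are correct and in fact slightly more careful than the paper, which only writes down the orientation‑preserving case. However, the core computational step you propose is wrong.

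The error is in your table of components of $\Omega_1,\Omega_2$ in the adapted frame. Writing out $e_1,\dots,e_4$ from the singular decomposition, one finds
$(\Omega_1)_{12}=u_1$, $(\Omega_1)_{34}=\lambda\mu/\sqrt{(1+\lambda^2)(1+\mu^2)}=|u_2|$, $(\Omega_1)_{13}=(\Omega_1)_{24}=0$, but $(\Omega_1)_{14}=-\mu u_1\neq 0$ and $(\Omega_1)_{23}=\lambda u_1\neq 0$; and similarly $(\Omega_2)_{12}=u_2$, $(\Omega_2)_{34}=\pm u_1$, with $(\Omega_2)_{14}$ and $(\Omega_2)_{23}$ nonzero. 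Your parenthetical argument (``$e_1,e_3$ project to multiples of $\alpha_1$, $e_2,e_4$ to multiples of $\alpha_2$'') shows only that $(\Omega_1)_{13}=(\Omega_1)_{24}=0$; it does not support ``all other components vanishing,'' since $e_1$ and $e_4$ project to multiples of $\alpha_1$ and $\alpha_2$ respectively, which is exactly a pair that $\Omega_M$ pairs nontrivially. This matters twice over. First, the curvature term in Lemma~\ref{parallel} reads
$\sum_\alpha\bigl(\rk_{212\alpha}\Omega_{\alpha 2}+\rk_{121\alpha}\Omega_{1\alpha}\bigr)$,
and only the mixed tangent--normal components $\Omega_{13},\Omega_{14},\Omega_{23},\Omega_{24}$ enter it; with your claimed vanishing the entire curvature contribution would be zero, which cannot reproduce the terms $\sigma_M(1-u_1^2-u_2^2)u_i-2\sigma_N u_i u_2^2$ in the statement. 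Second, your claimed values $(\Omega_1)_{34}=u_1$ and $(\Omega_2)_{34}=-u_2$ are also wrong (the correct ones are $|u_2|$ and $\pm u_1$), and the quadratic term $-2\sum_{\alpha,\beta,k}A^\alpha_{k1}A^\beta_{k2}\Omega_{\alpha\beta}=2\sigma^\perp\Omega_{34}$ would then give $2\sigma^\perp u_1$ in the $u_1$‑equation and $-2\sigma^\perp u_2$ in the $u_2$‑equation, contradicting the cross terms $2\sigma^\perp u_2$ and $2\sigma^\perp u_1$ you are trying to obtain. To repair the proof you need to keep all four mixed components $(\Omega_i)_{14},(\Omega_i)_{23}$, express $\rk_{2123},\rk_{2124},\rk_{1213},\rk_{1214}$ through $\sigma_M,\sigma_N$ and the $\lambda,\mu$ weights, and recombine; only then do the $\sigma_M$-- and $\sigma_N$--terms in the statement emerge.
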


\begin{proof}
The evolution equations of the functions $u_1$ and $u_2$ follow as an immediate consequence of Lemma \ref{parallel}.
Suppose now that each $f_t$ is an area preserving map. Then $\varphi=u_1-u_2=0$
in space and time. Combining the two
equations from above, we deduce that the curvatures of $M$ and $N$ are related by $\sigma_M=\sigma_N\circ f_t$,
and so $f_t$, $t\in[0,T_g)$, are even curvature preserving maps. This completes the proof of lemma.
\end{proof}

\subsection{Estimating the Jacobians.} We will give here several a priori estimates for the functions $u_1$,
$u_2$ and the K{\"a}hler angles.

\begin{lemma}
Let $f:M\to N$ be a smooth map between two complete Riemann surfaces, $M$ being compact. Then
the mean curvature flow of $\Gamma(f)$ stays graphical as long as it exists and the function
$u_2/u_1$ stays bounded.
\end{lemma}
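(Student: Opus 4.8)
The plan is to reduce everything to a lower bound for the Jacobian $u_1$ of the projection $\pi_M\circ F_t$ that is uniform on every compact subinterval of $[0,T)$, and to extract this bound from the evolution equations of Lemma~\ref{jacobians} by a maximum-principle argument.

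First I would set up the reduction. The immersion $F_t$ is a graph over $M$ exactly when $u_1=*(F_t^{*}\Omega_1)>0$, and since $|u_2|\le 1$ holds pointwise, once $u_1>0$ the function $u_2/u_1$ is bounded if and only if $u_1$ is bounded away from $0$. Compactness of $M$ and smoothness of $f$ give $u_1(\cdot,0)\ge\delta_0>0$. Let $[0,T_g)\subseteq[0,T)$ be the maximal interval on which the flow stays graphical (an open condition containing $0$). It then suffices to prove that for every $t_1<T$ there is $\delta(t_1)>0$ with $u_1\ge\delta(t_1)$ on $M\times[0,\min\{t_1,T_g\})$: a lower bound for $u_1$ at a time $<T$ lets one continue the flow graphically past that time, so this forces $T_g=T$, and it simultaneously bounds $u_2/u_1$ on $[0,t_1]$.

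Next I would study $q:=u_2/u_1$ on $[0,T_g)$. Differentiating and inserting the two equations of Lemma~\ref{jacobians}, the terms proportional to $\|A\|^2$ cancel and one is left with
$$\partial_tq=\Delta q+\tfrac{2}{u_1}\langle\nabla u_1,\nabla q\rangle+2\sigma^{\perp}(1-q^2)+q\big[(\sigma_N-\sigma_M)(1-u_1^2-u_2^2)-2\sigma_Mu_1^2+2\sigma_Nu_2^2\big].$$
On $M\times[0,t_1]$ with $t_1<T$ the second fundamental form is bounded, so $|\sigma^{\perp}|\le\tfrac12\|A\|^2$ is bounded (the commutator estimate for the symmetric $2\times2$ matrices $A^3,A^4$ in the definition of $\sigma^{\perp}$), and the bracket is bounded in terms of $\sup|\sigma_M|,\sup|\sigma_N|$ because $|u_1|,|u_2|\le1$. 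Since the transport term vanishes at a spatial extremum, the scalar maximum principle controls $\sup_M|q|$ on $[0,\min\{t_1,T_g\})$ in terms of $\delta_0^{-1}$, the curvature bounds and the bound on $\|A\|$; I would organise this as a short time-stepping estimate so that the resulting bound $Q_1=Q_1(t_1)$ does not degenerate. With $|u_2|\le Q_1u_1$ in hand, the $u_1$-equation of Lemma~\ref{jacobians} gives
$$\partial_tu_1-\Delta u_1=\|A\|^2u_1+2\sigma^{\perp}u_2+\sigma_M(1-u_1^2-u_2^2)u_1-2\sigma_Nu_1u_2^2\ge-\big(\|A\|^2Q_1+C\big)u_1\ge-L\,u_1,$$
with $L=L(t_1)$ a constant, so by the maximum principle $u_1\ge\delta_0e^{-Lt}\ge\delta_0e^{-Lt_1}=:\delta(t_1)>0$ on $M\times[0,\min\{t_1,T_g\})$. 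As this bound does not deteriorate as $t$ approaches $\min\{t_1,T_g\}$, the flow stays graphical up to that time, hence $\min\{t_1,T_g\}=t_1$; letting $t_1\uparrow T$ yields $T_g=T$ and $|u_2/u_1|\le\delta(t_1)^{-1}$ on $[0,t_1]$.

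The hard part is the control of $q=u_2/u_1$ in the middle step. The reaction term $2\sigma^{\perp}(1-q^2)$ carries, a priori, the full $\|A\|^2$, so a naive comparison ODE for $\sup_M q$ is superlinear and might blow up in finite time. One has to use simultaneously the cancellation of $\|A\|^2$ already present in the equation for $q$, the boundedness of $\|A\|$ on intervals strictly inside $[0,T)$, and the sign structure $\|A\|^2\pm2\sigma^{\perp}\ge0$, and then close the estimate by iterating over short time-steps whose total length exhausts $[0,t_1]$. Everything else is a routine application of the scalar maximum principle.
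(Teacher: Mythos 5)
Your proposal rests on a misreading of the lemma. The statement is conditional: the flow stays graphical \emph{as long as} it exists \emph{and} $u_2/u_1$ stays bounded; boundedness of $u_2/u_1$ is a hypothesis, not a second conclusion. The paper's own proof reflects this: with $|u_2/u_1|\le Q$ granted, one rewrites the first equation of Lemma~\ref{jacobians} as
$$\partial_t u_1-\Delta u_1=\Bigl[\|A\|^2+2\sigma^{\perp}\tfrac{u_2}{u_1}+\sigma_M\bigl(1-u_1^2-u_2^2\bigr)-2\sigma_N u_2^2\Bigr]u_1\ge h\,u_1,$$
where $h$ is bounded on any compact subinterval $[0,t_1]\subset[0,T)$ because there $\|A\|$ is bounded, $|\sigma^\perp|\le\frac12\|A\|^2$, $|u_1|,|u_2|\le1$, and $\sigma_N$ is bounded along the image of the flow (a compact subset of $N$). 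The maximum principle then gives $u_1\ge\bigl(\min_M u_1(\cdot,0)\bigr)e^{\int_0^t\min h}>0$, so the flow cannot cease to be graphical before $T$. That is the whole argument; no control of $q=u_2/u_1$ needs to be proved.

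By attempting to prove the unconditional version you run into a genuine gap, which you yourself flag. In your evolution equation for $q$, the reaction term $2\sigma^{\perp}(1-q^2)$ is superlinear in $q$, with a coefficient that is only bounded by $\frac12\|A\|^2$. The resulting comparison ODE $\dot{y}\le K(1+y^2)+Cy$ has solutions blowing up in finite time, and the inequality $\|A\|^2\pm2\sigma^\perp\ge0$ only pins down the sign of the coefficient of $q^2$ (not its size), so it does not save the estimate. Nor does iterating over short time-steps: the upper bound for $q$ obtained on one step is the initial datum for the next, so the iterates still obey the same superlinear recursion and can escape to infinity before $t_1$ is reached. There is also a smaller slip early on: you assert that once $u_1>0$, boundedness of $u_2/u_1$ is \emph{equivalent} to $u_1$ being bounded away from zero; only one implication is true, since $u_1$ and $u_2$ could tend to zero at the same rate. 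In short, the lemma you should be proving is the conditional one, and its proof is the one-line maximum-principle argument above; your attempt at the stronger unconditional statement is not closed and, without the area-decreasing hypothesis of the rest of the paper, there is no reason to expect it to be true.
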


\begin{proof}
From the first equation of Lemma \ref{jacobians} we deduce that there exists a time dependent and bounded function $h$ such that
$$\dt u_1-\Delta u_1\ge h\,u_1.$$
Then from the parabolic maximum principle we
get that $u_{1}(x,t) >0,$
for any $(x,t)\in M\times [0,T).$ Therefore, the solution remains graphical as long as the flow
exists.
\end{proof}

\begin{lemma}\label{l}
Let $f:(M,\gm)\to(N,\gn)$ be an area decreasing map. Suppose that the curvatures of the surfaces $(M,\gm)$ and $(N,\gn)$ satisfy
$\sigma:=\min\sigma_M\ge\sup\sigma_N.$
Then the following statements hold.
\begin{enumerate}[\rm(a)]
\item
The conditions $\operatorname{Jac}(f)\le 1$ or $\operatorname{Jac}(f)\ge -1$ are both preserved
under the mean curvature flow.
\medskip
\item
The area decreasing property is preserved under the flow.
\medskip
\item
If there is a point $(x_0,t_0)\in M\times (0,T_g)$
where $\operatorname{Jac}^2(f)=1$, then $\operatorname{Jac}^2(f)\equiv 1$ in space and time and $\sigma_M=\sigma=\sigma_N$.
\end{enumerate}
\end{lemma}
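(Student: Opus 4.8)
The plan is to work entirely with the functions $\varphi = u_1 - u_2$ and $\vartheta = u_1 + u_2$ and their coupled evolution equations from Lemma \ref{jacobians}, exploiting the structure of the zeroth-order terms together with the curvature hypothesis $\sigma_M \ge \sigma \ge \sigma_N$. First I would observe that $\operatorname{Jac}(f) \le 1$ is equivalent to $\varphi = u_1 - u_2 \ge 0$ (using $u_1 > 0$ from the previous lemma), and $\operatorname{Jac}(f) \ge -1$ is equivalent to $\vartheta = u_1 + u_2 \ge 0$; the area decreasing property is the conjunction of both. So the entire lemma reduces to showing that $\{\varphi \ge 0\}$ and $\{\vartheta \ge 0\}$ are each preserved, and that if $\varphi$ or $\vartheta$ touches zero at an interior time then it vanishes identically with the curvature rigidity $\sigma_M = \sigma = \sigma_N$.

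For part (a), consider the equation for $\varphi$. At a point where $\varphi = 0$ one has $\vartheta = 2u_1 > 0$, and the zeroth-order term on the right becomes $\tfrac12\{\sigma_M \vartheta - \sigma_N \vartheta\}(1-\varphi^2) = \tfrac12(\sigma_M - \sigma_N)\vartheta \ge 0$ by the curvature assumption. More robustly, I would rewrite the $\varphi$-equation as $\dt \varphi - \Delta\varphi = c_1 \varphi$ for a bounded (in space and time, on $[0,T')$ for $T' < T_g$) coefficient $c_1$ after extracting the manifestly $\varphi$-divisible piece $\tfrac12(\sigma_M+\sigma_N)\varphi(1-\varphi^2)$, and then bounding the remaining term $\tfrac12(\sigma_M - \sigma_N)\vartheta(1-\varphi^2)$ from below by $0$; this makes the avoidance-set version of the parabolic maximum principle directly applicable on a compact $M$ (bounded geometry of $N$ and compactness of $M$ give the needed uniform bounds on curvatures and on $\|A\|^2$ over finite time intervals). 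Hence $\varphi \ge 0$ is preserved. The argument for $\vartheta \ge 0$ is symmetric: extract $\tfrac12(\sigma_M - \sigma_N)\vartheta(1-\vartheta^2)$ as the $\vartheta$-divisible part and note the leftover $\tfrac12(\sigma_M + \sigma_N)\varphi(1-\vartheta^2)$ has the sign of $\varphi \ge 0$ (already established), so again $\dt\vartheta - \Delta\vartheta \ge c_2\vartheta$ with $c_2$ bounded. Part (b) is then just the conjunction of these two preserved conditions.

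For part (c), suppose $\operatorname{Jac}^2(f)(x_0,t_0) = 1$ at some interior $(x_0,t_0)$; then either $\varphi$ or $\vartheta$ vanishes there. Say $\varphi(x_0,t_0) = 0$. Since $\varphi \ge 0$ and $\varphi$ satisfies $\dt\varphi - \Delta\varphi \ge c_1 \varphi$ with $c_1$ bounded, the strong maximum principle forces $\varphi \equiv 0$ on $M \times [0,t_0]$, and then a forward uniqueness / continuation argument (or reapplying the strong maximum principle forward in time) gives $\varphi \equiv 0$ on all of $M \times [0,T_g)$. Plugging $\varphi \equiv 0$ back into the $\varphi$-evolution equation and using $\vartheta = 2u_1 > 0$ yields $(\sigma_M - \sigma_N)\vartheta = 0$, hence $\sigma_M = \sigma_N \circ f_t$ pointwise; combined with $\sigma_M \ge \sigma \ge \sigma_N$ this forces $\sigma_M \equiv \sigma \equiv \sigma_N$, as claimed (this is essentially the rigidity already recorded at the end of Lemma \ref{jacobians}). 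The case $\vartheta(x_0,t_0)=0$ is handled identically using the $\vartheta$-equation and the already-known sign $\varphi \ge 0$.

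The main obstacle I anticipate is a careful application of the maximum principle: one must ensure the coefficients $c_1, c_2$ obtained after extracting the divisible terms are genuinely bounded on each compact time subinterval — this needs $\|A\|^2$ to be bounded on $[0,T']$, which follows from short-time existence and the compactness of $M$, but should be stated — and one must handle the non-smoothness of the Kähler angles $a_i$ where $\varphi$ or $\vartheta$ equals $\pm 1$ by working with $\varphi,\vartheta$ themselves rather than the angles. The strong maximum principle step in (c) also requires that $\varphi$ (resp. $\vartheta$) be $C^2$ where it vanishes, which is fine since $u_1, u_2$ are smooth; it is only the arccosine that is singular, and we never use it.
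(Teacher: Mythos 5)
Your overall strategy --- reduce the lemma to the preservation of $\varphi\ge 0$ and $\vartheta\ge 0$, use the evolution equations of Lemma~\ref{jacobians}, split off the $\varphi$- resp.\ $\vartheta$-divisible zeroth-order piece, and invoke the (strong) parabolic maximum principle --- is exactly the approach taken in the paper, and parts (b) and (c) are handled the same way. However, there are two issues in your ``more robust'' formulation of part (a) that the paper's own choice of decomposition sidesteps.

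First, the circularity. You split the zeroth-order term in the $\varphi$-equation as
$$\tfrac12(\sigma_M+\sigma_N)\varphi(1-\varphi^2)+\tfrac12(\sigma_M-\sigma_N)\,\vartheta\,(1-\varphi^2),$$
and bound the second summand from below by $0$. That requires $\vartheta\ge 0$ throughout the space-time slab, which is precisely one of the two inequalities you are trying to propagate; conversely, your $\vartheta$-argument uses $\varphi\ge 0$. As literally written the two inequalities lean on each other. Your earlier ``first-touch'' observation ($\vartheta=2u_1>0$ wherever $\varphi=0$) is the right non-circular mechanism, and the avoidance/first-touching-time principle you allude to does close the loop, but the ``robust'' version should either be recast in those terms or replaced. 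The paper avoids the issue entirely by writing the zeroth-order term as
$$\sigma_N\varphi(1-\varphi^2)+\tfrac12(\sigma_M-\sigma_N)(\varphi+\vartheta)(1-\varphi^2),$$
where $\varphi+\vartheta=2u_1>0$ is unconditionally positive (it is the graphicality of the flow, already established in the preceding lemma), so that $\dt\varphi-\Delta\varphi\ge h\,\varphi$ with bounded $h$ holds with no hypothesis on the sign of $\vartheta$; the $\varphi$- and $\vartheta$-arguments are genuinely decoupled.

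Second, a sign error in your $\vartheta$-decomposition. The zeroth-order term in the $\vartheta$-equation is $\tfrac12\{(\sigma_M+\sigma_N)\vartheta+(\sigma_M-\sigma_N)\varphi\}(1-\vartheta^2)$; the $\vartheta$-divisible piece is $\tfrac12(\sigma_M+\sigma_N)\vartheta(1-\vartheta^2)$ and the leftover is $\tfrac12(\sigma_M-\sigma_N)\varphi(1-\vartheta^2)$, not $\tfrac12(\sigma_M+\sigma_N)\varphi(1-\vartheta^2)$. Only with $\sigma_M-\sigma_N$ (which is $\ge 0$ by hypothesis) does the leftover acquire a definite sign; $\sigma_M+\sigma_N$ has none. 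The paper's decomposition of the $\vartheta$-equation, parallel to the $\varphi$-case, again carries the factor $(\varphi+\vartheta)$ and avoids both problems at once. With these two corrections, your part~(c) argument (strong maximum principle forcing $\varphi\equiv 0$ or $\vartheta\equiv 0$, then reading off $\sigma_M=\sigma_N\circ f_t$ from the zeroth-order remainder and pinching $\sigma_M\equiv\sigma\equiv\sigma_N$) agrees with the paper.
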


\begin{proof}
From Lemma \ref{jacobians}, we deduce that
\begin{eqnarray*}
{\dt}\varphi-\Delta \varphi&=&\big\{\|A\|^2-2\sigma^{\perp}+\sigma_N(1-\varphi^2)\big\}\varphi\\
&&+\frac{1}{2}(\sigma_M-\sigma_N)(\varphi+\vartheta)(1-\varphi^2).
\end{eqnarray*}
Note that the quantities $1-\varphi^2$ and $\varphi+\vartheta$ are positive. Hence, because of our curvature assumptions, the last line of the above
equality is positive. Thus, there exists a time dependent function $h$ such that
$$\dt \varphi-\Delta \varphi\ge h\,\varphi.$$
From the parabolic maximum principle we deduce that $\varphi$ stays positive in time. Moreover, from the strong parabolic
maximum principle it follows that if $\varphi$ vanishes somewhere, then it vanishes identically in space and time. Hence, the sign of $\varphi$
is preserved by the flow. Similarly we prove the results concerning $\vartheta$. This completes the proof. 
\end{proof}

Now we want to explore the behavior of the function
$$\rho=\varphi\vartheta=u^2_1-u^2_2$$
under the graphical mean curvature flow.
\begin{lemma}\label{crucial}
Suppose that $(M,\gm)$ and $(N,\gn)$ are complete Riemann surfaces with
$(M,\gm)$ being compact such that their
curvatures $\sigma_M$ and $\sigma_N$ are related by the inequality
$\sigma:=\min\sigma_M\ge\sup\sigma_N.$ Let $f:M\to N$ be a strictly
area decreasing map.
\begin{enumerate}[\rm(a)]
\item
If $\sigma\ge 0$, then there exists a positive constant $c_0$ such that
$$\rho\ge\frac{c_0e^{\sigma t}}{\sqrt{1+c^2_0e^{2\sigma t}}},$$
for any $(x,t)$ in space-time.
\smallskip
\item
If $\sigma<0$, then there exists a positive constant $c_0$ such that
$$\rho\ge\frac{c_0e^{2\sigma t}}{\sqrt{1+c^2_0e^{4\sigma t}}},$$
for any $(x,t)$ in space-time.
\end{enumerate}
\end{lemma}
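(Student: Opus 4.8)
The plan is to derive a single scalar parabolic inequality for $\rho=\varphi\vartheta=u_1^2-u_2^2$ and then pass to an ordinary differential inequality along the spatial minimum of $\rho$. First I would combine the two evolution equations of Lemma \ref{jacobians}. Writing $\Delta(\varphi\vartheta)=\vartheta\,\Delta\varphi+\varphi\,\Delta\vartheta+2\langle\nabla\varphi,\nabla\vartheta\rangle$ and using the algebraic identities $(1-\varphi^2)\vartheta+(1-\vartheta^2)\varphi=(\varphi+\vartheta)(1-\rho)$ and $(1-\varphi^2)\vartheta-(1-\vartheta^2)\varphi=-(\varphi-\vartheta)(1+\rho)$, the terms containing $\sigma^{\perp}$ cancel and one is left with
$$\partial_{t}\rho-\Delta\rho=2\|A\|^2\rho+\tfrac12\sigma_M(\varphi+\vartheta)^2(1-\rho)-\tfrac12\sigma_N(\varphi-\vartheta)^2(1+\rho)-2\langle\nabla\varphi,\nabla\vartheta\rangle.$$
Since $f$ is strictly area decreasing, Lemma \ref{l} (the strong maximum principle applied to $\varphi$ and to $\vartheta$) forces $\varphi>0$ and $\vartheta>0$, hence $0<\rho\le 1$, throughout the graphical interval $[0,T_g)$.

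Next I would set $\rho_{\min}(t):=\min_{M}\rho(\cdot,t)$; since $M$ is compact, $\rho_{\min}$ is locally Lipschitz, and by the standard comparison argument for the minimum (Hamilton's trick) it suffices to bound $(\partial_t\rho)(x_t,t)$ from below at a point $x_t$ realizing the spatial minimum. At such a point $\Delta\rho\ge 0$ and $\nabla\rho=\vartheta\,\nabla\varphi+\varphi\,\nabla\vartheta=0$, so that $\langle\nabla\varphi,\nabla\vartheta\rangle=-(\vartheta/\varphi)\,|\nabla\varphi|^2\le 0$; together with $\|A\|^2\rho\ge 0$ this already yields, at $x_t$,
$$(\partial_t\rho)(x_t,t)\ \ge\ \tfrac12\sigma_M(\varphi+\vartheta)^2(1-\rho)-\tfrac12\sigma_N(\varphi-\vartheta)^2(1+\rho).$$
For the right-hand side I would substitute $(\varphi+\vartheta)^2=(\varphi-\vartheta)^2+4\rho$ and invoke the curvature hypotheses $\sigma_M\ge\sigma$, $\sigma_N\le\sigma$ (with $1-\rho\ge 0$, $1+\rho>0$ and $(\varphi-\vartheta)^2\ge 0$) to bound it below by $2\sigma\rho(1-\rho)-\sigma\rho\,(\varphi-\vartheta)^2$. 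When $\sigma\ge 0$, the elementary inequality $(\varphi-\vartheta)^2\le(1-\rho)^2$ (valid whenever $\varphi,\vartheta\in(0,1]$, since then $|\varphi-\vartheta|\le 1-\varphi\vartheta$) turns this into
$$(\partial_t\rho)(x_t,t)\ \ge\ 2\sigma\rho(1-\rho)-\sigma\rho(1-\rho)^2=\sigma\,\rho_{\min}(t)\bigl(1-\rho_{\min}(t)^2\bigr);$$
when $\sigma<0$ one instead drops the nonnegative term $-\sigma\rho(\varphi-\vartheta)^2\ge 0$ and uses $1-\rho\le 1-\rho^2$ to obtain $(\partial_t\rho)(x_t,t)\ge 2\sigma\,\rho_{\min}(t)(1-\rho_{\min}(t)^2)$.

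Finally I would compare $\rho_{\min}$ with the solution $y$ of the ODE $y'=k\sigma\,y(1-y^2)$, $y(0)=\rho_{\min}(0)$, where $k=1$ if $\sigma\ge 0$ and $k=2$ if $\sigma<0$; since $\rho_{\min}>0$ and the right-hand side is locally Lipschitz there, the comparison principle gives $\rho_{\min}(t)\ge y(t)$. Setting $\psi:=y/\sqrt{1-y^2}$ transforms the ODE into $\psi'=k\sigma\psi$, so $\psi(t)=c_0 e^{k\sigma t}$ with $c_0:=\rho_{\min}(0)/\sqrt{1-\rho_{\min}(0)^2}>0$ (and any $c_0>0$ works in the degenerate case $\rho\equiv 1$, i.e.\ $f$ constant); inverting gives $y(t)=c_0 e^{k\sigma t}/\sqrt{1+c_0^2 e^{2k\sigma t}}$, which is exactly the asserted lower bound since $\rho\ge\rho_{\min}$. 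The one genuinely delicate point is the cross term $-2\langle\nabla\varphi,\nabla\vartheta\rangle$, whose sign is not controlled in general; the argument works precisely because at a spatial minimum of the \emph{product} $\varphi\vartheta$ this term is automatically nonnegative. Everything else is bookkeeping with the two curvature inequalities and the elementary estimate $(\varphi-\vartheta)^2\le(1-\rho)^2$.
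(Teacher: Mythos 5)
Your proof is correct and follows essentially the same strategy as the paper: derive a reaction-diffusion inequality for $\rho=\varphi\vartheta$ from Lemma \ref{jacobians}, discard the gradient cross-term, bound the curvature reaction term below by a multiple of $\rho(1-\rho^2)$, and compare with the logistic ODE $y'=k\sigma y(1-y^2)$. The only real difference is in how the cross-term $-2\langle\nabla\varphi,\nabla\vartheta\rangle$ is controlled: you observe that at a spatial minimum of $\rho$ it is automatically nonnegative (Hamilton's trick), while the paper proves the global inequality $-2\langle\nabla\varphi,\nabla\vartheta\rangle\ge-\tfrac{1}{2\rho}\|\nabla\rho\|^2$ and then applies the comparison maximum principle; you also use the elementary inequality $(\varphi-\vartheta)^2\le(1-\rho)^2$ where the paper uses $1-\rho^2\le 2(1-u_1^2-u_2^2)\le 2(1-\rho^2)$, but these are equivalent after unwinding $\varphi\pm\vartheta=u_1\mp u_2$, so the two arguments coincide in substance.
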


\begin{proof}
From Lemma \ref{jacobians} we get,
\begin{equation*}
\dt\rho-\Delta\rho=2\rho\|A\|^2-2\langle\nabla\varphi,\nabla\vartheta\rangle+2(1-\rho)\sigma_Mu^2_1-2(1+\rho)\sigma_Nu^2_2.
\end{equation*}
Note that
\begin{eqnarray}
-2\rho\langle\nabla\varphi,\nabla\vartheta\rangle&+&\frac{1}{2}\|\nabla\rho\|^2
=\frac{1}{2}\big\{\|\nabla(\varphi\vartheta)\|^2-4\varphi\vartheta\langle\nabla\varphi,\nabla\vartheta\rangle\big\}\nonumber\\
&=&\frac{1}{2}\big\{\varphi^2\|\nabla\vartheta\|^2+\vartheta^2\|\nabla\varphi\|^2-2\varphi\vartheta\langle\nabla\varphi,\nabla\vartheta\rangle\big\}\nonumber\\
&\ge&\frac{1}{2}\big\{\|\varphi\nabla\vartheta\|-\|\vartheta\nabla\varphi\|\big\}^2.\nonumber
\end{eqnarray}
Since by assumption
$\sigma_M\ge\sigma\ge\sigma_N$, we deduce that
\begin{equation*}
\dt\rho-\Delta\rho\ge-\frac{1}{2\rho}\|\nabla\rho\|^2+2\sigma\rho(1-u^2_1-u^2_2).
\end{equation*}
One can algebraically check that
\begin{equation}\label{lm}
1-\rho^2\le 2(1-u^2_1-u^2_2)\le 2(1-\rho^2).
\end{equation}
Suppose at first that $\sigma\ge 0$. Then
$$\dt\rho-\Delta\rho\ge-\frac{1}{2\rho}\|\nabla\rho\|^2+\sigma\rho(1-\rho^2).$$
From the comparison maximum principle we obtain
$$\rho\ge\frac{c_0e^{\sigma t}}{\sqrt{1+c^2_0e^{2\sigma t}}},$$
where $c_0$ is a positive constant.

In the case where $\sigma<0$, from the equation \eqref{lm} we deduce that
$$\dt\rho-\Delta\rho\ge-\frac{1}{2\rho}\|\nabla\rho\|^2+2\sigma\rho(1-\rho^2),$$
from where we get the desired estimate.
\end{proof}

Let us state here the following auxiliary result which will be used later for several estimates. The proof is straightforward.

\begin{lemma}\label{evzeta}
Let $f:(M,\gm)\to(N,\gn)$ be an area decreasing map. Let $\eta$ be a positive 
smooth function depending on $\rho$ and let $\zeta$ be the function given 
by
$$\zeta:=\log\eta(\rho).$$
Then
\begin{eqnarray*}
\dt\zeta-\Delta\zeta&=&\frac{2\rho\eta_{\rho}}{\eta}\|A\|^2
+\frac{\eta_{\rho}}{\eta}\Big\{-2\langle\nabla\varphi,\nabla\vartheta\rangle
+\frac{1}{2\rho}\|\nabla\rho\|^2\Big\}\\
&&
-\frac{1}
{2\rho\eta^2}\big\{\eta\eta_{\rho}+2\rho\eta\eta_{\rho\rho}
-\rho\eta_{\rho}^2\big\}\|\nabla\rho\|^2+\frac{1}{2}\|\nabla\zeta\|^2\\
&&+\frac{2\eta_{\rho}}{\eta}\big\{(1-\rho)\sigma_Mu^2_1-(1+\rho)\sigma_Nu^2_2\big\}.
\end{eqnarray*}
\end{lemma}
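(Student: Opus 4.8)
The statement to prove is Lemma \ref{evzeta}: given $\zeta = \log\eta(\rho)$ for a positive smooth function $\eta$ of $\rho = \varphi\vartheta = u_1^2 - u_2^2$, derive the displayed evolution equation for $\dt\zeta - \Delta\zeta$. The plan is a direct computation starting from the evolution equation for $\rho$ already established in the proof of Lemma \ref{crucial}, namely
$$\dt\rho-\Delta\rho=2\rho\|A\|^2-2\langle\nabla\varphi,\nabla\vartheta\rangle+2(1-\rho)\sigma_Mu^2_1-2(1+\rho)\sigma_Nu^2_2.$$

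First I would record the chain rule for $\zeta = \log\eta(\rho)$:
$$\nabla\zeta = \frac{\eta_\rho}{\eta}\nabla\rho,\qquad \dt\zeta = \frac{\eta_\rho}{\eta}\dt\rho,$$
and for the Laplacian, $\Delta\zeta = \frac{\eta_\rho}{\eta}\Delta\rho + \Big(\frac{\eta_{\rho\rho}}{\eta} - \frac{\eta_\rho^2}{\eta^2}\Big)\|\nabla\rho\|^2$. Subtracting,
$$\dt\zeta - \Delta\zeta = \frac{\eta_\rho}{\eta}\big(\dt\rho - \Delta\rho\big) - \Big(\frac{\eta_{\rho\rho}}{\eta} - \frac{\eta_\rho^2}{\eta^2}\Big)\|\nabla\rho\|^2.$$
Then I would substitute the expression for $\dt\rho - \Delta\rho$ into the first term. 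The $\|A\|^2$ term produces $\frac{2\rho\eta_\rho}{\eta}\|A\|^2$, matching the first term of the claimed identity; the curvature terms produce $\frac{2\eta_\rho}{\eta}\{(1-\rho)\sigma_M u_1^2 - (1+\rho)\sigma_N u_2^2\}$, matching the last line; and the term $-2\langle\nabla\varphi,\nabla\vartheta\rangle$ becomes $-\frac{2\eta_\rho}{\eta}\langle\nabla\varphi,\nabla\vartheta\rangle$, which I would rewrite as $\frac{\eta_\rho}{\eta}\{-2\langle\nabla\varphi,\nabla\vartheta\rangle + \frac{1}{2\rho}\|\nabla\rho\|^2\} - \frac{\eta_\rho}{2\rho\eta}\|\nabla\rho\|^2$ in order to isolate the combination $-2\langle\nabla\varphi,\nabla\vartheta\rangle + \frac{1}{2\rho}\|\nabla\rho\|^2$ that appears in the statement (this is the same combination that played a role in the proof of Lemma \ref{crucial}, where it was bounded below).

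The remaining bookkeeping is to collect all the leftover $\|\nabla\rho\|^2$ coefficients and show they combine into $-\frac{1}{2\rho\eta^2}\{\eta\eta_\rho + 2\rho\eta\eta_{\rho\rho} - \rho\eta_\rho^2\}\|\nabla\rho\|^2 + \frac{1}{2}\|\nabla\zeta\|^2$. The leftover pieces are: $-\frac{\eta_\rho}{2\rho\eta}\|\nabla\rho\|^2$ (from the rewriting above) and $-\big(\frac{\eta_{\rho\rho}}{\eta} - \frac{\eta_\rho^2}{\eta^2}\big)\|\nabla\rho\|^2$ (from the Laplacian term). Writing $\|\nabla\zeta\|^2 = \frac{\eta_\rho^2}{\eta^2}\|\nabla\rho\|^2$, I would check that
$$-\frac{\eta_\rho}{2\rho\eta} - \frac{\eta_{\rho\rho}}{\eta} + \frac{\eta_\rho^2}{\eta^2} = -\frac{1}{2\rho\eta^2}\big(\eta\eta_\rho + 2\rho\eta\eta_{\rho\rho} - \rho\eta_\rho^2\big) + \frac{1}{2}\cdot\frac{\eta_\rho^2}{\eta^2},$$
which reduces to the algebraic identity $\tfrac{1}{2}\eta_\rho^2 = \eta_\rho^2 - \tfrac12\eta_\rho^2$ after multiplying through by $2\rho\eta^2$ and cancelling the $\eta\eta_\rho$ and $2\rho\eta\eta_{\rho\rho}$ terms — so it holds. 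This is "straightforward" exactly as the paper claims; there is no genuine obstacle, only the discipline of not dropping a factor of $2$ or a sign. The one spot deserving a moment's care is the rewriting of $-2\langle\nabla\varphi,\nabla\vartheta\rangle$: one must add and subtract $\frac{1}{2\rho}\|\nabla\rho\|^2$ with the correct weight $\frac{\eta_\rho}{\eta}$ so that the bracket matches verbatim what is needed later, and track the compensating term into the $\|\nabla\rho\|^2$ pool. Once that is done the identity falls out by comparing coefficients.
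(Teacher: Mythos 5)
Your computation is correct and is exactly the "straightforward" chain-rule calculation the paper alludes to without writing out: apply $\dt-\Delta$ to $\zeta=\log\eta(\rho)$, substitute the evolution equation for $\rho$ from the proof of Lemma \ref{crucial}, split off the combination $-2\langle\nabla\varphi,\nabla\vartheta\rangle+\tfrac{1}{2\rho}\|\nabla\rho\|^2$, and verify that the remaining $\|\nabla\rho\|^2$ coefficients reassemble into the stated form together with $\tfrac12\|\nabla\zeta\|^2$. The final algebraic check (clearing $2\rho\eta^2$ and cancelling the $\eta\eta_\rho$ and $2\rho\eta\eta_{\rho\rho}$ terms) indeed closes.
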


\section{A priori decay estimates for the mean curvature}
We will show in this section that under our curvature assumptions, in the strictly area decreasing
case, the norm of the mean curvature vector stays uniformly bounded as long as the flow exists.

\begin{lemma}\label{evmean}
Let $f:(M,\gm)\to(N,\gn)$ be an area decreasing map. Suppose that the curvatures of $M$ and $N$ satisfy
$\sigma:=\min\sigma_M\ge\sup\sigma_N.$
Let $\delta:[0,T)\to\real{}$ be a positive increasing real function and $\tau$ the time dependent function given by
$$\tau:=\log\big(\delta\|H\|^2+\varepsilon\big),$$
where $\varepsilon$ is a non-negative number.
Then,
\begin{eqnarray*}
\dt\tau-\Delta\tau&\le&\frac{2\delta}{\delta\|H\|^2+\varepsilon}\|H\|^2\|A\|^2+\frac{\delta'}{\delta\|H\|^2+\varepsilon}\|H\|^2\\
&&+\frac{2\delta}{\delta\|H\|^2+\varepsilon}\|H\|^2\sigma_M(1-u^2_1-u^2_2)+\frac{1}{2}\|\nabla\tau\|^2.
\end{eqnarray*}
\end{lemma}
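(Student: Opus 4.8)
The plan is to compute the evolution equation for $\|H\|^2$ directly and then translate it through the logarithm. First I would recall the standard Bochner-type evolution equation for the mean curvature vector under mean curvature flow in a fixed ambient manifold. In the present product setting, writing $H = H^\alpha e_\alpha$ in the adapted frame $\{e_1,e_2;e_3,e_4\}$, the squared norm satisfies a reaction–diffusion inequality of the form
\[
\dt\|H\|^2-\Delta\|H\|^2 = -2\|\nabla^\perp H\|^2 + 2\sum_{\alpha,\beta}\Big(\sum_{i,j}A^\alpha_{ij}H^\beta A^\beta_{ij}\Big)H^\alpha + (\text{ambient curvature terms}),
\]
where the genuinely quadratic reaction term is bounded by $2\|H\|^2\|A\|^2$, and the $-2\|\nabla^\perp H\|^2$ term is nonpositive, hence can be discarded for an upper bound. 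The ambient curvature contribution, coming from $\rk=\pi_M^*\operatorname{R}_M\oplus\pi_N^*\operatorname{R}_N$, must be identified and shown to equal $2\|H\|^2\sigma_M(1-u_1^2-u_2^2)$ up to terms of the right sign; this is exactly the same algebra that produced the curvature terms in Lemma~\ref{jacobians}, specialized to the mean curvature, and is where one uses that $M$, $N$ are surfaces so that $\operatorname{R}_M$, $\operatorname{R}_N$ are completely determined by $\sigma_M$, $\sigma_N$.

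Next I would feed the allowance for a time-dependent weight $\delta=\delta(t)$. Since $\delta$ is positive and increasing, writing $w:=\delta\|H\|^2+\varepsilon$ one gets
\[
\dt w-\Delta w = \delta'\|H\|^2 + \delta\big(\dt\|H\|^2-\Delta\|H\|^2\big)
\le \delta'\|H\|^2 + \delta\big(2\|H\|^2\|A\|^2 + 2\|H\|^2\sigma_M(1-u_1^2-u_2^2)\big),
\]
using $\delta'\ge 0$ to keep that term on the right and discarding $-2\delta\|\nabla^\perp H\|^2\le 0$. Then for $\tau=\log w$ the chain rule gives $\dt\tau-\Delta\tau = \tfrac{1}{w}(\dt w-\Delta w) + \tfrac{1}{w^2}\|\nabla w\|^2 = \tfrac{1}{w}(\dt w-\Delta w) + \|\nabla\tau\|^2$. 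Substituting the inequality for $\dt w-\Delta w$, dividing by $w=\delta\|H\|^2+\varepsilon$, and bounding $\tfrac{1}{w}\|\nabla w\|^2$ appropriately yields precisely
\[
\dt\tau-\Delta\tau \le \frac{2\delta\|H\|^2\|A\|^2}{\delta\|H\|^2+\varepsilon} + \frac{\delta'\|H\|^2}{\delta\|H\|^2+\varepsilon} + \frac{2\delta\|H\|^2\sigma_M(1-u_1^2-u_2^2)}{\delta\|H\|^2+\varepsilon} + \frac{1}{2}\|\nabla\tau\|^2,
\]
noting that the $\tfrac12\|\nabla\tau\|^2$ coefficient (rather than the full $\|\nabla\tau\|^2$ from the naive chain rule) is obtained by absorbing half of the gradient term via the Kato/Cauchy–Schwarz inequality $\|\nabla\|H\|^2\|^2 \le 4\|H\|^2\|\nabla^\perp H\|^2$ against the discarded $-2\delta\|\nabla^\perp H\|^2$ reserve — this is the only slightly delicate bookkeeping point.

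The main obstacle I anticipate is the careful identification of the ambient curvature term: one must show that the contraction of $\rk$ against $H$ and the tangent frame collapses, using the surface structure and the singular-value description of $e_1,\dots,e_4$ from Section~\ref{singular}, to the clean expression $2\|H\|^2\sigma_M(1-u_1^2-u_2^2)$ (with $\sigma_M$, not $\sigma_N$, appearing, after using $\sigma_M\ge\sigma_N$ to absorb the difference with the correct sign). Everything else — the Bochner formula, the logarithmic change of variables, the handling of $\delta'\ge 0$ and $\varepsilon\ge 0$ — is routine, and indeed the statement itself advertises the proof as essentially a computation parallel to Lemmas~\ref{parallel}–\ref{jacobians}.
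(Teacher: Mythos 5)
Your plan is essentially the paper's proof: the evolution equation for $\|H\|^2$ (the paper cites it from a survey), the identification $\rk(H,e_1,H,e_1)+\rk(H,e_2,H,e_2)\le\sigma_M(1-u_1^2-u_2^2)\|H\|^2$ via the singular-value frame and $\sigma_M\ge\sigma_N$, the Cauchy--Schwarz bound $\|A^H\|\le\|A\|\|H\|$, and the logarithmic change of variables with Kato's inequality and $\varepsilon\ge 0$ to turn $\|\nabla\tau\|^2-2\delta\|\nabla^\perp H\|^2/w$ into $\tfrac12\|\nabla\tau\|^2$. One internal inconsistency worth noting: in the displayed step you say you are ``discarding $-2\delta\|\nabla^\perp H\|^2\le 0$,'' which if taken literally would leave the full $\|\nabla\tau\|^2$ and not the stated $\tfrac12\|\nabla\tau\|^2$; but your closing remark correctly reverses this and uses exactly that term (together with $\|\nabla\|H\|^2\|^2=4\|H\|^2\|\nabla\|H\|\|^2\le 4\|H\|^2\|\nabla^\perp H\|^2$ and $\delta\|H\|^2\le\delta\|H\|^2+\varepsilon$) to recover the factor $\tfrac12$, which is the paper's argument.
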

\begin{proof}
Recall from \cite[Corollary 3.8]{sm} that the squared norm $\|H\|^2$ of the mean curvature vector evolves in time under the equation
\begin{eqnarray*}
\dt\|H\|^2-\Delta\|H\|^2&=&2\|A^H\|^2-2\|\nabla^{\perp}H\|^2\\
&&+2\rk(H,e_1,H,e_1)+ 2\rk(H,e_2,H,e_2),
\end{eqnarray*}
where $\{e_1,e_2\}$ is a local orthonormal frame with respect to $\gind$. Using the special frames introduced in subsection \ref{singular} we see that
\begin{eqnarray*}
&&\rk(H,e_1,H,e_1)+\rk(H,e_2,H,e_2)\nonumber\\
&&\quad\quad=\{H^2\}^2\rk(e_4,e_1,e_4,e_1)+\{H^1\}^2\rk(e_3,e_1,e_3,e_1)\nonumber\\
&&\quad\quad=\{H^4\}^2u^2_1(\mu^2\sigma_M+\lambda^2\sigma_N)+\{H^3\}^2u^2_1(\lambda^2\sigma_M+\mu^2\sigma_N)\nonumber\\
&&\quad\quad=\sigma_Mu^2_1(\lambda^2+\mu^2)\|H\|^2
-(\sigma_M-\sigma_N)u^2_1\big[\lambda^2\{H^4\}^2+\mu^2\{H^3\}^2\big]\nonumber\\
&&\quad\quad\le\sigma_M(1-u^2_1-u^2_2)\|H\|^2.
\end{eqnarray*}
Note that from Cauchy-Schwarz inequality
$$\|A^H\|\le\|A\|\|H\|.$$
Moreover, observe that at points where the mean curvature vector is non-zero,
from Kato\rq{s} inequality, we have that
$$\|\nabla^{\perp}H\|^2\ge\|\nabla\|H\|\|^2.$$
Consequently, at points where the norm $\|H\|$ of the mean curvature is not zero the following 
inequality holds
$$
\dt\|H\|^2-\Delta\|H\|^2\le-2\|\nabla\|H\|\|^2+2\|A\|^2\|H\|^2
+2\sigma_M(1-u^2_1-u^2_2)\|H\|^2.
$$
Now let us compute the evolution equation of the function $\tau$. We have,
\begin{eqnarray*}
\dt\tau-\Delta\tau&=&\frac{\delta(\dt\|H\|^2-\Delta\|H\|^2)}{\delta\|H\|^2+\varepsilon}
+\frac{\delta^2\|\nabla\|H\|^2\|^2}{(\delta\|H\|^2+\varepsilon)^2}
+\frac{\delta'\|H\|^2}{\delta\|H\|^2+\varepsilon}\\
&\le&-\frac{2\delta}{\delta\|H\|^2+\varepsilon}\|\nabla\|H\|\|^2
+\frac{\delta^2}{(\delta\|H\|^2+\varepsilon)^2}\|\nabla\|H\|^2\|^2\\
&&+\frac{2\delta}{\delta\|H\|^2+\varepsilon}\|H\|^2\|A\|^2
+\frac{\delta'}{\delta\|H\|^2+\varepsilon}\|H\|^2\\
&&+\frac{2\delta}{2\delta\|H\|^2+\varepsilon}\|H\|^2\sigma_M(1-u^2_1-u^2_2).
\end{eqnarray*}
Note that
$$-\frac{2\delta}{\delta\|H\|^2+\varepsilon}\|\nabla\|H\|\|^2
+\frac{1}{2}\frac{\delta^2}{(\delta\|H\|^2+\varepsilon)^2}\|\nabla\|H\|^2\|^2\le 0.$$
Therefore,
\begin{eqnarray*}
\dt\tau-\Delta\tau&\le&\frac{1}{2}\|\nabla\tau\|^2
+\frac{2\delta}{\delta\|H\|^2+\varepsilon}\|H\|^2\|A\|^2\\
&&+\frac{\delta'}{\delta\|H\|^2+\varepsilon}\|H\|^2
+\frac{2\delta}{\delta\|H\|^2+\varepsilon}\|H\|^2\sigma_M(1-u^2_1-u^2_2),
\end{eqnarray*}
and this completes the proof.
\end{proof}

\begin{theorem}\label{estmean}
Let $f:(M,\gm)\to(N,\gn)$ be an area decreasing map, where $M$ is compact 
and $N$ a complete Riemann surface. Suppose that the curvatures of $M$ and $N$ satisfy
$\sigma:=\min\sigma_M\ge\sup\sigma_N.$
Then the following statements hold.
\begin{enumerate}[\rm(a)]
\item
There exist
a positive time independent constant $C$ such that
$$\|H\|^2\le C,$$
as long as the flow exists.
\smallskip
\item
If $\sigma\ge 0$, the following improved
decay estimate holds
$$\|H\|^2\le C t^{-1},$$
where $C$ is again a positive constant.
\end{enumerate}
\end{theorem}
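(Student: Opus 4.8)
The plan is to run a maximum-principle argument on the function $\tau = \log(\delta\|H\|^2 + \varepsilon)$ from Lemma \ref{evmean}, combining it with the lower bound on $\rho = \varphi\vartheta$ from Lemma \ref{crucial}, so that the bad term $\|H\|^2\|A\|^2$ gets absorbed. The key observation is that in the evolution inequality for $\tau$ the dangerous term is $\frac{2\delta}{\delta\|H\|^2+\varepsilon}\|H\|^2\|A\|^2 \le 2\|A\|^2$, and $\|A\|^2$ is \emph{not} a priori bounded; however, from the Gau\ss\ equation $\|A\|^2 = 2u_1^2\sigma_M + 2u_2^2\sigma_N + \|H\|^2 - 2\sigma_{\gind}$, and more importantly one has good control of $\|A\|^2 \cdot \rho$ through the estimates already established. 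So the strategy is to study $\tau - \zeta$, where $\zeta = \log\eta(\rho)$ for a suitably chosen increasing function $\eta$, using Lemma \ref{evzeta}. The term $\frac{2\rho\eta_\rho}{\eta}\|A\|^2$ appearing in $\dt\zeta - \Delta\zeta$ is designed precisely to cancel (or dominate) the $2\|A\|^2$ coming from $\tau$, provided $\eta$ is chosen so that $\frac{\rho\eta_\rho}{\eta} \ge$ (a constant times) $\delta$ near the relevant range; the lower bound $\rho \ge c_0 e^{\sigma t}/\sqrt{1 + c_0^2 e^{2\sigma t}}$ (case $\sigma\ge 0$) or its analogue keeps $\rho$ bounded away from $0$, which is what makes this possible.

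Concretely, for part (a) I would first dispose of the case $\sigma < 0$: take $\varepsilon > 0$, $\delta \equiv 1$, and consider $w := \tau - \zeta$ with $\eta(\rho) = \rho^{-q}$ for a suitable constant $q>0$. At an interior spatial maximum of $w$ (for fixed $t$, or rather of $w$ over space-time on $[0,T']$) one has $\nabla\tau = \nabla\zeta$ and $\Delta(\tau - \zeta) \le 0$, so the inequality reduces to an ODE-type comparison in $t$; the $\|A\|^2$ terms cancel against $\frac{2\rho\eta_\rho}{\eta}\|A\|^2 = -2q\|A\|^2$ (note $\eta_\rho < 0$ here, so one should instead take $\eta$ increasing, e.g. $\eta(\rho) = \rho^{q}$ is wrong-signed; the correct choice is $\eta$ decreasing in a way that makes $\frac{\rho\eta_\rho}{\eta}$ negative — i.e. one subtracts a multiple of $\log\rho$, equivalently works with $\tau + q\log\rho$). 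After cancellation of the $\|A\|^2$ terms, what remains are the curvature terms $\sigma_M(1 - u_1^2 - u_2^2)\|H\|^2$ and the $\zeta$-side curvature terms $\frac{2\eta_\rho}{\eta}\{(1-\rho)\sigma_M u_1^2 - (1+\rho)\sigma_N u_2^2\}$, all of which are bounded (using $\|H\|^2 \le C(1 + \|A\|^2)$ via Gau\ss, but the $\|A\|^2$ part is already handled, and $1 - u_1^2 - u_2^2$ is controlled by $1-\rho^2$ via \eqref{lm}), plus the gradient terms from Lemma \ref{evzeta} which at the maximum combine into something of the right sign. This yields $\dt(\max_x w) \le C_1 + C_2\max_x w$ or simply a bound $\dt(\max_x w)\le C$, hence $w \le C + Ct$, and since $\zeta = \log\eta(\rho)$ is bounded (as $\rho$ is bounded above by $1$ and below away from $0$ on any finite time interval — but one needs uniformity in $t$, which in the $\sigma<0$ case requires care since $\rho$ may decay; one instead uses the sharper time-weighted choice of $\eta$ matching the $e^{2\sigma t}$ decay in Lemma \ref{crucial}(b)). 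The upshot is a uniform bound $\|H\|^2 \le C$, letting $\varepsilon \to 0$.

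For part (b), $\sigma \ge 0$, I would put $\delta(t) = t$ (or $\delta(t) = 1 + t$), so that $\tau = \log(t\|H\|^2 + \varepsilon)$; now the evolution inequality picks up the extra harmless term $\frac{\delta'}{\delta\|H\|^2 + \varepsilon}\|H\|^2 = \frac{1}{t\|H\|^2+\varepsilon}\|H\|^2 \le 1/t \cdot (t\|H\|^2)/(t\|H\|^2+\varepsilon)\le 1/t$ — wait, better: $\delta' = 1$ gives $\frac{\|H\|^2}{t\|H\|^2 + \varepsilon} \le 1/t$. Running the same maximum-principle comparison with $w = \tau - \zeta$, now using Lemma \ref{crucial}(a) which gives $\rho$ bounded \emph{uniformly below} by a positive constant for $\sigma > 0$ (and $\rho\ge c_0/\sqrt{1+c_0^2}$ always, even for $\sigma = 0$, since $e^{0}=1$), so $\zeta$ is uniformly bounded. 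The $\|A\|^2$ cancellation goes through as before, the extra $1/t$ term is integrable-after-log or rather contributes $\log t$ which is dominated, and we conclude $t\|H\|^2 + \varepsilon \le e^{w + \zeta} \le C$, i.e. $\|H\|^2 \le C/t$. The main obstacle I anticipate is the precise bookkeeping at the space-time maximum to ensure all gradient terms from Lemma \ref{evzeta} (there are three distinct $\|\nabla\rho\|^2$ contributions with $\rho$-dependent coefficients) have the correct sign after using $\nabla\tau = \nabla\zeta$, and choosing the exponent $q$ (or more generally the profile $\eta$) large enough that $-2q\|A\|^2$ beats $+2\|A\|^2$ with room to spare for the lower-order terms — this requires that $\frac{2\rho|\eta_\rho|}{\eta} \ge 2 + $ (slack), which for $\eta = \rho^{-q}$ means $2q\rho \cdot \rho^{-1} = 2q \ge 2 + $ slack, so $q$ just needs to be slightly bigger than $1$, but one must double-check the interaction with the $\eta_{\rho\rho}$ term $-\frac{1}{2\rho\eta^2}(\eta\eta_\rho + 2\rho\eta\eta_{\rho\rho} - \rho\eta_\rho^2)\|\nabla\rho\|^2$, which for a power law becomes an explicit computable quadratic in $q$ that must have the right sign.
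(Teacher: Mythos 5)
Your overall strategy --- comparing $\tau = \log(\delta\|H\|^2 + \varepsilon)$ against $\zeta = \log\eta(\rho)$ via Lemmas \ref{evzeta} and \ref{evmean} so that the $\|A\|^2$-terms cancel, then closing with the maximum principle --- is precisely the paper's; but the sign of $\eta$ in your write-up is backwards, and this is fatal to the argument as stated. In $w = \tau - \zeta$ the dangerous $\|A\|^2$-term drops out only if $\dt\zeta - \Delta\zeta$ contributes at least $+2\|A\|^2$, i.e.\ if $\tfrac{\rho\eta_\rho}{\eta} \ge 1$, which forces $\eta$ to be \emph{increasing}. Your final choice $\eta(\rho) = \rho^{-q}$ (equivalently, working with $\tau + q\log\rho$) gives $\tfrac{\rho\eta_\rho}{\eta} = -q < 0$; the $\|A\|^2$-contributions in $\dt w - \Delta w$ then add up to $2(1+q)\|A\|^2$ instead of cancelling, so the comparison cannot close. (Your later computation ``$2q\rho\cdot\rho^{-1}=2q$'' silently replaces $\eta_\rho$ by $|\eta_\rho|$, which is where the sign flipped.)

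The paper's choice is the simplest possible: $\eta(\rho) = \rho$, i.e.\ $\Theta := \log\tfrac{\delta\|H\|^2 + \varepsilon}{\rho}$. Then $\tfrac{\rho\eta_\rho}{\eta} = 1$ exactly, so no ``slack'' $q>1$ is needed --- the elementary estimate $\|H\|^2 \le 2\|A\|^2$ already renders the leftover $\|A\|^2$-coefficient nonpositive. The $\eta_{\rho\rho}$-term you worry about vanishes identically since $\eta$ is linear, and the curvature terms from the two lemmas combine (using $\sigma_N \le \sigma \le \sigma_M$ together with $(1-\rho)u_1^2-(1+\rho)u_2^2 = \rho(1-u_1^2-u_2^2)$) into the single manageable expression $\tfrac{\delta'\|H\|^2 - \varepsilon\|H\|^2 - 2\varepsilon\sigma(1-u_1^2-u_2^2)}{\delta\|H\|^2+\varepsilon}$. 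For (a) the paper takes $\delta \equiv 1$, $\varepsilon = 0$; contrary to your worry, the possible decay of $\rho$ when $\sigma<0$ is immaterial, since the maximum principle bounds $\Theta$ and then $\|H\|^2 = \rho\, e^{\Theta} \le e^{\Theta}$ uses only the trivial upper bound $\rho \le 1$. For (b) the paper takes $\delta(t)=t$, $\varepsilon = 1$ and discards the leftover curvature term using $\sigma\ge 0$, again needing only boundedness of $\Theta$.
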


\begin{proof}
Consider the time dependent function $\Theta$ given by
$$\Theta:=\log\frac{\delta\|H\|^2+\varepsilon}{\rho},$$
where $\delta$ is a positive increasing function. Making use of the estimate
$$\|H\|^2\le 2\|A\|^2$$
and from the evolution equations of
Lemma \ref{evzeta} and Lemma \ref{evmean} we deduce that
\begin{eqnarray*}
\dt\Theta-\Delta\Theta&\le&\frac{1}{2}\langle\nabla\Theta,\nabla\tau+\nabla\rho\rangle\\
&&+\frac{\delta'\|H\|^2-\varepsilon\|H\|^2
-2\varepsilon\sigma(1-u^2_1-u^2_2)}{\delta\|H\|^2+\varepsilon}.
\end{eqnarray*}
Choosing $\delta=1$ and $\varepsilon=0$, we obtain that
$$\dt \Theta-\Delta\Theta\le\frac{1}{2}\langle\nabla\Theta,\nabla\tau+\nabla\rho\rangle.$$
From the maximum principle the norm $\|H\|$ remains uniformly bounded in time regardless of the sign of the constant $\sigma$. 
In the case where $\sigma\ge 0$, choosing
$\varepsilon=1$ and $\delta=t$, we 
deduce that $\Theta$ remains uniformly bounded in time which gives the desired decay
estimate for $H$.
\end{proof}

\section{Blow-up analysis and convergence}

\subsection{Cheeger-Gromov compactness for metrics}
Let us recall here the basic notions and definitions.
Fore more details see the books \cite[Chapter 5]{morgan}, \cite[Chapter 3]{chow} and \cite[Chapter 9]{andrews}.

\begin{definition}[$C^{\infty}$-convergence] Let $(E,\pi,\Sigma)$ be a vector
bundle endowed
with a Riemannian metric
$\gind$ and a metric connection $\nabla$ and suppose that
$\{\xi_k\}_{k\in\natural{}}$ is a sequence of sections of $E$.
Let $\Omega$ be an open subset of $\Sigma$ with compact closure $\bar{\Omega}$
in $\Sigma$. Fix a natural number $p\ge 0$. We say that $\{\xi_k\}_{k\in\natural{}}$
converges in $C^p$ to $\xi_{\infty}\in\Gamma(E|_{\bar\Omega})$, if for every
$\varepsilon>0$ there exists $k_0=k_0(\varepsilon)$ such that
$$\sup_{0\le\alpha\le p}\sup_{x\in\bar{\Omega}}\big\|\nabla^{\alpha}
(\xi_k-\xi_{\infty})\big\|<\varepsilon$$
whenever $k\ge k_0.$ We say that $\{\xi_k\}_{k\in\natural{}}$ converges in $C^{\infty}$
to $\xi_{\infty}\in\Gamma(E|_{\bar\Omega})$ if $\{\xi_k\}_{k\in\natural{}}$ converges in 
$C^{p}$ to $\xi_{\infty}\in\Gamma(E|_{\bar\Omega})$ for any $p\in\natural{}$.
\end{definition}

\begin{definition}[$C^{\infty}$-convergence on compact sets] Let $(E,\pi,\Sigma)$ be a
vector bundle  endowed with a Riemannian metric $\gind$ and a metric connection $\nabla$. 
Let $\{U_n\}_{n\in\natural{}}$ be an exhaustion of $\Sigma$
and $\{\xi_{k}\}_{k\in\natural{}}$ be a sequence of sections of $E$ defined on open
sets $A_{k}$ of $\Sigma$. We say that $\{\xi_{k}\}_{k\in\natural{}}$ converges
smoothly on
compact sets to $\xi_{\infty}\in\Gamma(E)$ if:
\begin{enumerate}[\rm (a)]
\item
For every $n\in\natural{}$ there exists
$k_0$ such that $\bar{U}_n\subset A_k$ for all natural numbers $k\ge k_0$.
\smallskip
\item
The sequence
$\{\xi|_{\bar{U}_k}\}_{k\ge k_0}$ converges in $C^{\infty}$ to the restriction of the section
$\xi_{\infty}$ on $\bar{U}_k$.
\end{enumerate}
\end{definition}

In the next definitions we recall the notion of smooth Cheeger-Gromov convergence of sequences
of Riemannian manifolds.

\begin{definition}[Pointed manifolds]
A pointed Riemannian manifold $(\Sigma,\gind,x)$ is a Riemannian manifold
$(\Sigma,\gind)$
with a choice of origin or base point $x\in \Sigma$. If the metric $\gind$ is complete, we say that
$(\Sigma,\gind,x)$ is a complete pointed Riemannian manifold.
\end{definition}

\begin{definition}[Cheeger-Gromov smooth convergence] \label{54}
A sequence of complete pointed
Riemannian manifolds $\{(\Sigma_k,\gind_k,x_k)\}_{k\in\natural{}}$ smoothly converges 
in the sense of Cheeger-Gromov to a 
complete 
pointed  Riemannian manifold $(\Sigma_{\infty},\gind_{\infty},x_{\infty})$, if there exists:
\begin{enumerate}[\rm (a)]
\item
An exhaustion $\{U_k\}_{k\in\natural{}}$ of $\Sigma_{\infty}$ with $x_{\infty}\in U_k$, for all 
$k\in\natural{}$.
\medskip
\item
A sequence of diffeomorphisms
$\Phi_k:U_k\to\Phi_k(U_k)\subset\Sigma_k$ with
$\Phi_k(x_{\infty})=x_k$ and such that $\{\Phi_k^*\gind_k\}_{k\in\natural{}}$ smoothly converges
in $C^{\infty}$ to $\gind_{\infty}$ on compact sets in $\Sigma_{\infty}$.
\end{enumerate}
The family $\{(U_k,\Phi_k)\}_{k\in\natural{}}$ is called a family of convergence pairs
of the sequence $\{(\Sigma_k,\gind_k,x_k)\}_{k\in\natural{}}$ with respect to the limit
$(\Sigma_{\infty},\gind_{\infty},x_{\infty})$.
\end{definition}

In the sequel, when we say {\sl smooth convergence}, we will always mean smooth convergence
in the sense of Cheeger-Gromov.

The family of convergence pairs is not unique. However, two such families
$\{(U_k,\Phi_k)\}_{k\in\natural{}}$,
$\{(W_k,\Psi_k)\}_{k\in\natural{}}$ are equivalent in the sense that
there exists an isometry $\mathcal{I}$ of the limit $(\Sigma_{\infty},\gind_{\infty},x_{\infty})$
such that, for every compact subset $K$ of $\Sigma_{\infty}$ there exists a natural
number $k_0$ such that for any natural $k\ge k_0$:
\begin{enumerate}[\rm (a)]
\item
the mapping $\Phi^{-1}_k\circ\Psi_k$ is well defined over $K$ and
\medskip
\item
the sequence $\{\Phi^{-1}_k\circ\Psi_k\}_{k\ge k_0}$ smoothly converges to $\mathcal{I}$
on $K$.
\end{enumerate}
In fact, the limiting pointed Riemannian manifold $(\Sigma_{\infty},\gind_{\infty},x_{\infty})$ 
of the Definition \ref{54} is unique up to isometries (see \cite[Lemma 5.5]{morgan}).

\begin{definition}
A complete Riemannian manifold $(\Sigma,\gind)$ is said to have bounded geometry, if the following
conditions are satisfied:
\begin{enumerate}[\rm (a)] 
\item
For any integer $j\ge 0$ there exists a uniform constant $C_j$ such that 
$\|\nabla^j\operatorname{R}(\gind)\|\le C_j.$
\smallskip
\item 
The injectivity radius satisfies $\operatorname{inj}_{\gind}(\Sigma)>0$.
\end{enumerate}
\end{definition}
The following proposition is standard and will be useful in the proof of the long time existence of the graphical mean
curvature flow.

\begin{proposition}\label{CG1}
Let $(\Sigma,\gind)$ be a complete Riemannian manifold with bounded geometry. Suppose that $\{a_k\}_{k\in\natural{}}$ is an increasing sequence of real numbers that tends to $+\infty$
and let $\{x_k\}_{k\in\natural{}}$ be a sequence of points on $\Sigma.$ Then, the sequence
$(\Sigma,a^2_k\gind,x_k)$ smoothly subconverges to the standard euclidean space $(\real{m},\gind_{\operatorname{euc}},0)$.
\end{proposition}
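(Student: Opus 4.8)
The plan is to reduce the statement to the classical Cheeger--Gromov compactness theorem. Recall (see \cite[Chapter 5]{morgan} or \cite[Chapter 3]{chow}) that a sequence of complete pointed Riemannian manifolds $(\Sigma_k,\gind_k,x_k)$ smoothly subconverges in the sense of Cheeger--Gromov provided one has uniform bounds $\|\nabla^j_{\gind_k}\operatorname{R}(\gind_k)\|\le C_j$ for each $j\ge 0$ and a uniform lower bound $\operatorname{inj}_{\gind_k}(x_k)\ge\iota_0>0$ on the injectivity radius at the base points. So the entire task is to verify that the rescaled sequence $\gind_k:=a_k^2\gind$, based at the given points $x_k$, meets these two hypotheses, and then to identify the limit as flat Euclidean space.

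First I would record the scaling behaviour of the relevant quantities. Under a constant conformal rescaling $\bar\gind=a^2\gind$ with $a>0$, the curvature tensor (as a $(1,3)$-tensor) satisfies $\operatorname{R}(\bar\gind)=\operatorname{R}(\gind)$, the Levi-Civita connection is unchanged, and the norm of the $j$-th covariant derivative of the curvature scales as
$$\|\nabla^j_{\bar\gind}\operatorname{R}(\bar\gind)\|_{\bar\gind}=a^{-(j+2)}\|\nabla^j_{\gind}\operatorname{R}(\gind)\|_{\gind}.$$
Since $(\Sigma,\gind)$ has bounded geometry, $\|\nabla^j_{\gind}\operatorname{R}(\gind)\|_{\gind}\le C_j$, and since $a_k\to+\infty$ we may assume $a_k\ge 1$ for $k$ large, so $\|\nabla^j_{\gind_k}\operatorname{R}(\gind_k)\|_{\gind_k}\le a_k^{-(j+2)}C_j\to 0$ for every $j$. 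In particular the curvature bounds hold uniformly (indeed they tend to zero). Likewise the injectivity radius scales linearly, $\operatorname{inj}_{\gind_k}(x_k)=a_k\operatorname{inj}_{\gind}(x_k)\ge a_k\operatorname{inj}_{\gind}(\Sigma)\to+\infty$, using condition (b) in the definition of bounded geometry. Thus both Cheeger--Gromov hypotheses are satisfied (with room to spare), and the compactness theorem yields a complete pointed limit $(\Sigma_\infty,\gind_\infty,x_\infty)$ to which a subsequence converges smoothly.

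It remains to identify the limit. Because $\|\operatorname{R}(\gind_k)\|_{\gind_k}=a_k^{-2}\|\operatorname{R}(\gind)\|_{\gind}\to 0$ uniformly on compact sets, the limit metric $\gind_\infty$ has $\operatorname{R}(\gind_\infty)\equiv 0$, i.e.\ it is flat; and since $\operatorname{inj}_{\gind_k}(x_k)\to\infty$, the limit is a complete, simply connected (after passing to the limit the injectivity radius is infinite, forcing $\Sigma_\infty$ to be diffeomorphic to $\mathbb R^m$) flat manifold, hence isometric to $(\real{m},\gind_{\operatorname{euc}},0)$ by the classification of complete flat manifolds with infinite injectivity radius. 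This gives the claimed conclusion. The only mildly delicate point is the injectivity-radius identification of the limit: one must invoke that the injectivity radius is continuous (or at least lower semicontinuous) under Cheeger--Gromov convergence together with the uniform two-sided curvature control, so that $\operatorname{inj}_{\gind_\infty}(x_\infty)=\lim_k\operatorname{inj}_{\gind_k}(x_k)=+\infty$; this is standard and is exactly where the compactness package does its work. Everything else is the elementary scaling bookkeeping above.
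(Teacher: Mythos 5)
The paper does not prove Proposition \ref{CG1}; it simply declares the statement standard and points to the textbook references (Morgan--Tian, Chow et al., Andrews--Hopper), so there is no in-paper argument to compare against. Your proof is correct and is the one a reader would reconstruct from those sources. The scaling bookkeeping is right: a constant rescaling leaves the Levi-Civita connection and the $(1,3)$-curvature tensor unchanged, and one computes $\|\nabla^j\operatorname{R}(a^2\gind)\|_{a^2\gind}=a^{-(j+2)}\|\nabla^j\operatorname{R}(\gind)\|_{\gind}$ and $\operatorname{inj}_{a^2\gind}=a\,\operatorname{inj}_{\gind}$; hence for $a_k\to\infty$ the curvature bounds tend to zero and $\operatorname{inj}_{a_k^2\gind}(x_k)\to\infty$, so Hamilton's Cheeger--Gromov compactness theorem produces a smoothly subconverging complete flat limit $(\Sigma_\infty,\gind_\infty,x_\infty)$. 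The one point that needs care, and which you correctly isolate, is that the limit must have infinite injectivity radius, so that the flat limit is $\real{m}$ and not a nontrivial flat quotient. Note though that what is actually needed is the direction $\operatorname{inj}_{\gind_\infty}(x_\infty)\ge\liminf_k\operatorname{inj}_{a_k^2\gind}(x_k)$, so ``lower semicontinuity'' in the usual sense is the wrong word; the clean justification is Klingenberg's lemma: the flat limit has no conjugate points, so a finite $\operatorname{inj}_{\gind_\infty}(x_\infty)$ would be realized by a geodesic loop through $x_\infty$ of bounded length, and such a loop persists under $C^\infty$ Cheeger--Gromov convergence, producing geodesic loops of bounded length near $x_k$ and contradicting $\operatorname{inj}_{a_k^2\gind}(x_k)\to\infty$. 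A more direct alternative that sidesteps this issue altogether is to build the convergence pairs explicitly from $\gind$-geodesic normal coordinates at $x_k$ rescaled by $a_k$: for $k$ large these cover a coordinate ball of any prescribed radius, and the decaying curvature bounds force the coordinate components of $a_k^2\gind$ and all their derivatives to converge to $\delta_{ij}$, which identifies the limit as $(\real{m},\gind_{\operatorname{euc}},0)$ in one step without first extracting an abstract limit.
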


%

We will use the following definition of uniformly bounded geometry for a sequence of pointed Riemannian manifolds.

\begin{definition}
We say that a sequence $\{(\Sigma_k,\gind_k,x_k)\}_{k\in\natural{}}$ of complete pointed Riemannian manifolds
has uniformly bounded geometry if the following conditions are satisfied:
\begin{enumerate}[\rm (a)]
\item
For any $j\ge 0$ there exists a uniform constant $C_j$ such that for each $k\in\natural{}$ it 
holds
$\|\nabla^j\operatorname{R}(\gind_k)\|\le C_j.$
\item
\smallskip
There exists a uniform constant $c_0$ such $\operatorname{inj}_{\gind_k}(\Sigma_k)\ge c_0>0.$
\end{enumerate}
\end{definition}
In the next result we state the Cheeger-Gromov compactness theorem for sequences of
complete pointed Riemannian manifolds. The version that we present here is due to Hamilton (see for
example \cite{hamilton3} or \cite[Chapters 3 \& 4]{chow}). 
\begin{theorem}[Cheeger-Gromov compactness]
Let $\{(\Sigma_k,\gind_k,x_k)\}_{k\in\natural{}}$ be a sequence
of complete pointed Riemannian manifolds with uniformly bounded geometry.
Then, the sequence $\{(\Sigma_k,\gind_k,x_k)\}_{k\in\natural{}}$ subconverges smoothly to
a complete pointed Riemannian manifold $(\Sigma_{\infty},\gind_{\infty},x_{\infty})$.
\end{theorem}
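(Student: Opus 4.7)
The plan is to exploit the uniform bounds on curvature and injectivity radius to construct geodesic normal coordinate charts of fixed radius around each base point, derive uniform $C^{\infty}$ bounds on the pulled-back metrics via the Jacobi equation, and then glue local $C^{\infty}_{\operatorname{loc}}$-convergent subsequences into a global pointed Riemannian limit.

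\textbf{Local charts and uniform estimates.} Fix $r_0\in(0,c_0)$. The uniform injectivity radius lower bound $\operatorname{inj}_{\gind_k}(\Sigma_k)\ge c_0$ guarantees that, for each $k$, the exponential map at $x_k$ restricts to a diffeomorphism from the Euclidean ball $B_{r_0}(0)\subset T_{x_k}\Sigma_k$ onto its image in $\Sigma_k$. Choosing an orthonormal basis of $T_{x_k}\Sigma_k$ identifies this tangent space with $\real{m}$ and produces a normal coordinate chart sending $0$ to $x_k$; denote by $\tilde{\gind}_k$ the pulled-back metric on $B_{r_0}(0)$. Jacobi field comparison combined with $\|\operatorname{R}(\gind_k)\|\le C_0$ yields a uniform two-sided bound $C^{-1}\gind_{\operatorname{euc}}\le\tilde{\gind}_k\le C\,\gind_{\operatorname{euc}}$. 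Differentiating the Jacobi equation and invoking $\|\nabla^j\operatorname{R}(\gind_k)\|\le C_j$ by induction on $j$ yields uniform $C^{p+2}$ estimates on the components of $\tilde{\gind}_k$ in these coordinates for every $p\ge 0$. The Arzelà-Ascoli theorem together with a diagonal subsequence argument then produces, after relabeling, a smooth Riemannian metric $\tilde{\gind}_\infty$ on $B_{r_0}(0)$ such that $\tilde{\gind}_k\to\tilde{\gind}_\infty$ in $C^{\infty}_{\operatorname{loc}}(B_{r_0}(0))$.

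\textbf{Exhaustion and gluing.} To upgrade a single-chart convergence to the full limit, fix an increasing sequence of radii $R_n\to\infty$. For each $n$, cover $B_{R_n}(x_k)\subset\Sigma_k$ by geodesic balls of radius $r_0/2$ centered at points $y_{k,n}^{(1)},\dots,y_{k,n}^{(N_n)}$; Bishop--Gromov volume comparison applied with the uniform curvature bound guarantees that $N_n$ can be chosen independently of $k$. At each center the normal coordinate construction of the previous paragraph applies verbatim, and the transition maps between overlapping charts are compositions of exponential maps, hence uniformly $C^{\infty}$-controlled. A further diagonal extraction produces simultaneously convergent local limits that are compatible on overlaps, and gluing them yields a smooth $m$-dimensional manifold $\Sigma_\infty$, a smooth Riemannian metric $\gind_\infty$, a base point $x_\infty$ corresponding to $0$ in the first chart, and diffeomorphisms $\Phi_k:U_k\to\Phi_k(U_k)\subset\Sigma_k$ with $\Phi_k(x_\infty)=x_k$ that form a family of convergence pairs in the sense of Definition \ref{54}. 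Completeness of $(\Sigma_\infty,\gind_\infty)$ follows from the fact that each $\Phi_k$ is an approximate isometry onto its image and each $(\Sigma_k,\gind_k)$ is complete, so closed $\gind_\infty$-balls around $x_\infty$ sit inside relatively compact chart neighborhoods for all sufficiently large $k$.

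\textbf{Main obstacle.} The heart of the argument is the passage from covariant curvature bounds $\|\nabla^j\operatorname{R}(\gind_k)\|\le C_j$ to ordinary $C^{p+2}$ bounds on the metric coefficients in normal coordinates; this requires a careful induction based on the Jacobi equation and its iterated covariant derivatives, which is precisely where the full hypothesis of uniformly bounded geometry at every order is used. The remaining gluing step is mostly bookkeeping, but one must arrange a single subsequence along which all countably many local charts converge simultaneously and verify that the resulting transition maps remain smooth diffeomorphisms in the limit; as the statement is a standard result of Cheeger and Gromov in the form established by Hamilton, I would only sketch these points and refer to \cite{hamilton3}.
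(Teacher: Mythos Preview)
The paper does not prove this theorem at all: it states the Cheeger--Gromov compactness theorem as a known result and cites Hamilton \cite{hamilton3} and \cite[Chapters 3 \& 4]{chow} for the proof, so there is nothing to compare your argument against within the paper itself.

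Your sketch follows the standard outline and is essentially correct, and you yourself defer the technical heart of the argument to \cite{hamilton3}. One remark worth making is that the cited proofs (Hamilton's, and the treatment in Chow et al.) work primarily in \emph{harmonic} coordinates rather than in geodesic normal coordinates. The reason is that in harmonic coordinates the metric satisfies an elliptic equation whose right-hand side is built from the curvature, so Schauder theory converts the covariant bounds $\|\nabla^j\operatorname{R}\|\le C_j$ directly into $C^{p+2,\alpha}$ bounds on the metric components; in normal coordinates the Jacobi-equation induction you describe also works when one has bounds on all covariant derivatives of curvature, but the bookkeeping is heavier and the estimates degrade faster with the radius. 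Since you assume bounds on derivatives of all orders, either route succeeds, but if you intend to flesh this out rather than just cite it, harmonic coordinates will make the induction in your ``main obstacle'' paragraph considerably cleaner.
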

\begin{remark}
Due to an estimate from Cheeger, Gromov and Taylor \cite{cheeger}, the above compactness
theorem still holds under the weaker assumption that the injectivity radius is uniformly bounded from below by a positive constant only along
the base points $\{x_k\}_{k\in\natural{}}$, thereby avoiding the assumption of the uniform lower bound for
$\operatorname{inj}_{\gind_k}(\Sigma_k)$.
\end{remark}
\subsection{Convergence of immersions}

Let us begin our exposition with the geometric limit of a sequence of immersions.

\begin{definition}[Convergence of isometric immersions]
Suppose that $\{(L_k,\operatorname{h}_k,y_k)\}_{k\in\natural{}}$ is a sequence of pointed Riemannian manifolds
and $\{F_k\}_{k\in\natural{}}$ a sequence of isometric immersions
$F_k:(\Sigma_k,\gind_k)\to(L_k,\operatorname{h}_k)$ such that
$F_k(x_k)=y_k$, where $\{\Sigma_k\}_{k\in\natural{}}$ is a family of manifolds and $\{x_k\}_{\in\natural{}}$
a sequence such that $x_k\in\Sigma_k$ for any $k\in\natural{}$. We say that the sequence $\{F_k\}_{k\in\natural{}}$ converges
smoothly to an isometric immersion $F_{\infty}:
(\Sigma_{\infty},\gind_{\infty},x_{\infty})\to(L_{\infty},\operatorname{h}_{\infty},y_{\infty})$ if the following conditions are
satisfied:
\begin{enumerate}[\rm (a)]
\item
The sequence $\{(\Sigma_k,\gind_k,x_k)\}_{k\in\natural{}}$ smoothly converges to the pointed
Riemannian manifold $(\Sigma_{\infty},\gind_{\infty},x_{\infty}).$
\smallskip
\item
The sequence $\{(L_k,\operatorname{h}_k,y_k)\}_{k\in\natural{}}$ smoothly converges to the
pointed Riemannian manifold $(L_{\infty},\operatorname{h}_{\infty},y_{\infty}).$
\smallskip
\item
If $\{(U_k,\Phi_k)\}_{k\in\natural{}}$ is a family of convergence pairs of the sequence
$\{(\Sigma_k,\gind_k,x_k)\}_{k\in\natural{}}$ and $\{(W_k,\Psi_k)\}_{k\in\natural{}}$ is a 
family of convergence pairs of the sequence
$\{(L_k,\operatorname{h}_k,y_k)\}_{k\in\natural{}}$ then,
for each $k\in\natural{}$, it holds $F_k\circ\Phi_k(U_k)\subset\Psi_k(W_k)$ and
$\Psi_k^{-1}\circ F\circ\Phi_k$ smoothly converges
to $F_{\infty}$ on compact sets.
\end{enumerate}
\end{definition}

The following result
holds true (see for example \cite[Corollary 2.1.11]{cooper} or \cite[Theorem 2.1]{chen2}).

\begin{lemma}
Suppose that $(L,\operatorname{h})$ is a complete Riemannian manifold with
bounded geometry. Then for any $C>0$ there exists a positive constant $r>0$ such
that
$\operatorname{inj}_{\gind}(\Sigma)>r$
for any isometric immersion $F:(\Sigma,\gind)\to (L,\operatorname{h})$
such that the norm $\|A_F\|$ of its second fundamental form
satisfies $\|A_F\|\le C$.
\end{lemma}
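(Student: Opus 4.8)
The plan is to avoid any blow-up and argue directly, combining the Gau{\ss} equation with Klingenberg's lemma; a compactness/blow-up proof in the spirit of Proposition \ref{CG1} is also possible, and I sketch it at the end. Throughout I regard $(\Sigma,\gind)$ as complete — this is the only setting in which the conclusion can hold (a point near the ``edge'' of an incomplete $\Sigma$ has small injectivity radius no matter how small $\|A_F\|$ is) and it is satisfied in every application below, where $\Sigma$ is in fact compact. Since $(L,\operatorname{h})$ has bounded geometry, I fix once and for all a bound $K_0$ for its sectional curvature and a lower bound $i_0>0$ for $\operatorname{inj}_{\operatorname{h}}(L)$.

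First I would use the Gau{\ss} equation: $\|A_F\|\le C$ and $|\operatorname{sec}_L|\le K_0$ force a pointwise bound $|\operatorname{sec}_\gind|\le K_1$ with $K_1=K_1(C,K_0)$, so the conjugate radius of $(\Sigma,\gind)$ is at least $r_0:=\pi/\sqrt{K_1}$ (to be read as $+\infty$ if $K_1\le 0$). By Klingenberg's lemma this gives $\operatorname{inj}_\gind(p)\ge\min\{r_0,\tfrac12\ell_\gind(p)\}$ for every $p\in\Sigma$, where $\ell_\gind(p)$ is the length of a shortest geodesic loop of $(\Sigma,\gind)$ based at $p$. Hence the problem reduces to producing a constant $\ell_1>0$, depending only on $C$ and the bounded-geometry data of $L$, with $\ell_\gind(p)\ge\ell_1$ for all $p$; the lemma then holds with $r:=\min\{r_0,\ell_1/2\}$.

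For that reduced statement, let $\gamma\colon[0,\ell]\to\Sigma$ be a unit-speed geodesic loop based at $p$ and put $\sigma:=F\circ\gamma$, a unit-speed loop in $(L,\operatorname{h})$ based at $F(p)$. I would choose $r_1=r_1(K_0,i_0)\in(0,i_0]$ small enough that on the $\operatorname{h}$-geodesic ball $B_{\operatorname{h}}(F(p),r_1)$ the $\operatorname{h}$-normal coordinates are uniformly comparable to the Euclidean ones and have Christoffel symbols bounded by $cK_0|x|$; if $\ell\ge r_1$ there is nothing to prove, so assume $\ell<r_1$, whence $\sigma$ stays in that ball. Since $\gamma$ is a $\gind$-geodesic, the definition of the second fundamental tensor gives $\tilde\nabla_{\sigma'}\sigma'=A_F(\gamma',\gamma')$, so the $\operatorname{h}$-geodesic curvature of $\sigma$ is $\le\|A_F\|\le C$; translating this, together with the Christoffel bound, into an estimate for the ordinary second derivative in the chart yields $|\ddot\sigma(s)|_{\operatorname{euc}}\le C'+C''\ell$ for $s\in[0,\ell]$, with $C',C''$ depending only on $C,K_0,i_0$. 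A Taylor expansion at $s=0$, using $\sigma(0)=\sigma(\ell)=0$ and $|\dot\sigma(0)|_{\operatorname{euc}}=1$, then gives
$$\ell=\big|\sigma(\ell)-\ell\,\dot\sigma(0)\big|_{\operatorname{euc}}\le\tfrac12\big(C'+C''\ell\big)\ell^2,$$
so if in addition $\ell\le 1$ then $\ell\ge 2/(C'+C'')$. Thus every geodesic loop has length at least $\ell_1:=\min\{r_1,1,2/(C'+C'')\}>0$, which is exactly what was needed.

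The one genuinely delicate point is the precise pointed form of Klingenberg's lemma invoked above, together with the uniformity of the constants $c,C',C''$ in the normal-coordinate computation; both are standard, and for the latter one also refers to the cited literature. Alternatively, one can run a contradiction argument: if immersions $F_k\colon(\Sigma_k,\gind_k)\to(L,\operatorname{h})$ with $\|A_{F_k}\|\le C$ satisfied $\operatorname{inj}_{\gind_k}(\Sigma_k)\to 0$, rescale the metrics so that this infimum becomes $1$ (attained at some $x_k$ when $\Sigma_k$ is compact); then the rescaled $\|A_{F_k}\|$ and the rescaled intrinsic curvature both tend to $0$, and by Proposition \ref{CG1} the rescaled ambient manifolds converge to $(\real{l},\gind_{\operatorname{euc}},0)$, so (using the cited compactness results for immersions to control the regularity of the limit) the rescaled immersions subconverge to a complete totally geodesic — hence affine — submanifold of $\real{l}$, whose injectivity radius is infinite, contradicting the normalization.
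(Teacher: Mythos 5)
Note first that the paper does not prove this lemma: it is quoted directly from Cooper \cite{cooper} (Corollary~2.1.11) and Chen--Yin \cite{chen2} (Theorem~2.1), so there is no in-paper argument to compare against. Your direct proof is sound and is in the spirit of those sources: the Gau{\ss} equation together with $\|A_F\|\le C$ and the ambient curvature bound controls the intrinsic sectional curvature, hence the conjugate radius of $(\Sigma,\gind)$; the pointed Klingenberg lemma then reduces the problem to bounding from below the length of the shortest $\gind$-geodesic loop at each $p\in\Sigma$; and the normal-coordinate Taylor estimate shows that a curve in $L$ of $\operatorname{h}$-geodesic curvature at most $\|A_F\|\le C$ (which $\sigma=F\circ\gamma$ has, since $\tilde\nabla_{\sigma'}\sigma'=A_F(\gamma',\gamma')$ whenever $\gamma$ is a $\gind$-geodesic) cannot close up too quickly in a bounded-geometry ambient. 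Your opening caveat is also correct and worth making explicit: without a completeness or properness hypothesis on $(\Sigma,\gind)$ the statement is false --- a small flat disk isometrically embedded in $\real{3}$ has $A_F\equiv 0$ and arbitrarily small injectivity radius. In the paper's applications $\Sigma$ is always compact, so no harm is done, but the hypothesis should be read in (and is present in the cited references).

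Two minor cautions. In the Klingenberg step you should record that only the one-sided inequality $\operatorname{inj}_{\gind}(p)\ge\min\{c(p),\tfrac12\ell_{\gind}(p)\}$ is needed, that it holds pointwise for complete $(\Sigma,\gind)$, and that a minimizing geodesic loop realizing $\ell_{\gind}(p)$ exists by an Arzel\`a--Ascoli argument when $\ell_{\gind}(p)<\infty$; these are the facts you silently invoke, and without completeness even the existence of such a loop can fail. More substantively, the alternative blow-up sketch at the end is not self-contained: the compactness theorem for immersions that you propose to quote requires uniform bounds on \emph{all} covariant derivatives $\|(\nabla^{F_k})^jA_{F_k}\|$, whereas in this lemma one has only a zeroth-order bound on $A_{F_k}$. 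One would need an intermediate elliptic regularity step, or a lower-order (say $C^{1,\alpha}$) compactness statement, to make that route rigorous, and one should also be alert to circularity, since the immersion compactness theorem is itself typically proved by first establishing exactly the injectivity radius lower bound at issue here. Your main argument avoids both difficulties, which is why it is the one to keep.
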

The last lemma and the Cheeger-Gromov compactness theorem allow us to obtain a compactness theorem in the category of 
sequences of immersions (see for instance \cite[Theorem 2.0.12]{cooper}).

\begin{theorem}[Compactness for immersions]
Let $\{(\Sigma_k,\gind_k,x_k)\}_{k\in\natural{}}$,
$\{(L_k,\operatorname{h}_k,y_k)\}_{k\in\natural{}}$ be sequences of complete
Riemannian manifolds with dimension $m$ and $l$ respectively. Let $F_k:
(\Sigma_k,\gind_k)\to(L_k,\operatorname{h}_k)$ be a family of isometric immersions with 
$F_k(x_k)=y_k$. Assume that:
\begin{enumerate}[\rm (a)]
\item
Each $\Sigma_k$ is compact.
\medskip
\item
The sequence 
$\{(L_k,\operatorname{h}_k,y_k)\}_{k\in\natural{}}$ has uniformly
bounded geometry.
\medskip
\item
For each integer $j\ge 0$ there exists
a uniform constant $C_j$ such that
$$\|(\nabla^{F_k})^jA_{F_k}\|\le C_j,$$
for any $k\in\natural{}$. Here $A_{F_k}$ stands for the second fundamental form
of the immersion $F_k$.
\end{enumerate}
Then the sequence of immersions $\{F_k\}_{k\in\natural{}}$ subconverges smoothly
to a complete isometric immersion $F_{\infty}:(\Sigma_{\infty},\gind_{\infty},x_{\infty})\to
(L_{\infty},\operatorname{h}_{\infty},y_{\infty})$.
\end{theorem}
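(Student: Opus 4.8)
The plan is to deduce the Compactness Theorem for immersions by combining the Cheeger--Gromov compactness theorem for Riemannian manifolds with the preceding lemma on lower injectivity radius bounds, treating the source metrics and the ambient metrics separately and then checking that the immersions themselves converge. First I would produce a geometric limit for the ambient manifolds: hypothesis (b) says the sequence $\{(L_k,\operatorname{h}_k,y_k)\}_{k\in\natural{}}$ has uniformly bounded geometry, so the Cheeger--Gromov compactness theorem yields a subsequence (which I relabel) converging smoothly in the sense of Cheeger--Gromov to a complete pointed Riemannian manifold $(L_{\infty},\operatorname{h}_{\infty},y_{\infty})$, together with a family of convergence pairs $\{(W_k,\Psi_k)\}_{k\in\natural{}}$.

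Next I would establish uniformly bounded geometry for the source manifolds $(\Sigma_k,\gind_k)$. Boundedness of the curvature and all its covariant derivatives is not immediate from (c) alone, since (c) controls the second fundamental form of $F_k$; however, the Gau{\ss} equation expresses $\operatorname{R}(\gind_k)$ in terms of the pulled-back ambient curvature $F_k^{\ast}\tilde{\operatorname{R}}$ and a quadratic expression in $A_{F_k}$, and differentiating the Gau{\ss}, Codazzi and Ricci equations expresses $(\nabla^{\gind_k})^j\operatorname{R}(\gind_k)$ in terms of $(\nabla^{F_k})^i A_{F_k}$ for $i\le j$ and the ambient curvature and its derivatives along $F_k$. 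Hypotheses (b) and (c) then bound all these quantities uniformly in $k$, giving condition (a) of uniformly bounded geometry for the source. For condition (b), the lower injectivity radius bound, I invoke the lemma just stated: since $(L_k,\operatorname{h}_k)$ all have bounded geometry with uniform constants (hypothesis (b)) and $\|A_{F_k}\|\le C_0$ uniformly by hypothesis (c) with $j=0$, there is a uniform $r>0$ with $\operatorname{inj}_{\gind_k}(\Sigma_k)\ge r$ for all $k$. Hence $\{(\Sigma_k,\gind_k,x_k)\}_{k\in\natural{}}$ has uniformly bounded geometry, and the Cheeger--Gromov compactness theorem produces a further subsequence converging smoothly to a complete pointed Riemannian manifold $(\Sigma_{\infty},\gind_{\infty},x_{\infty})$ with a family of convergence pairs $\{(U_k,\Phi_k)\}_{k\in\natural{}}$.

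It remains to extract a limiting immersion. For each $k$ in the surviving subsequence, consider the composition $\Psi_k^{-1}\circ F_k\circ\Phi_k$, which is defined on progressively larger relatively compact subsets of $\Sigma_{\infty}$ (one checks that $F_k\circ\Phi_k(U_k)\subset\Psi_k(W_k)$ for large $k$ by using completeness of the limits and that $F_k$ maps bounded sets to bounded sets, the latter again from the uniform bound on $\|A_{F_k}\|$ together with the compactness of $\Sigma_k$). These compositions are maps between fixed manifolds $\Sigma_{\infty}$ and $L_{\infty}$ equipped with fixed limiting metrics; the uniform bounds on all covariant derivatives of $A_{F_k}$, pulled through the convergence $\Phi_k^{\ast}\gind_k\to\gind_{\infty}$ and $\Psi_k^{\ast}\operatorname{h}_k\to\operatorname{h}_{\infty}$, give uniform $C^{p}$ bounds on $\Psi_k^{-1}\circ F_k\circ\Phi_k$ on every compact set, for every $p$. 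By Arzel\`a--Ascoli and a diagonal argument over an exhaustion of $\Sigma_{\infty}$, a subsequence converges in $C^{\infty}$ on compact sets to a smooth map $F_{\infty}:\Sigma_{\infty}\to L_{\infty}$ with $F_{\infty}(x_{\infty})=y_{\infty}$; passing the isometry relation $F_k^{\ast}\operatorname{h}_k=\gind_k$ to the limit shows $F_{\infty}^{\ast}\operatorname{h}_{\infty}=\gind_{\infty}$, so $F_{\infty}$ is an isometric immersion, and completeness of $(\Sigma_{\infty},\gind_{\infty})$ makes it a complete one. This is exactly the asserted smooth subconvergence in the category of immersions.

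The main obstacle is the step deriving uniformly bounded geometry of the source from hypotheses (b) and (c): one must be careful that the Gau{\ss}--Codazzi--Ricci machinery, when iterated, never requires control of any geometric quantity that is not already bounded --- in particular that no uncontrolled ambient curvature derivative or uncontrolled $\nabla^{F_k}$-derivative of $A_{F_k}$ of too high an order sneaks in --- and that the injectivity radius bound from the cited lemma genuinely applies, i.e.\ that the constants in the bounded geometry of the $(L_k,\operatorname{h}_k)$ are uniform in $k$, which is precisely what hypothesis (b) guarantees. Once these two structural points are in place, the rest is the standard Arzel\`a--Ascoli/diagonal extraction packaged by the Cheeger--Gromov framework recalled above.
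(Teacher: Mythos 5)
Your proposal is correct and follows exactly the route the paper intends: the paper does not give the proof itself but states that the preceding injectivity-radius lemma together with the Cheeger--Gromov compactness theorem yield the result (citing Cooper's thesis for details), and your argument --- ambient Cheeger--Gromov limit from (b), source curvature bounds via Gau{\ss}--Codazzi--Ricci plus the injectivity-radius lemma for a source Cheeger--Gromov limit, then Arzel\`a--Ascoli to extract the limiting immersion --- is precisely the standard unpacking of that statement.
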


\subsection{Modeling the singularities}
The next theorem shows how one can built smooth singularity models for the
mean curvature flow by rescaling properly around points where the second
fundamental form attains its maximum. The proof relies on the compactness theorem
of Cheeger-Gromov 
and on the compactness theorem for immersions. For more details see
\cite[Theorem 2.4 and Proposition 2.5]{chen}.

\begin{theorem}[Blow-up limit]\label{blow}
Let $\Sigma$ be a compact manifold and
$F:\Sigma\times [0,T)\to (L,\operatorname{h})$ be a solution of mean curvature 
flow, where $L$ is a Riemannian manifold with bounded geometry and $T\le\infty$ is the
maximal time of existence. Suppose that there exists a sequence of points 
$\{(x_k,t_k)\}_{k\in\natural{}}$
in $\Sigma\times[0,T)$ with $\lim t_k=T$ and such that the sequence 
$\{a_k\}_{k\in\natural{}}$, where
$$a_k:=\max_{(x,t)\in\Sigma\times[0, t_k]}\|A(x,t)\|=\|A(x_k,t_k)\|,$$
tends to infinity. Then:
\begin{enumerate}[\rm (a)]
\item
The maps
$F_k:\Sigma\times[-a^{2}_kt_k,0]
\to(L,a^2_k\operatorname{h})$, $k\in\natural{}$,
given by
$$F_k(x,s):=F_{k,s}(x):=F(x,{s}/{a^{2}_k}+t_k),$$
form a sequence of mean curvature flow solutions. Moreover,
$$\|A_{F_{\infty}}\|\le 1\quad\text{and}\quad \|A_{F_{\infty}}(x_{\infty},0)\|=1.$$
\item
For any fixed $s\le 0$, the sequence $\{(\Sigma,F_{k,s}^*(a^2_k\operatorname{h}),x_k)\}_{k\in\natural{}}$
smoothly subconverges to a complete Riemannian manifold
$(\Sigma_{\infty},\gind_{\infty},x_{\infty})$ that does not depend on the choice of $s$. Moreover, the
sequence of pointed manifolds
$\{(L,a^2_k\operatorname{h},F_k(x_k,s))\}_{k\in\natural{}}$ smoothly subconverges
to the standard euclidean space $(\real{l},\gind_{\operatorname{euc}},0)$.
\medskip
\item
There is a mean curvature flow $F_{\infty}:\Sigma_{\infty}\times(-\infty,0]\to\real{l}$,
such that for each fixed time $s\le 0$, the sequence $\{F_{k,s}\}_{k\in\natural{}}$
smoothly subconverges to $F_{\infty, s}$. This convergence is uniform with respect to the parameter $s$.
Additionally,
$$\|A_{F_{\infty}}\|\le 1\quad\text{and}\quad \|A_{F_{\infty}}(x_{\infty},0)\|=1.$$
\item
If $\dim\Sigma=2$ and $H_{F_{\infty}}=0$, then the limiting Riemann surface $\Sigma_{\infty}$ has finite total curvature.
In the matter of fact, $\Sigma_{\infty}$
is conformally diffeomorphic to a compact Riemann surface minus a finite number of points and is of parabolic
type.
\end{enumerate}
\end{theorem}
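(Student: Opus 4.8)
The plan is to carry out the standard type-II blow-up construction for the mean curvature flow, as in \cite[Theorem 2.4 and Proposition 2.5]{chen}, and to supplement it with a curvature-integral argument for part~(d). \emph{Part (a)} is essentially bookkeeping about parabolic rescaling: multiplying the ambient metric by the constant $a_k^2$ leaves the Levi-Civita connection, hence the second fundamental tensor $A$ (regarded as a normal-bundle valued symmetric bilinear form), unchanged, while the induced metric scales by $a_k^2$, the volume density by $a_k^{m}$, and the mean curvature vector by $a_k^{-2}$; combined with the time change $s=a_k^2(t-t_k)$ this gives $\partial_sF_k=H_{F_k}$ for the mean curvature of $F_{k,s}$ in $(L,a_k^2\operatorname{h})$, so each $F_k$ is a mean curvature flow on $\Sigma\times[-a_k^2t_k,0]$. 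The same scaling gives $\|A_{F_k}\|(x,s)=a_k^{-1}\|A\|(x,s/a_k^2+t_k)$, so the definition $a_k=\max_{\Sigma\times[0,t_k]}\|A\|=\|A(x_k,t_k)\|$ translates into $\|A_{F_k}\|\le 1$ on $\Sigma\times[-a_k^2t_k,0]$ with equality at $(x_k,0)$; the asserted bounds for $A_{F_\infty}$ then pass to the limit once (b)--(c) are in place.

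\emph{Parts (b) and (c).} Since $(L,\operatorname{h})$ has bounded geometry and $a_k\to\infty$, Proposition~\ref{CG1} gives directly that $(L,a_k^2\operatorname{h},F_k(x_k,s))$ subconverges smoothly to $(\real{l},\gind_{\operatorname{euc}},0)$. For the domains I need $(\Sigma,F_{k,s}^*(a_k^2\operatorname{h}),x_k)$ to have uniformly bounded geometry: the Gau{\ss}--Codazzi equations bound the curvature of the induced metric and all its covariant derivatives by the (small) ambient curvature of $a_k^2\operatorname{h}$ together with $\|A_{F_k}\|$, $\|\nabla^{\perp}A_{F_k}\|,\dots$, and the latter are bounded uniformly in $k$ and $s$ by the interior smoothing estimates of the mean curvature flow, which apply because $\|A_{F_k}\|\le 1$ on intervals $[-a_k^2t_k,0]$ whose lengths $a_k^2t_k$ tend to $\infty$; the injectivity radii are bounded below uniformly by the injectivity-radius lemma for isometric immersions into bounded-geometry targets applied to the $F_{k,s}$. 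Hence the Cheeger--Gromov compactness theorem applies, and, together with these uniform bounds $\|(\nabla^{F_{k,s}})^jA_{F_{k,s}}\|\le C_j$ and the ambient convergence, so does the compactness theorem for immersions; a diagonal argument over a countable dense set of times $s\le 0$ extracts one subsequence along which $F_{k,s}$ subconverges for all such $s$, and the limit manifold is independent of $s$ because $\|\partial_s(F_{k,s}^*(a_k^2\operatorname{h}))\|\le 2\|H_{F_k}\|\,\|A_{F_k}\|\le C$ makes the rescaled metrics uniformly comparable for $s$ in any compact interval. Passing the equation $\partial_sF_{k,s}=H_{F_{k,s}}$ to the smooth limit shows that $F_\infty:\Sigma_\infty\times(-\infty,0]\to\real{l}$ is a mean curvature flow, and $\|A_{F_\infty}\|\le 1$ and $\|A_{F_\infty}(x_\infty,0)\|=1$ follow from the $C^\infty$ convergence at the base point.

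\emph{Part (d).} Because $F_\infty$ is minimal in $\real{l}$, the Gau{\ss} equation with vanishing ambient curvature and $H_{F_\infty}=0$ gives $2\sigma_{\Sigma_\infty}=-\|A_{F_\infty}\|^2$, and $\|A_{F_\infty}(x_\infty,0)\|=1$ shows $\Sigma_\infty$ is not flat, so finite total curvature amounts to $\int_{\Sigma_\infty}\|A_{F_\infty}\|^2\,d\mu_\infty<\infty$. In dimension $m=2$ the quantity $\int_\Sigma\|A_{F_{k,s}}\|^2\,d\mu_{k,s}$ is scale invariant and equals $\int_{M_t}\|A\|^2\,d\mu_t$ at the original time $t=s/a_k^2+t_k\in[0,t_k]$; writing the Gau{\ss} equation as $\|A\|^2=2\overline{\sigma}+\|H\|^2-2\sigma_{\gind}$, with $\overline{\sigma}$ the ambient sectional curvature of the tangent plane, integrating over the closed surface $M_t$ and using Gau{\ss}--Bonnet gives $\int_{M_t}\|A\|^2\,d\mu_t=2\int_{M_t}\overline{\sigma}\,d\mu_t+\int_{M_t}\|H\|^2\,d\mu_t-4\pi\chi(\Sigma)$, which is uniformly bounded since $\overline{\sigma}$ is bounded (bounded geometry), the area is non-increasing along the flow, and $\|H\|^2\le C$ by Theorem~\ref{estmean}. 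Fatou's lemma and the local smooth convergence then yield $\int_{\Sigma_\infty}\|A_{F_\infty}\|^2\,d\mu_\infty<\infty$, so $\Sigma_\infty$ has finite total curvature; as it carries no compact minimal surface, the theorems of Huber and Osserman (and Chern--Osserman) identify it conformally with a compact Riemann surface minus a nonempty finite set of points, hence of finite topology and of parabolic conformal type.

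\emph{Main obstacle.} The technical core lies in (b)--(c): obtaining the higher-order estimates $\|(\nabla^{F_{k,s}})^jA_{F_{k,s}}\|\le C_j$ uniformly in $k$ and $s$ from the smoothing estimates — which forces one to exploit that the rescaled flows live on time intervals of length $a_k^2t_k\to\infty$ — and checking that $\Sigma_\infty$ is genuinely $s$-independent. In (d) the delicate point is the \emph{uniform} bound on $\int_{M_t}\|A\|^2\,d\mu_t$, which is precisely where Theorem~\ref{estmean} and the area-decreasing hypotheses enter, and then transferring finite total curvature to the limit and invoking the Huber--Osserman classification.
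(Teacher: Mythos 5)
Parts (a), (b), (c) of your proposal are essentially the standard Chen--He type-II blow-up argument and are correct. The parabolic-rescaling bookkeeping in (a), the use of the injectivity-radius lemma for immersions into uniformly bounded-geometry targets, the interior smoothing estimates for higher derivatives of $A$ (which indeed require the rescaled time intervals $[-a_k^2t_k,0]$ to have length tending to infinity), the diagonalisation over a dense set of times, and the $s$-independence of $\Sigma_\infty$ via $\|\partial_s g_{k,s}\|\le 2\|H_{F_k}\|\,\|A_{F_k}\|\le C$ are all sound and correspond to what the paper cites from \cite[Theorem 2.4 and Proposition 2.5]{chen}.

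Part (d) has a genuine gap with respect to the statement as it appears in the paper. Theorem \ref{blow} is formulated for an \emph{arbitrary} mean curvature flow of a compact surface into a bounded-geometry target: no area-decreasing or curvature-comparison hypothesis is imposed. Your finiteness of $\int_{\Sigma_\infty}\|A_{F_\infty}\|^2$ reduces via the scale-invariance of $\int\|A\|^2$ in dimension two and the Gau\ss--Bonnet identity
\begin{equation*}
\int_{M_t}\|A\|^2\,d\mu_t=\int_{M_t}\|H\|^2\,d\mu_t+2\int_{M_t}\overline{\sigma}\,d\mu_t-4\pi\chi(\Sigma)
\end{equation*}
to a uniform-in-$t$ bound on the Willmore energy $\int_{M_t}\|H\|^2\,d\mu_t$, and you obtain this from the pointwise bound $\|H\|^2\le C$ of Theorem \ref{estmean}. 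But Theorem \ref{estmean} is proved only under the area-decreasing and $\min\sigma_M\ge\sup\sigma_N$ hypotheses, which are not hypotheses of Theorem \ref{blow}. In general, the only a priori information on the Willmore energy coming from the flow is the $L^1$-in-time bound $\int_0^T\!\int_{M_t}\|H\|^2\,d\mu_t\,dt=\operatorname{Area}(M_0)-\lim_{t\to T}\operatorname{Area}(M_t)$, which does not give a uniform $L^\infty$-in-time bound (and Gronwall on $\frac{d}{dt}\int\|H\|^2$ costs a factor involving $\|A\|^2_{L^\infty}$, which is exactly what blows up). So to prove part (d) in the stated generality one needs the argument of \cite{chen} (or another mechanism) for bounding the Willmore energy along the blow-up sequence, rather than an appeal to Theorem \ref{estmean}. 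You correctly flag that this is where the area-decreasing hypotheses enter your argument, and your version of (d) does suffice for the paper's sole use of Theorem \ref{blow} inside Theorem \ref{blow1} (where $\|H\|$ is uniformly bounded), but as a proof of the theorem \emph{as stated} the finite-total-curvature step is incomplete. The subsequent appeal to Huber and Chern--Osserman to get the conformal type, and the Fatou-type passage of the $L^2$-bound on $A$ to the limit, are fine once finite total curvature is established.
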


\subsection{Long time existence and convergence}
Now we shall prove that under our assumptions the graphical mean curvature flow exists for all
time and smoothly converges.

\begin{theorem}\label{blow1}
Let $(M,\gm)$ and $(N,\gn)$ be Riemann surface as in Theorem \ref{thma}
and let $f:M\to N$ be a
strictly area decreasing map. Evolve the graph of $f$ under the mean curvature flow.
Then the norm of the second fundamental form of the evolved graphs stays uniformly
bounded in time and the graphs smoothly converge to a
minimal surface $M_{\infty}$ of $M\times N$.
\end{theorem}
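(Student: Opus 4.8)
The plan is to argue by contradiction, using the blow-up machinery of Theorem~\ref{blow} together with the a priori estimates already established. Suppose the norm of the second fundamental form does \emph{not} stay uniformly bounded on $[0,T_g)$. Then we may pick a sequence of points $(x_k,t_k)$ realizing the spatial-temporal maximum of $\|A\|$ on $[0,t_k]$ with $a_k:=\|A(x_k,t_k)\|\to\infty$ and $t_k\to T_g$. Applying Theorem~\ref{blow} produces a blow-up limit: a nontrivial eternal mean curvature flow $F_\infty:\Sigma_\infty\times(-\infty,0]\to\real{4}$ in Euclidean space, with $\|A_{F_\infty}\|\le 1$ and $\|A_{F_\infty}(x_\infty,0)\|=1$, so in particular $F_\infty$ is \emph{not} totally geodesic. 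The first thing I would check is that the blow-up is still graphical and strictly area decreasing in a suitable sense: the ambient curvature terms disappear after rescaling, and by Lemma~\ref{crucial} the quantity $\rho=u_1^2-u_2^2$ is bounded below by a strictly positive constant on any finite time interval $[t_k-T_0/a_k^2,t_k]$ as $k\to\infty$ — one must be slightly careful here because in the case $\sigma<0$ the lower bound in Lemma~\ref{crucial}(b) decays, but on the rescaled time scale $s\in[-a_k^2 t_k,0]$ the relevant times $t$ approach $T_g<\infty$ (if $T_g<\infty$) and stay bounded away from $0$, so $\rho$ stays uniformly positive along the sequence. Hence the limit satisfies $u_1-|u_2|\ge c>0$, i.e.\ it is a complete \emph{strictly area decreasing} self-similar-type surface in $\real{4}$, and in fact it is \emph{minimal}: by Theorem~\ref{estmean}(a) the mean curvature $\|H\|$ of the flow is uniformly bounded, so after rescaling by $a_k\to\infty$ the mean curvature of $F_\infty$ vanishes identically, $H_{F_\infty}\equiv 0$.

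Now the endgame. The limit $F_\infty$ is a complete minimal surface in $\real{4}$ which is a strictly area decreasing graph — equivalently, its Gau{\ss} map (or K\"ahler angles $\varphi,\vartheta$) stay uniformly bounded away from the degenerate values. Such a surface must be an affine plane. The cleanest way to see this is via the evolution equation of Lemma~\ref{jacobians} (or rather its Euclidean, static specialization): with $\sigma_M=\sigma_N=0$ one gets $\Delta\varphi=\big(\|A\|^2-2\sigma^\perp\big)\varphi$ and $\Delta\vartheta=\big(\|A\|^2+2\sigma^\perp\big)\vartheta$ on $\Sigma_\infty$ (using $H=0$, so $\partial_t=0$ on the limit), whence $\Delta(\varphi\vartheta)=\rho\big(2\|A\|^2\big)-2\langle\nabla\varphi,\nabla\vartheta\rangle$ and, combining as in the proof of Lemma~\ref{crucial}, $\Delta\log\rho\ge \|A\|^2\ge 0$ wherever $\|A\|>0$, i.e.\ $\log\rho$ is subharmonic and bounded above (since $\rho\le 1$) on the complete surface $\Sigma_\infty$. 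By part~(d) of Theorem~\ref{blow}, since $\dim\Sigma_\infty=2$ and $H_{F_\infty}=0$, the surface $\Sigma_\infty$ is parabolic. A bounded subharmonic function on a parabolic Riemann surface is constant, so $\rho$ is constant; feeding this back into the subharmonicity inequality $\Delta\log\rho\ge\|A\|^2$ forces $\|A\|^2\equiv 0$. This contradicts $\|A_{F_\infty}(x_\infty,0)\|=1$. Therefore $\|A\|$ is uniformly bounded on $[0,T_g)$, and by the standard continuation criterion for mean curvature flow (bounded $\|A\|$ together with the bounded-geometry hypothesis on $N$, via the compactness theorem for immersions), the graphical flow exists for all time: $T_g=T=\infty$.

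For the convergence statement, I would argue as follows. The uniform bound $\|A\|\le C$, bootstrapped through the evolution equations to uniform bounds on all covariant derivatives $\|(\nabla^{F_t})^j A\|\le C_j$ (standard interior estimates for mean curvature flow, using that the ambient $M\times N$ has bounded geometry), gives precompactness of the family $\{\Gamma(f_t)\}$ in $C^\infty$. Next, the monotone quantity: by Lemma~\ref{jacobians} with $\sigma_M\ge 0\ge\sigma_N$ removed — more precisely, integrating the evolution of $\rho$ (or using Theorem~\ref{estmean}(b) when $\sigma\ge 0$ and a separate argument when $\sigma<0$) — one shows $\int_0^\infty\!\!\int_M\|H\|^2\,\Omega_M\,dt<\infty$, or at least that $\|H\|\to 0$ in an appropriate sense along a subsequence of times $t_k\to\infty$. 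Combined with the $C^\infty$-precompactness, any such subsequential limit is a smooth graph with $H\equiv 0$, i.e.\ a minimal surface $M_\infty$ in $M\times N$. Finally, to upgrade subsequential to full convergence, one uses that the flow is a gradient-type flow (the area functional is non-increasing) together with a \L ojasiewicz–Simon type argument, or — since in dimension two the relevant structure is rigid — uniqueness of the limit via the strict area decreasing bound and the strong maximum principle applied to $\rho$, which prevents oscillation.

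\medskip

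The step I expect to be the main obstacle is establishing the \emph{uniform} positivity of $\rho$ along the blow-up sequence in the case $\sigma<0$: Lemma~\ref{crucial}(b) only gives $\rho\ge c_0 e^{2\sigma t}/\sqrt{1+c_0^2 e^{4\sigma t}}$, which degenerates as $t\to\infty$, so if a priori $T_g=\infty$ were possible with $\|A\|$ unbounded one would need the blow-up times $t_k$ to stay \emph{bounded} — equivalently, one first has to rule out a finite-time singularity (where $\rho$ is automatically bounded below by the estimate) and then handle the "type II at infinity" scenario, where $\rho$ could in principle tend to $0$. Resolving this cleanly likely requires showing first, independently, that $T_g=\infty$ and that $\rho$ is in fact bounded below by a positive constant \emph{uniformly in time} (not just on finite intervals) — which should follow by feeding the curvature decay estimates of Theorem~\ref{thmb}/Theorem~\ref{estmean} back into Lemma~\ref{evzeta} with a cleverly chosen $\eta$ — before running the blow-up contradiction.
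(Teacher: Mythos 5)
You have the right framework (blow-up via Theorem~\ref{blow}, minimality of the limit from the uniform bound on $\|H\|$, and the parabolicity argument to force flatness), and you correctly isolate the danger zone: when $\sigma<0$ and $T_g=\infty$, Lemma~\ref{crucial}(b) gives no uniform lower bound for $\rho$, so the blow-up limit could have $\rho_\infty=0$ somewhere. But the remedy you sketch --- first establish a uniform positive lower bound on $\rho$ by feeding the decay estimates of Theorems~\ref{thmb}/\ref{estmean} into Lemma~\ref{evzeta} --- is both circular and, more to the point, not what actually happens. Theorem~\ref{thmb}(c) (the bound $\|A\|^2\le C$) is itself a consequence of Theorem~\ref{blow1}, so you cannot invoke it here; Theorem~\ref{estmean} only bounds $\|H\|$, which does not dominate the $\|A\|^2$-term driving the evolution of $\rho$; and more fundamentally the paper does \emph{not} prove a uniform lower bound on $\rho$ in the case $\sigma<0$, precisely because the largest singular value of $f_t$ may tend to infinity along the flow, forcing $\varphi,\vartheta\to 0$. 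The paper's actual argument embraces this possibility: if $\operatorname{Jac}(f_{t})\to\pm 1$ somewhere, one shows via the strong maximum principle applied to $\Delta\varphi_\infty+(\|A_{F_\infty}\|^2-2\sigma^\perp_{F_\infty})\varphi_\infty=0$ that $\varphi_\infty\equiv 0$, so $F_\infty$ is a complete minimal \emph{Lagrangian} immersion in $\real{4}$; one then passes to the explicit Aiyama-type holomorphic representation, observes that the Gau{\ss} map $\mathcal{G}$ of a Lagrangian graph over an area-preserving map satisfies $|\mathcal{G}|^2\le 2$ and hence omits an open subset of $\mathbb{C}\cup\{\infty\}$, and invokes Osserman's theorem to conclude $F_\infty$ is flat --- again a contradiction. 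This is a genuinely different (and essential) mechanism that your proposal does not supply, so the proof is incomplete as it stands.

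Two smaller points. First, a sign slip: the Euclidean specialization of Lemma~\ref{jacobians} reads $\Delta\varphi_\infty+\{\|A_{F_\infty}\|^2-2\sigma^\perp_{F_\infty}\}\varphi_\infty=0$, so $\varphi_\infty$ and $\vartheta_\infty$ are \emph{super}harmonic (positive, hence constant on a parabolic surface), not subharmonic as you wrote, and $\Delta\log\rho\le -2\|A_{F_\infty}\|^2\le 0$; your argument survives after flipping inequalities, but the conclusion ``constant, hence $\|A_{F_\infty}\|\equiv 0$'' is what the paper uses. Second, your convergence paragraph is in the same spirit as the paper (which simply cites Simon's convergence theorem \cite{simon}); the \L ojasiewicz--Simon mechanism you invoke \emph{is} Simon's theorem, so no disagreement there.
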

\begin{proof}
Suppose to the contrary that $\|A\|$ is not uniformly bounded. Then
there exists a sequence of points 
$\{(x_k,t_k)\}_{k\in\natural{}}$
in $M\times[0,T)$ with $\lim t_k=T$ with
$$a_k:=\max_{(x,t)\in M \times[0, t_k]}\|A(x,t)\|=\|A(x_k,t_k)\|,$$
and such that the sequence $\{a_k\}_{k\in\natural{}}$
tends to infinity. Now perform scalings as in Theorem \ref{blow}.
A direct computation shows that the mean curvature vector $H_{k}$ of
$F_k$ is related to the mean curvature $H$ of $F$ by
$$H_{k}(x,s)=a^{-2}_kH(x,s/a^{2}_k+t_k),$$
for any $(x,s)\in M\times[-a^{2}_kt_k,0].$
Let $F_{\infty}:\Sigma_{\infty}\times(-\infty,0]\to\real{4}$ be the blow-up flow of
Theorem \ref{blow}.
Since the norm $\|H\|$ is uniformly bounded and the convergence is smooth,
we deduce
that $F_{\infty}:\Sigma_{\infty}\to\real{4}$ must be a complete
minimal immersion. We will distinguish two cases:

{\bf Case A.} Suppose at first that $\sigma\ge 0$. In this case, due to Lemma \ref{crucial} the 
singular values of the
time dependend map $f_t:M\to N$ remain uniformly bounded in time and
$\operatorname{Jac}(f_t)$ remains uniformly bounded by $1$. Recall now from Theorem
\ref{blow}(d) that the Riemann surface $\Sigma_{\infty}$ is parabolic. Consequently,
any positive superharmonic function must be constant. Now observe that the 
corresponding K\"ahler  angles $\varphi_{\infty}$, $\vartheta_{\infty}$ of $F_{\infty}$ with 
respect to the complex structures
$J=(J_{\real{2}},-J_{\real{2}})$ and $J_2=(J_{\real{2}},J_{\real{2}})$
of $\real{4}$ are strictly positive. Moreover, as in Lemma \ref{jacobians} we get that
\begin{eqnarray}
&&\Delta\varphi_{\infty}+\{\|A_{F_{\infty}}\|^2-2\sigma^{\perp}_{F_{\infty}}\}\varphi_{\infty}=0\label{eq philim}\\
&&\Delta\vartheta_{\infty}+\{\|A_{F_{\infty}}\|^2+2\sigma^{\perp}_{F_{\infty}}\}\vartheta_{\infty}=0,\label{eq thetalim}
\end{eqnarray}
where $-\sigma^{\perp}_{F_{\infty}}$ is the normal curvature of $F_{\infty}$.
Note that from equation (\ref{sigmaperp}) one can easily derive the inequalities
$$\|A_{F_{\infty}}\|^2\pm2\sigma^{\perp}_{F_{\infty}}\ge 0.$$
Hence, $\varphi_{\infty}$ and $\vartheta_{\infty}$ must be non-zero positive constants
and $\|A_{F_{\infty}}\|=0$. This contradicts the fact that there is a point where 
$\|A_{F_{\infty}}\|=1$. This completes Case A.

{\bf Case B.} Let us investigate the case where $\sigma<0$. If $T<\infty$, then by the
estimate done in Lemma \ref{crucial}(b) we see that
the singular values of $f_t$ are uniformly bounded
from above and we can
proceed exactly in the same way as in Case A. So let us assume from now on that 
$T=\infty$. In this case the biggest singular
value of the evolving maps $f_t:M\to N$ might tend to $+\infty$ as $t$ tends to $+\infty$ while $\operatorname{Jac}(f_t)<1$
for any $t\ge 0$. 

There are only to possibilities concerning the Jacobian of the limiting minimal immersion.
The first option is
$$\lim_{t\to+\infty}\operatorname{Jac}^2(f_t)(x)<1,$$
or, equivalently,
$$\lim_{t\to+\infty}u_1^2(x,t)>\lim_{t\to+\infty} u^2_2(x,t)\ge 0, $$
for any $x\in M$. Since the convergence is smooth, proceeding as in Case A we deduce that the immersion $F_{\infty}$ is flat, which is absurd.

The next possibility is that there is a point $x_0\in M$ such that
$$\lim_{t\to+\infty}\operatorname{Jac}(f_t)(x_0)=\pm 1.$$
Suppose that the above limit is equal to $1$. The case where the limit is $-1$ is treated
in a similar way. 

{\bf Claim.} {\it The limiting surface is Lagrangian and $\lim_{t\to\infty}\operatorname{Jac}(f_t)=1$ uniformly.}

{\it Proof of the claim.}
 Let $\varphi_{\infty}$ be the K\"ahler angle of $F_{\infty}$ with respect to the complex structure
$J=(J_{\real{2}},-J_{\real{2}})$
of $\real{4}$, where by $J_{\real{2}}$ we denote the standard complex structure of $\real{2}$. Since
the convergence is smooth we deduce that $\varphi_{\infty}\ge 0$ and that there exists a point where $\varphi_{\infty}$
becomes zero. Recall that $\varphi_{\infty}$ satisfies the partial differential equation
$$\Delta\varphi_{\infty}+\{\|A_{F_{\infty}}\|^2-2\sigma^{\perp}_{F_{\infty}}\}\varphi_{\infty}=0,$$
where $-\sigma^{\perp}_{F_{\infty}}$ is the normal curvature of $F_{\infty}$.
Since there is a point where $\varphi_{\infty}$ vanishes, from the strong maximum principle we deduce that
$\varphi_{\infty}$ must be identically zero. Consequently,
$F_{\infty}:\Sigma_{\infty}\to\real{4}$ must be a complete minimal Lagrangian immersion.
From the relations $\varphi_\infty=(u_1)_\infty-(u_2)_\infty$, $\vartheta_\infty=(u_1)_\infty+(u_2)_\infty$ and equations (\ref{eq philim}), (\ref{eq thetalim}) we then see that $(u_1)_\infty=(u_2)_\infty=const.$ If this constant is
non-zero, then certainly $\lim_{t\to\infty}\operatorname{Jac}(f_t)=1$ uniformly, because
$\operatorname{Jac}(f_t)=u_2/u_1$.

On the other hand, if that constant is zero, then
$\varphi_\infty=\vartheta_\infty=0$ and the equations for the gradients of $\varphi_{\infty}$, $\vartheta_{\infty}$,
$$\|\nabla\varphi_\infty\|^2
=(1-\varphi_\infty^2)\big\{\big[(A_{F_\infty})^3_{11}+(A_{F_\infty})^4_{12}\big]^2+\big[(A_{F_\infty})^3_{12}-(A_{F_\infty})^4_{11}\big]^2\big\}$$
$$\|\nabla\vartheta_\infty\|^2
=(1-\vartheta_\infty^2)\big\{\big[(A_{F_\infty})^3_{11}-(A_{F_\infty})^4_{12}\big]^2+\big[(A_{F_\infty})^3_{12}+(A_{F_\infty})^4_{11}\big]^2\big\}$$
imply that $A_{F_\infty}$ vanishes identically, which is a contradiction to the fact that $\|A_{F_{\infty}}\|$ attains the value $1$ somewhere. Hence the limit
of the Jacobian is $1$ everywhere. This completes the proof of the claim.

It is well known (see for example \cite{chen1} or \cite{lagrangian}) that
minimal Lagrangian surfaces in $\complex{2}$ are holomorphic curves with respect 
to one of the complex structures of $\complex{2}$. In the matter of
fact (see for instance \cite{aiyama}) we can explicitly locally
reparametrize the minimal Lagrangian immersion $F_{\infty}$ in the form
$$F_{\infty}=\frac{1}{\sqrt{2}}e^{i\beta/2}\big(\mathcal{F}_1-i\overline{\mathcal{F}_2},\mathcal{F}_2+i\overline{\mathcal{F}_1}\big),$$
where $\beta$ is a constant and $\mathcal{F}_1$, $\mathcal{F}_2:\mathbb{D}\subset\complex{}\to\complex{}$ are holomorphic functions defined in a simply connected domain $\mathbb{D}$ such that
$$|(\mathcal{F}_1)_z|^2+|(\mathcal{F}_2)_z|^2>0.$$
The Gau{\ss} image of $F_{\infty}$ lies in the slice $\mathbb{S}^{2}\times\{(e^{i\beta},0)\}$
of $\mathbb{S}^2\times\mathbb{S}^2$. In the matter of fact all the information on the Gau{\ss} image of $F_{\infty}$
is encoded in the map $\mathcal{G}:\mathbb{D}\to\mathbb{S}^{2}=\complex{}\cup\{\infty\}$ given by
$$\mathcal{G}={(\mathcal{F}_1)_z}/{(\mathcal{F}_2)_z}.$$
In the case where the immersion $F_{\infty}$ was the graph of an area preserving map $f:\complex{}\to\complex{}$, then
$$\mathcal{F}_1=({z+i\overline{f}})/2,\quad\mathcal{F}_2
=(-i\overline{z}+f)/2\quad\text{and}\quad |f_z|^2-|f_{\bar{z}}|^2=1.$$
Therefore
$$\mathcal{G}={(\mathcal{F}_1)_z}/{(\mathcal{F}_2)_z}=(1-i{f}_{\bar z})/{f_z}.$$
A straightforward computation shows that
$$|\mathcal{G}|^2=\frac{\big|1+i\overline{f_{\bar z}}\big|^2}{|f_z|^2}
=\frac{1+|f_{\bar z}|^2
+i\big(\overline{f_{\bar z}}-f_{\bar z}\big)}{1+|f_{\bar z}|^2}
=1+\frac{2\operatorname{Im}(f_{\bar z})}{1+|f_{\bar z}|^2}\le 2.$$
In this case the image of $\mathcal{G}$ is contained in a bounded subset of $\complex{}\cup\{\infty\}$.
On the other hand, recall that
$F_{\infty}$ arises as a smooth limit
of graphical surfaces generated by maps whose Jacobians uniformly approaches the value $1$.
Consequently, also in the general case, the Gau{\ss} image of $F_{\infty}$ omits an open subset of $\mathbb{C}\cup\{\infty\}$.
But then, due to a result of Osserman \cite[Theorem 1.2]{osserman} the immersion $F_{\infty}$ must be
flat, which is a contradiction. This completes Case B.

Since the norm of the second fundamental form is uniformly bounded in time, the graphical mean curvature flow exists for all time
and, moreover, due to the general convergence theorem of Simon
\cite[Theorem 2]{simon}, it smoothly converges to a compact 
minimal surface $M_{\infty}$ of the product space $M\times N$. This completes the proof of the 
theorem.
\end{proof}

\begin{remark}
In the case where $F_{\infty}(\Sigma_{\infty})$ is an entire minimal graph, in the proof of the above theorem,
one could use the Bernstein type theorems proved by Hasanis, Savas-Halilaj
and Vlachos in \cite{hasanis1,hasanis2} to show flatness of $F_{\infty}$.
\end{remark}

\section{Proof of Theorem B}

In this section we will prove the decay estimates claimed in Theorem $B$. Let us start by proving the following auxiliary lemma.

\begin{lemma}\label{evsecond}
Let $f:(M,\gm)\to(N,\gn)$ be an area decreasing map, where $M$ and $N$ are Riemann surfaces as
in Theorem \ref{thma}. Suppose that $\sigma:=\min\sigma_M>0$. Consider the time dependent function $g$ given by
$$g:=\log(t\|A\|^2+1).$$
Then $g$ satisfies the following inequality
\begin{eqnarray*}
&&\dt g-\Delta g\le 3\|A\|^2+\frac{1}{2}\|\nabla g\|^2+C(1+\sqrt{t})\sqrt{1-\rho^2},
\end{eqnarray*}
where $C$ is a positive real constant.
\end{lemma}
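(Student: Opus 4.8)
The plan is to start from the known evolution equation for $\|A\|^2$ under mean curvature flow in an ambient manifold with bounded geometry, namely an inequality of Simons type
$$\dt\|A\|^2-\Delta\|A\|^2\le -2\|\nabla A\|^2+c_1\|A\|^4+c_2\|A\|^2+c_3,$$
where the lower-order terms come from the ambient curvature of $M\times N$ and its covariant derivatives (here we use that $M$ is compact and $N$ has bounded geometry, so $\riem$ of $M\times N$ is bounded together with all its derivatives). The crucial point, and the one special to the present setting, is that the quartic term $c_1\|A\|^4$ must be \emph{absorbed}: a naive estimate only gives $\dt g-\Delta g\le c_1\|A\|^2\cdot(t\|A\|^2)/(t\|A\|^2+1)+\cdots$, which is not good enough because the factor multiplying $\|A\|^2$ is not bounded by the constant $3$ we are promised. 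The way to beat the quartic term is to couple with the function $\rho$ via Lemma \ref{crucial}: since $\sigma>0$, Lemma \ref{crucial}(a) gives $\rho\ge c_0 e^{\sigma t}/\sqrt{1+c_0^2 e^{2\sigma t}}$, hence $1-\rho^2\le C e^{-2\sigma t}$ for a uniform constant. This exponential decay of $1-\rho^2$ is exactly what converts a dangerous polynomial-in-$t$ factor into the harmless term $C(1+\sqrt t)\sqrt{1-\rho^2}$ on the right-hand side.

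Concretely, I would proceed as follows. First, compute
$$\dt g-\Delta g=\frac{t(\dt\|A\|^2-\Delta\|A\|^2)+\|A\|^2}{t\|A\|^2+1}+\frac{t^2\|\nabla\|A\|^2\|^2}{(t\|A\|^2+1)^2},$$
and note that the last term equals $\tfrac12\|\nabla g\|^2+\tfrac12\frac{t^2\|\nabla\|A\|^2\|^2}{(t\|A\|^2+1)^2}$; the leftover half of this, namely $\tfrac12\frac{t^2\|\nabla\|A\|^2\|^2}{(t\|A\|^2+1)^2}$, is controlled against the good negative term $-2t\|\nabla A\|^2/(t\|A\|^2+1)$ coming from $\dt\|A\|^2-\Delta\|A\|^2$, using Kato's inequality $\|\nabla\|A\|^2\|\le 2\|A\|\|\nabla A\|$ exactly as in the proof of Lemma \ref{evmean}. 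This disposes of all gradient terms in favour of $\tfrac12\|\nabla g\|^2$. Then the reaction terms give
$$\dt g-\Delta g\le \frac12\|\nabla g\|^2+\frac{t(c_1\|A\|^4+c_2\|A\|^2+c_3)+\|A\|^2}{t\|A\|^2+1}.$$

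For the remaining algebraic term, I would split the Simons quartic term into the ambient-curvature-free part and the ambient-curvature part. The key structural fact, visible already in the computation inside the proof of Lemma \ref{evmean}, is that the ``bad'' contribution of $\|A\|^4$ type to $\dt\|A\|^2$ that is \emph{not} already controlled by the good terms is itself proportional to $\sigma_M(1-u_1^2-u_2^2)\|A\|^2$ up to bounded curvature terms, and by \eqref{lm} one has $1-u_1^2-u_2^2\le 1-\rho^2$. Thus the genuinely dangerous part is bounded by $C(1-\rho^2)\|A\|^2$, while the rest is bounded by $C'\|A\|^2+C''$ with $C',C''$ uniform. Now estimate crudely $\frac{t\cdot C(1-\rho^2)\|A\|^2}{t\|A\|^2+1}\le C(1-\rho^2)$ if one is careless, but to match the promised right-hand side one keeps the $\|A\|^2$ and instead writes $\frac{t C(1-\rho^2)\|A\|^2}{t\|A\|^2+1}\le \sqrt{t}\,C(1-\rho^2)^{1/2}\cdot\frac{\sqrt{t}\,(1-\rho^2)^{1/2}\|A\|^2}{t\|A\|^2+1}$ and bounds the second factor by $(1-\rho^2)^{1/2}$ after using $\frac{\sqrt{t}\|A\|}{t\|A\|^2+1}\le\tfrac12$ together with the (already established, via Theorem \ref{estmean}) uniform bound $\|H\|\le C$ and the inequality $\|A\|^2\le 2\|H\|^2+2\sigma_\gind$-type Gauss estimates that bound the non-$H$ part of $\|A\|^2$ in the area-decreasing case --- more simply, one uses $\frac{t\|A\|^2}{t\|A\|^2+1}\le 1$ and $\sqrt{t(1-\rho^2)}\le C\sqrt t\cdot e^{-\sigma t}\le C$ from Lemma \ref{crucial}(a), which yields the cleanest route. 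Collecting the harmless terms $\frac{tc_3+\|A\|^2+tc_2\|A\|^2}{t\|A\|^2+1}\le c_2+2\|A\|^2$ (using $\|A\|^2\le 2\|A\|^2$ and $\frac{t c_3}{t\|A\|^2+1}\le$ bounded once $t\|A\|^2\ge$ something, with the complementary region $t\|A\|^2\le 1$ trivial) gives $\dt g-\Delta g\le 3\|A\|^2+\tfrac12\|\nabla g\|^2+C(1+\sqrt t)\sqrt{1-\rho^2}$, which is the claim.

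I expect the main obstacle to be the bookkeeping of constants in the last step: one has to be careful that the coefficient of $\|A\|^2$ is exactly $3$ (not $2+\varepsilon$ or $4$), which forces a slightly delicate split of the quartic Simons term into the piece genuinely absorbed by $-2\|\nabla A\|^2$ and the residual piece proportional to $(1-\rho^2)\|A\|^2$, relying essentially on the same cancellation already exploited in Lemma \ref{evmean} (that the ambient curvature contribution to $\dt\|A\|^2$ is bounded by $\sigma_M(1-u_1^2-u_2^2)\|A\|^2$). Everything else --- the Kato-inequality manipulation of gradient terms and the conversion of the time-weight via Lemma \ref{crucial} --- is routine.
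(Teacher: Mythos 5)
Your proposal correctly identifies the general skeleton (Wang's evolution equation for $\|A\|^2$, Kato's inequality to dispose of the gradient terms in favour of $\tfrac12\|\nabla g\|^2$, and the use of $1-u_1^2-u_2^2\le 1-\rho^2$ via \eqref{lm} for the curvature terms), but there is a genuine gap in the central step: you never explain where the exact constant $3$ in front of $\|A\|^2$ comes from, and the mechanism you propose for getting it is the wrong one. The paper bounds the two purely algebraic reaction terms in Wang's equation, namely
$$2\sum_{i,j,k,l}\Big\{\sum_{\alpha}A^{\alpha}_{ij}A^{\alpha}_{kl}\Big\}^2+2\sum_{\alpha,\beta,i,j}\Big\{\sum_{k}\big(A^{\alpha}_{ik}A^{\beta}_{jk}-A^{\beta}_{ik}A^{\alpha}_{jk}\big)\Big\}^2,$$
by $3\|A\|^4$ using the intrinsic rigidity inequality of An-Min and Jimin \cite{anmin}; this is a pointwise linear-algebra fact, independent of the ambient curvature and of $\rho$. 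Once that is in hand, $\frac{3t\|A\|^4+\|A\|^2}{t\|A\|^2+1}=\|A\|^2\cdot\frac{3t\|A\|^2+1}{t\|A\|^2+1}\le 3\|A\|^2$ directly, with no absorption needed. Your proposal instead asserts that the ``bad'' $\|A\|^4$-type contribution ``is itself proportional to $\sigma_M(1-u_1^2-u_2^2)\|A\|^2$'' and tries to absorb a generic $c_1\|A\|^4$ term using the $(1-\rho^2)$-decay. This is a misattribution: the $(1-u_1^2-u_2^2)$ structure multiplies only the ambient-curvature contributions $\mathcal{A}_1$, $\mathcal{A}_2$ (which the paper estimates in the singular frame, using that the ambient curvature of $M\times N$ is split and that $2u_1|u_2|\le 1-u_1^2-u_2^2$ in the area-decreasing regime), not the quartic Simons reaction term, which cannot be made small in terms of $\rho$. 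Without the An-Min--Jimin bound, your route would only give some unspecified constant $c_1$ in place of $3$, and the $(1-\rho^2)$ decay would not repair that.

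Two smaller points. First, in this lemma the paper does not yet apply the exponential decay of $1-\rho^2$ from Lemma \ref{crucial}; the factor $\sqrt{1-\rho^2}$ is left in the conclusion, and the decay is only invoked in the proof of Theorem \ref{thm2}. You conflate the two steps. Second, your invocation of the uniform bound on $\|H\|$ from Theorem \ref{estmean} and of Gauss-equation estimates on $\|A\|^2$ is not needed and does not play a role in the argument; the $\sqrt{t}$ factor in front of $\sqrt{1-\rho^2}$ arises simply from writing $\frac{t\|A\|}{t\|A\|^2+1}=\sqrt t\cdot\frac{\sqrt t\|A\|}{t\|A\|^2+1}\le\tfrac{\sqrt t}{2}$.
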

\begin{proof}
Recall from \cite[Proposition 7.1]{wang1} that
\begin{eqnarray*}
\dt\|A\|^2&=&\Delta\|A\|^2-2\|\nabla^{\perp}A\|^2\nonumber\\
&+&2\sum_{i,j,k,l}\Big\{\sum_{\alpha}A^{\alpha}_{ij}A^{\alpha}_{kl}\Big\}^2
+2\sum_{\alpha,\beta,i,j}
\Big\{\sum_{k}(A^{\alpha}_{ik}A^{\beta}_{jk}
-A^{\beta}_{ik}A^{\alpha}_{jk}\big)\Big\}^2\nonumber\\
&+&4\sum_{\alpha,i,j,k,l}\Big\{A^{\alpha}_{ij}A^{\alpha}_{kl}
-\delta_{kl}\sum_{p}A^{\alpha}_{ip}A^{\alpha}_{jp}\Big\}\tilde{R}_{kilj}\nonumber\\
&+&2\sum_{\alpha,\beta,i,j,k}\Big\{4A^{\alpha}_{jk}A^{\beta}_{ik}
\tilde{R}_{\alpha\beta ji}
+A^{\alpha}_{jk}A^{\beta}_{jk}\tilde{R}_{\alpha i\beta i}\Big\}\nonumber\\
&+&2\sum_{\alpha, i,j,k}A^{\alpha}_{jk}
\Big\{\big({\nabla}_{i}\tilde{R}\big)_{\alpha jki}
+\big({\nabla}_{k}\tilde{R}\big)_{\alpha iji} \Big\},
\end{eqnarray*}
where  the indices are with respect to an arbitrary adapted local orthonormal frame $\{e_1,e_2;e_3,e_4\}$.

From \cite[Theorem 1]{anmin}, we have that
$$
2\sum_{i,j,k,l}\Big\{\sum_{\alpha}A^{\alpha}_{ij}A^{\alpha}_{kl}\Big\}^2
+2\sum_{\alpha,\beta,i,j}
\Big\{\sum_{k}(A^{\alpha}_{ik}A^{\beta}_{jk}
-A^{\beta}_{ik}A^{\alpha}_{jk}\big)\Big\}^2\le 3\|A\|^4.
$$
Consider now the term
\begin{eqnarray*}
\mathcal{A}_1:&=&4\sum_{i,j,k,l,\alpha}\Big\{A^{\alpha}_{ij}A^{\alpha}_{kl}-\delta_{kl}\sum_{p} A^{\alpha}_{ip}A^{\alpha}_{jp}\Big\}\rk_{kilj}\\
&&+2\sum_{i,j,k,\alpha,\beta}\Big\{4A^{\alpha}_{jk}A^{\beta}_{ik}\rk_{\alpha\beta ji}+A^{\alpha}_{jk}A^{\beta}_{jk}\rk_{\alpha i\beta i}\Big\}.
\end{eqnarray*}
In terms of the frame fields introduced in subsection \ref{singular}, we get that
\begin{eqnarray*}
\mathcal{A}_1&=&-4\big(\sigma_Mu^2_1+\sigma_Nu^2_2\big)\big\{\|A_{11}-A_{22}\|^2
+4\|A_{12}\|^2\big\}\\
&&+2\|A^3\|^2u^2_1\big(\lambda^2\sigma_M+\mu^2\sigma_N\big)
+2\|A^4\|^2u^2_1\big(\mu^2\sigma_M+\lambda^2\sigma_N\big)\\
&&-16u_1|u_2|(\sigma_M+\sigma_N)\sigma^{\perp}\\
&\le&-4u^2_2\sigma_N\big\{\|A_{11}-A_{22}\|^2
+4\|A_{12}\|^2\big\}+2u^2_1(\lambda^2+\mu^2)\|A\|^2\sigma_M\\
&&-16u_1|u_2|(\sigma_M+\sigma_N)\sigma^{\perp}.
\end{eqnarray*}
Since the evolving graphs are area decreasing, we see that
$$2u^2_2=2\lambda^2\mu^2u^2_1\le2\lambda\mu u^2_1\le (\lambda^2+\mu^2)u^2_1=1-u^2_1-u^2_2.$$
Additionally,
$$2u_1|u_2|=2\lambda\mu u^2_1\le u^2_1(\lambda^2+\mu^2)\le 1-u^2_1-u^2_2.$$
Because the Riemann surfaces $M$ and $N$ have bounded geometry we deduce that there exists
a constant $C_1$ such that
\begin{equation*}
\mathcal{A}_1\le C_1(1-u^2_1-u^2_2)\|A\|^2.
\end{equation*}
Denote by $\mathcal{A}_2$ the term
\begin{equation*}
\mathcal{A}_2:=2\sum_{\alpha, i,j,k}A^{\alpha}_{jk}
\Big\{\big({\nabla}_{i}\tilde{R}\big)_{\alpha jki}
+\big({\nabla}_{k}\tilde{R}\big)_{\alpha iji} \Big\}.
\end{equation*}
Similarly we deduce that there exists a constant $K_2$ such that
$$\mathcal{A}_2\le K_2\|A\|u^2_1\big(\lambda+\mu+\lambda^3\mu+\lambda\mu^3+\lambda^2\mu^2\big).$$
Because by assumption the map $f$ is area decreasing and since $u_1<1$, we obtain
\begin{eqnarray*}
\mathcal{A}_2&\le& K_2\|A\|u^2_1\big(\lambda+\mu+\lambda^2+\mu^2+\lambda\mu\big)\\
&\le&K_2\|A\|u^2_1\left\{\sqrt{2(\lambda^2+\mu^2)}+\frac{3}{2}(\lambda^2+\mu^2)\right\}\\
&\le&K_2\|A\|\left\{\sqrt{2(1-u^2_1-u^2_2)}+\frac{3}{2}\big(1-u^2_1-u^2_2\big)\right\}\\
&\le&\Bigl(\sqrt{2}+\frac{3}{2}\Bigr)K_2\|A\|\sqrt{1-u^2_1-u^2_2}.
\end{eqnarray*}
Going back to the evolution equation of $\|A\|^2$ we deduce that there are
constants $C_1$ and $C_2$ such that
\begin{eqnarray*}
\dt\|A\|^2\hspace{-4pt}&-&\hspace{-4pt}\Delta\|A\|^2\le
-2\|\nabla^{\perp}A\|^2+3\|A\|^4\\
&+&C_1(1-u^2_1-u^2_2)\|A\|^2+C_2\sqrt{1-u^2_1-u^2_2}\,\|A\|.
\end{eqnarray*}
Let us compute now the evolution equation of $g$. By straightforward computations we have,
\begin{eqnarray*}
\dt g&=&\Delta g+\frac{t}{t\|A\|^2+1}\big(\dt \|A\|^2-\Delta\|A\|^2\big)+\frac{\|A\|^2}{t\|A\|^2+1}+\|\nabla g\|^2\\
&\le&\Delta g+\|\nabla g\|^2-\frac{2 t}{t\|A\|^2+1}\|\nabla^{\perp}A\|^2+\frac{3 t\|A\|^2+1}{ t\|A\|^2+1}\|A\|^2\\
&&+C_1(1-u^2_1-u^2_2)\frac{t\|A\|^2}{t\|A\|^2+1}
+C_2\sqrt{1-u^2_1-u^2_2}\,\frac{ t\|A\|}{t\|A\|^2+1}\\
&\le&\Delta g+\frac{1}{2}\|\nabla g\|^2-\frac{2 t}{ t\|A\|^2+1}\|\nabla\|A\|\|^2+\frac{1}{2}\|\nabla g\|^2+3\|A\|^2\\
&&+C_1(1-u^2_1-u^2_2)\frac{t\|A\|^2}{t\|A\|^2+1}
+C_2\sqrt{1-u^2_1-u^2_2}\,\frac{t\|A\|}{t\|A\|^2+1}.
\end{eqnarray*}
Consequently,
\begin{eqnarray*}
\dt g&\le&\Delta g+\frac{1}{2}\|\nabla g\|^2+3\|A\|^2\\
&&+C_1\sqrt{1-u^2_1-u^2_2 }\,\frac{t\|A\|^2}{t\|A\|^2+1}
+C_2\sqrt{1-u^2_1-u^2_2 }\,\frac{\sqrt{t}\sqrt{t}\|A\|}{t\|A\|^2+1}\\
&\le&\Delta g+\frac{1}{2}\|\nabla g\|^2+3\|A\|^2+C\big(1+\sqrt{t}\big)\sqrt{1-\rho^2}
\end{eqnarray*}
where $C$ is a positive constant.
This completes the proof.
\end{proof}

In the following result we give the decay estimates for the norm of the second fundamental form.

\begin{theorem}\label{thm2}
Let $(M,\gm)$ be a complete Riemann surfaces as in Theorem A and let $f:M\to N$ be a
strictly area decreasing map. Let $\sigma:=\min\sigma_M$. Then, the following statements hold true:
\begin{enumerate}[\rm (a)]
\item
If $\sigma>0$, then there exists a constant $C$ such that
$$\|A\|^2\le{C}{t^{-1}}.$$
\item
If $\sigma=0$, then there exists a constant $C$ such that
$$\int_M\|A\|^2\,\Omega_M\le Ct^{-1}.$$
\end{enumerate}
\end{theorem}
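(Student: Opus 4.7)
For part (a), I would adapt the strategy used for $\|H\|^2$ in Theorem \ref{estmean}(b). Define the auxiliary function
\begin{equation*}
\Theta := \log\frac{t\|A\|^2+1}{\rho^q}
\end{equation*}
for $q$ chosen large enough, say $q=2$. Lemma \ref{evsecond} controls the evolution of $g=\log(t\|A\|^2+1)$ with reaction coefficient $3$ in front of $\|A\|^2$, while Lemma \ref{evzeta} with $\eta(\rho)=\rho^q$ yields $\dt(q\log\rho)-\Delta(q\log\rho)\ge 2q\|A\|^2$ plus manifestly non-negative terms (using the inequality $-2\langle\nabla\varphi,\nabla\vartheta\rangle+\frac{1}{2\rho}\|\nabla\rho\|^2\ge 0$ derived in the proof of Lemma \ref{crucial} and the sign condition on $\sigma_M,\sigma_N$). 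Subtracting, the coefficient of $\|A\|^2$ in $\dt\Theta-\Delta\Theta$ becomes $\le 3-2q\le -1$. The gradient pieces are reorganized as in the proof of Theorem \ref{estmean}, by writing $\nabla g=\nabla\Theta+q\nabla\rho/\rho$ and absorbing leftover squared-gradient terms into a drift $\tfrac{1}{2}\langle\nabla\Theta,\cdot\rangle$. The key input specific to $\sigma>0$ is Lemma \ref{crucial}(a), which gives $\sqrt{1-\rho^2}\le Ce^{-\sigma t}$, so that the inhomogeneous term $C(1+\sqrt{t})\sqrt{1-\rho^2}$ in Lemma \ref{evsecond} is uniformly bounded (in fact exponentially decaying), and the curvature term arising from Lemma \ref{evzeta} is bounded by the bounded-geometry assumption. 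The parabolic maximum principle then forces $\Theta\le C$, i.e., $t\|A\|^2+1\le e^{C}\rho^q\le e^{C}$, which is the claim.

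For part (b), where $\sigma=0$, the pointwise approach collapses because $(1+\sqrt{t})\sqrt{1-\rho^2}$ is no longer uniformly bounded in time. My plan is to pass to an integrated estimate. Combining the Gauss equation
\begin{equation*}
\|A\|^2=\|H\|^2+2(u_1^2\sigma_M+u_2^2\sigma_N)-2\sigma_{\gind}
\end{equation*}
with the Gauss--Bonnet identity $\int_M\sigma_{\gind}\,\Omega_{\gind}=2\pi\chi(M)$ yields
\begin{equation*}
\int_M\|A\|^2\,\Omega_{\gind}=\int_M\|H\|^2\,\Omega_{\gind}+2\bigl(F(t)-2\pi\chi(M)\bigr),
\end{equation*}
where $F(t):=\int_M(u_1^2\sigma_M+u_2^2\sigma_N)\,\Omega_{\gind}$. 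Theorem \ref{estmean}(b), together with the uniform area bound coming from the identity $\frac{d}{dt}\mathrm{Area}(\Gamma(f_t))=-\int\|H\|^2\Omega_{\gind}$, controls the first summand by $C/t$. Theorem \ref{blow1} ensures smooth convergence of the flow to a totally geodesic limit $M_\infty$, so a second application of Gauss--Bonnet on $M_\infty$ (where $\|A\|^2=0$) gives $F(\infty)=2\pi\chi(M)$. To extract the rate $1/t$ for $F(t)-F(\infty)$, I would differentiate $F$ along the flow using Lemma \ref{jacobians} and the chain rule applied to $\sigma_M\circ\pi_M\circ F$ and $\sigma_N\circ\pi_N\circ F$, bounding $|F'(t)|$ in terms of integrals of $\|H\|$ thanks to the pointwise bounds on $\|A\|$, $\|\nabla\sigma_M\|$ and $\|\nabla\sigma_N\|$ coming from the bounded-geometry hypothesis, and then integrating from $t$ to $+\infty$. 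The equivalence between $\int_M\|A\|^2\,\Omega_M$ and $\int_M\|A\|^2\,\Omega_{\gind}$ is automatic because the Jacobian $u_1=\Omega_M/\Omega_{\gind}$ is uniformly bounded above and below by the strict area-decreasing property.

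The main technical obstacle lies in the last step of part (b): a direct bound $|F'(t)|\le C/\sqrt{t}$ coming from $\|H\|\le C/\sqrt{t}$ only integrates to $O(1/\sqrt{t})$, so in order to obtain the sharp rate $1/t$ one must exploit the $L^2$-in-time integrability $\int_0^{+\infty}\int\|H\|^2\Omega_{\gind}\,dt<+\infty$ provided by the area identity, possibly combined with Cauchy--Schwarz against the pointwise bound $\|A\|\le C$, or alternatively by inserting a suitable test function in the integrated version of Lemma \ref{evsecond} that couples $\|A\|^2$ directly to the topological defect $F(t)-2\pi\chi(M)$.
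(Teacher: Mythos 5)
Your overall architecture for part (a) --- comparing $g=\log(t\|A\|^2+1)$ with $\zeta=\log\eta(\rho)$ so as to make the reaction coefficient of $\|A\|^2$ non-positive --- is exactly the paper's, but your choice $\eta(\rho)=\rho^{q}$ does not work as stated. In Lemma \ref{evzeta} the term
\[
-\frac{1}{2\rho\eta^{2}}\big(\eta\eta_{\rho}+2\rho\eta\eta_{\rho\rho}-\rho\eta_{\rho}^{2}\big)\|\nabla\rho\|^{2}
\]
equals $-\tfrac{q^{2}-q}{2\rho^{2}}\|\nabla\rho\|^{2}$ for $\eta=\rho^{q}$, which is strictly \emph{negative} for $q\ge 2$; it is therefore not one of your ``manifestly non-negative'' terms, and after subtraction it reappears in $\dt\Phi-\Delta\Phi$ as the positive term $+\tfrac{q^{2}-q}{2\rho^{2}}\|\nabla\rho\|^{2}$. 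This term is quadratic in $\nabla\zeta$, not of the form $\langle\nabla\Theta,\cdot\rangle$, so it cannot be absorbed into a drift. The paper's choice $\eta(\rho)=\big(\sqrt{\rho}-\tfrac13\big)^{2}$ is engineered precisely so that $\eta\eta_{\rho}+2\rho\eta\eta_{\rho\rho}-\rho\eta_{\rho}^{2}\equiv 0$ while still giving $\tfrac{3\eta-2\rho\eta_{\rho}}{\eta}=\tfrac{3(\sqrt{\rho}-1)}{3\sqrt{\rho}-1}\le 0$ once $\rho>1/9$, which holds for $t\ge t_{0}$ by Lemma \ref{crucial}(a). Your route could in principle be repaired by first proving a pointwise bound of the type $\|\nabla\rho\|^{2}\le C(1-\rho^{2})\|A\|^{2}$ (from the explicit expression of $\nabla\varphi,\nabla\vartheta$ in terms of the second fundamental form) and then using the exponential decay of $1-\rho^{2}$ from Lemma \ref{crucial}(a) to absorb the bad gradient term into the negative multiple of $\|A\|^{2}$ for large $t$; but that is an additional, nontrivial step you neither state nor prove.

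For part (b) you write down the correct Gauss/Gauss--Bonnet identity but then pursue a rate of convergence for $F(t)-2\pi\chi(M)$, which, as you yourself concede, you cannot close at order $t^{-1}$. The point you are missing is that no rate is needed: since $\sigma_{N}\le\sigma=0$ and $0\le u_{1}\le 1$, one has pointwise $\sigma_{M}u_{1}^{2}+\sigma_{N}u_{2}^{2}\le\sigma_{M}u_{1}$, and $u_{1}\,\Omega_{\gind(t)}=\Omega_{M}$, so for every $t$
\[
\int_{M}\big(\sigma_{M}u_{1}^{2}+\sigma_{N}u_{2}^{2}\big)\,\Omega_{\gind(t)}\le\int_{M}\sigma_{M}\,\Omega_{M}=2\pi\chi(M)=\int_{M}\sigma_{\gind(t)}\,\Omega_{\gind(t)}.
\]
The two curvature integrals therefore cancel in the integrated Gauss equation, leaving $\int_{M}\|A\|^{2}\,\Omega_{\gind(t)}\le\int_{M}\|H\|^{2}\,\Omega_{\gind(t)}\le Ct^{-1}$ directly from Theorem \ref{estmean}(b) and the monotonicity of the area; the passage from $\Omega_{\gind(t)}$ to $\Omega_{M}$ costs nothing since $u_{1}\le 1$. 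Your proposed differentiation of $F$ along the flow is thus both unnecessary and, as written, insufficient to obtain the claimed decay.
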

\begin{proof}
Recall from Theorem \ref{blow1} that $\|A\|$ is uniformly bounded and that the flow exists for all time.
Let us now consider the following cases depending on the sign of $\sigma$.

(a) Suppose at first that $\sigma>0$. Consider the function $\Phi$ given by the formula
$$\Phi:=g-\zeta=\log\frac{t\|A\|^2+1}{\eta(\rho)},$$
where $\eta(\rho)$ is a positive increasing function depending on $\rho$ that will be determined
later.
From Lemma \ref{evzeta} and Lemma \ref{evsecond}, the evolution equation of $\Phi$ is
\begin{eqnarray*}
\dt\Phi&\le&\Delta\Phi+\frac{3\eta-2\rho\eta_{\rho}}{\eta}\|A\|^2+C\big(1+\sqrt{t}\big)\sqrt{1-\rho^2}\\
&&+\frac{1}{2}\langle\nabla\Phi,\nabla g+\nabla\zeta\rangle
+\frac{1}{2\rho\eta^2}\big(\eta\eta_{\rho}
+2\rho\eta\eta_{\rho\rho}-\rho\eta_{\rho}^2\big)\|\nabla\rho\|^2\\
&&-\frac{2\eta_{\rho}}{\eta}\left\{(1-\rho)\sigma_Mu^2_1-(1+\rho)\sigma_Nu^2_2\right\}.
\end{eqnarray*}
Hence,
\begin{eqnarray*}
\dt\Phi&\le&\Delta\Phi+\frac{3\eta-2\rho\eta_{\rho}}{\eta}\|A\|^2+C\big(1+\sqrt{t}\big)\sqrt{1-\rho^2}\\
&&+\frac{1}{2}\langle\nabla\Phi,\nabla g+\nabla\zeta\rangle
+\frac{1}{2\rho\eta^2}\big(\eta\eta_{\rho}
+2\rho\eta\eta_{\rho\rho}-\rho\eta_{\rho}^2\big)\|\nabla\rho\|^2\\
&&-\frac{2\sigma\rho\eta_{\rho}}{\eta}(1-u^2_1-u^2_2).
\end{eqnarray*}
Since $\sigma>0$, we get that
\begin{eqnarray*}
\dt\Phi&\le&\Delta\Phi+\frac{3\eta-2\rho\eta_\rho}{\eta}\|A\|^2+C\big(1+\sqrt{t}\big)\sqrt{1-\rho^2}\\
&&+\frac{1}{2}\langle\nabla\Phi,\nabla g+\nabla\zeta\rangle
+\frac{1}{2\rho\eta^2}\big(\eta\eta_{\rho}
+2\rho\eta\eta_{\rho\rho}-\rho\eta^2_{\rho}\big)\|\nabla\rho\|^2.
\end{eqnarray*}
Let us choose for $\eta$ the smooth function given by
$$\eta(\rho):=\left(-\frac{1}{3}+\sqrt{\rho}\,\right)^2.$$
Since the flow exists for all time, from Lemma \eqref{crucial}(a) and from the fact that $\rho\le 1$ we see that $\rho$ tends to $1$ uniformly
as time tends to infinity. Thus, there exists a $t_0>0$ such that $\eta(\rho)>0$ for all $t\in[t_0,+\infty)$. Moreover, for this
choice of $\eta$, we see that
$$\frac{3\eta-2\rho\eta_{\rho}}{\eta}=\frac{3\big(\sqrt{\rho}-1\big)}{3\sqrt{\rho}-1}\le 0.$$
By making again use of Lemma \ref{crucial}(a), we deduce
that there exists a positive constant $c_0$ such that
\begin{eqnarray*}
\dt\Phi-\Delta\Phi-\frac{1}{2}\langle\nabla\Phi,\nabla g+\nabla\zeta\rangle&\le& C\big(1+\sqrt{t}\big)\sqrt{1-\rho^2}
\le\frac{C\big(1+\sqrt{t}\big)}{\sqrt{1+c^2_0 e^{2\sigma t}}}\\
&\le&\frac{C}{c_0}\big(1+\sqrt{t}\big)e^{-\sigma t}.
\end{eqnarray*}
Let $y$ be the solution of the ordinary differential equation
$$y'(t)=\frac{C}{c_0}\big(1+\sqrt{t}\big)e^{-\sigma t},\quad y(0)=\max_{x\in M} \Phi(x,0).$$
From the parabolic maximum principle it follows that $\Phi(x,t)\le y(t)$ for any
$(x,t)\in M\times[0,\infty)$. Therefore
$\Phi$ is uniformly bounded because the solution $y$ is bounded. 
This implies that there exists a constant, which we denote again by $C$,
such that
$$t\|A\|^2\le C.$$
(b) Suppose that $\sigma=0$. Denote by $\Omega_{\gind(t)}$ the volume forms of the induced
metrics. Because of the formula
$$\dt\left(\int_{M}\Omega_{\gind(t)}\right)=-\int_{M} \|H\|^2\Omega_{\gind(t)}\le 0,$$
we obtain that
$$\int_M\Omega_{\gind(t)}\le \int_M\Omega_{\gind(0)}=\operatorname{constant}.$$
Now from Theorem \ref{estmean}(b) it follows that there is a non-negative constant $C$ such that
$$\int_M \|H\|^2\Omega_{\gind(t)}\le\frac{C}{t}\int_M\Omega_{\gind(t)}
\le\frac{C}{t}\int_M\Omega_{\gind(0)}.$$
Recall that due to our assumptions we have that
$u^2_2\le u^2_1\le 1$ and $\min\sigma_M\ge 0\ge \sup\sigma_N.$
Moreover, recall that
$$\Omega_{\gind(t)}=\sqrt{(1+\lambda^2)(1+\mu^2)}\Omega_M=\frac{1}{\,u_1}\Omega_M.$$
From the Gau{\ss} equation \eqref{Gausscurv} and the Gau{\ss}-Bonnet formula we get
\begin{eqnarray*}
&&\int_M\|A\|^2\Omega_{\gind(t)}
=\int_M\|H\|^2\Omega_{\gind(t)}\\
&&\quad\quad\quad+2\int_M\big(\sigma_Mu^2_1+\sigma_Nu^2_2\big)\Omega_{\gind(t)}
-2\int_M \sigma_{\gind(t)}\Omega_{\gind(t)}\\
&&\quad\quad\quad\le 2\int_M\sigma_Mu^2_1\Omega_{\gind(t)}
-2\int_M \sigma_{\gind(t)}\operatorname{Vol}_{\gind(t)}
+\int_M\|H\|^2\operatorname{Vol}_{\gind(t)}\\
&&\quad\quad\quad\le 2\int_M\sigma_Mu_1\Omega_{\gind(t)}
-2\int_M \sigma_{\gind(t)}\Omega_{\gind(t)}
+\int_M\|H\|^2\operatorname{Vol}_{\gind(t)}\\
&&\quad\quad\quad\le2\int_M\sigma_M\Omega_M
-2\int_M \sigma_{\gind(t)}\Omega_{\gind(t)}
+\int_M\|H\|^2\Omega_{\gind(t)}\\
&&\quad\quad\quad=\int_M\|H\|^2\Omega_{\gind(t)}.
\end{eqnarray*}
From the above inequality we get the decay estimate of the
$L^2$-norm of $\|A\|$.
This completes the proof of part (b).
\end{proof}

From Theorems \ref{blow1}, \ref{estmean} and \ref{thm2} we immediately obtain 
the results stated in Theorem B.

\section{Proof of the Theorem A}
Suppose that $(M,\gm)$ and $(N,\gn)$ are two Riemann surfaces satisfying the assumptions of Theorem \ref{thma}
and let $\sigma=\min\sigma_M$.
Let $f:M\to N$ be an area decreasing map. Then the property
of being area decreasing is preserved by the flow and, moreover, the flow remains graphical
for all time. In the matter of fact, there are two options: either the map $f$ is immediately
deformed
into a strictly area decreasing one or each map $f_t$, $t\in[0,T)$, is area preserving, $N$ is 
compact and the curvatures of $M$ and $N$ are constant and satisfy
$\sigma_M=\sigma=\sigma_N.$
The area preserving case is completely solved in \cite{wang2} and \cite{smoczyk1}. 
Thus, it remains to examine the case where $f$ becomes strictly area decreasing. In this case,
from Theorem \ref{blow1} we know that the graphical mean curvature flow, independently of the sign of $\sigma$, 
smoothly converges to a minimal surface $M_{\infty}$.

Suppose that $\sigma>0$. In this case the flow is smoothly
converging to a graphical minimal surface $M_{\infty}=\Gamma(f_{\infty})$ of
$M\times N$. Due to Theorem \ref{thm2}(b), $M_{\infty}$ must be totally geodesic
and $f_{\infty}$ is a constant map.

Assume now that $\sigma=0$. As in the previous case we have smooth convergence 
of the flow to a minimal graphical surface
$M_{\infty}=\Gamma(f_{\infty})$ of $M\times N$, where $f_{\infty}$ is a strictly area 
decreasing map. From the integral inequality of Theorem \ref{thm2}(b) we deduce that
$$\int_M\|A_{\infty}\|^2=0.$$
Consequently, $M_{\infty}$ must be a totally geodesic graphical surface.

{\bf Acknowledgments:} This work was initiated during the research visit of both authors at the Max-Planck-Institut f\"ur Mathematik
in den Naturwissenschaften Leipzig in August 2014. The authors would like to express their gratitude to
J{\"u}rgen Jost and the Institute for the
excellent research conditions and the hospitality.

\begin{bibdiv}
\begin{biblist}

\bib{andrews}{book}{
   author={Andrews, B.},
   author={Hopper, C.},
   title={The Ricci flow in Riemannian geometry, A complete proof of the differentiable 1/4-pinching sphere
   theorem},
   series={Lecture Notes in Mathematics},
   volume={2011},
   publisher={Springer, Heidelberg},
   date={2011},
}

\bib{anmin}{article}{
   author={An-Min, L.},
   author={Jimin, L.},
   title={An intrinsic rigidity theorem for minimal submanifolds in a
   sphere},
   journal={Arch. Math. (Basel)},
   volume={58},
   date={1992},
   pages={582--594},
}

\bib{aiyama}{article}{
   author={Aiyama, R.},
   title={Lagrangian surfaces with circle symmetry in the complex two-space},
   journal={Michigan Math. J.},
   volume={52},
   date={2004},
   pages={491--506},
}

\bib{cheeger}{article}{
   author={Cheeger, J.},
   author={Gromov, M.},
   author={Taylor, M.},
   title={Finite propagation speed, kernel estimates for functions of the
   Laplace operator, and the geometry of complete Riemannian manifolds},
   journal={J. Differential Geom.},
   volume={17},
   date={1982},
   pages={15--53},
}

\bib{chen1}{article}{
   author={Chen, B.-Y.},
   author={Morvan, J.-M.},
   title={G\'eom\'etrie des surfaces lagrangiennes de ${\bf C}^2$},
   journal={J. Math. Pures Appl. (9)},
   volume={66},
   date={1987},
   pages={321--325},
}

\bib{chen2}{article}{
   author={Chen, B.-L.},
   author={Yin, L.},
   title={Uniqueness and pseudolocality theorems of the mean curvature flow},
   journal={Comm. Anal. Geom.},
   volume={15},
   date={2007},
   pages={435--490},
}

\bib{chen}{article}{
   author={Chen, J.},
   author={He, W.},
   title={A note on singular time of mean curvature flow},
   journal={Math. Z.},
   volume={266},
   date={2010},
   pages={921--931},
}

\bib{symp2}{article}{
   author={Chen, J.},
   author={Li, J.},
   title={Singularity of mean curvature flow of Lagrangian submanifolds},
   journal={Invent. Math.},
   volume={156},
   date={2004},
   pages={25--51},
}

\bib{symp1}{article}{
   author={Chen, J.},
   author={Li, J.},
   title={Mean curvature flow of surface in $4$-manifolds},
   journal={Adv. Math.},
   volume={163},
   date={2001},
   number={2},
   pages={287--309},
}

\bib{symp6}{article}{
   author={Chen, J.},
   author={Tian, G.},
   title={Moving symplectic curves in K\"ahler-Einstein surfaces},
   journal={Acta Math. Sin. (Engl. Ser.)},
   volume={16},
   date={2000},
   pages={541--548},
}

\bib{chern}{article}{
   author={Chern, S.-S.},
   author={Wolfson, J.G.},
   title={Minimal surfaces by moving frames},
   journal={Amer. J. Math.},
   volume={105},
   date={1983},
   pages={59--83},
}

\bib{chow}{book}{
   author={Chow, B.},
   author={Chu, S.-C.},
   author={Glickenstein, D.},
   author={Guenther, C.},
   author={Isenberg, J.},
   author={Ivey, T.},
   author={Knopf, D.},
   author={Lu, P.},
   author={Luo, F.},
   author={Ni, L.},
   title={The Ricci flow: techniques and applications. Geometric aspects. Part I},
   series={Mathematical Surveys and Monographs},
   volume={135},
   publisher={American Mathematical Society, Providence, RI},
   date={2007},
}

\bib{cooper}{book}{
   author={Cooper, A.A.},
   title={Mean curvature flow in higher codimension},
   note={Thesis (Ph.D.)--Michigan State University},
   publisher={ProQuest LLC, Ann Arbor, MI},
   date={2011},
}

\bib{hamilton4}{article}{
   author={Hamilton, R.},
   title={The formation of singularities in the Ricci flow},
   conference={
      title={Surveys in differential geometry, Vol.\ II},
      address={Cambridge, MA},
      date={1993},
   },
   book={
      publisher={Int. Press, Cambridge, MA},
   },
   date={1995},
   pages={7--136},
}

\bib{hamilton3}{article}{
   author={Hamilton, R.},
   title={A compactness property for solutions of the Ricci flow},
   journal={Amer. J. Math.},
   volume={117},
   date={1995},
   pages={545--572},
}

\bib{symp3}{article}{
   author={Han, X.},
   author={Li, J.},
   title={The mean curvature flow approach to the symplectic isotopy
   problem},
   journal={Int. Math. Res. Not.},
   date={2005},
   number={26},
   pages={1611--1620},
}

\bib{symp4}{article}{
   author={Han, X.},
   author={Li, J.},
   author={Yang, L.},
   title={Symplectic mean curvature flow in $\bold{CP}^2$},
   journal={Calc. Var. Partial Differential Equations},
   volume={48},
   date={2013},
   pages={111--129},
}

\bib{hasanis2}{article}{
   author={Hasanis, Th.},
   author={Savas-Halilaj, A.},
   author={Vlachos, Th.},
   title={On the Jacobian of minimal graphs in $\Bbb R^4$},
   journal={Bull. Lond. Math. Soc.},
   volume={43},
   date={2011},
   pages={321--327},
}

\bib{hasanis1}{article}{
   author={Hasanis, Th.},
   author={Savas-Halilaj, A.},
   author={Vlachos, Th.},
   title={Minimal graphs in $\Bbb R^4$ with bounded Jacobians},
   journal={Proc. Amer. Math. Soc.},
   volume={137},
   date={2009},
   pages={3463--3471},
}

\bib{symp5}{article}{
   author={Li, J.},
   author={Yang, L.},
   title={Symplectic mean curvature flows in K\"ahler surfaces with positive
   holomorphic sectional curvatures},
   journal={Geom. Dedicata},
   volume={170},
   date={2014},
   pages={63--69},
}

\bib{lubbe}{book}{
   author={Lubbe, F.},
   title={Curvature estimates for graphical mean curvature flow in higher codimension},
   publisher={Hannover : Technische Informationsbibliothek und Universit\"atsbibliothek Hannover (TIB)},
   date={2015},
}

\bib{morgan}{book}{
   author={Morgan, J.},
   author={Tian, G.},
   title={Ricci flow and the Poincar\'e conjecture},
   series={Clay Mathematics Monographs},
   volume={3},
   publisher={American Mathematical Society, Providence, RI; Clay
   Mathematics Institute, Cambridge, MA},
   date={2007},
}

\bib{osserman}{article}{
   author={Osserman, R.},
   title={Global properties of minimal surfaces in $E^{3}$ and $E^{n}$},
   journal={Ann. of Math. (2)},
   volume={80},
   date={1964},
   pages={340--364},
}

\bib{savas3}{article}{
   author={Savas-Halilaj, A.},
   author={Smoczyk, K.},
   title={Evolution of contractions by mean curvature flow},
   journal={Math. Ann.},
   volume={361},
   date={2015},
   pages={725--740},
}
\bib{savas2}{article}{
   author={Savas-Halilaj, A.},
   author={Smoczyk, K.},
   title={Homotopy of area decreasing maps by mean curvature flow},
   journal={Adv. Math.},
   volume={255},
   date={2014},
   pages={455--473},
}

\bib{savas1}{article}{
   author={Savas-Halilaj, A.},
   author={Smoczyk, K.},
   title={Bernstein theorems for length and area decreasing minimal maps},
   journal={Calc. Var. Partial Differential Equations},
   volume={50},
   date={2014},
   pages={549--577},
}

\bib{schoen}{article}{
   author={Schoen, R.},
   title={The role of harmonic mappings in rigidity and deformation
   problems},
   conference={
      title={Complex geometry},
      address={Osaka},
      date={1990},
   },
   book={
      series={Lecture Notes in Pure and Appl. Math.},
      volume={143},
      publisher={Dekker},
      place={New York},
   },
   date={1993},
   pages={179--200},
}

\bib{simon}{article}{
   author={Simon, L.},
   title={Asymptotics for a class of nonlinear evolution equations, with
   applications to geometric problems},
   journal={Ann. of Math. (2)},
   volume={118},
   date={1983},
   pages={525--571},
}

\bib{stw}{article}{
   author={Smoczyk, K.},
   author={Tsui, M.-P.},
   author={Wang, M.-T.},
   title={Curvature decay estimates of graphical mean curvature flow in higher co-dimensions},
   journal={Trans. Amer. Math. Soc.},
   volume={articles in press},
   date={2014},
   pages={1--17},
}

\bib{sm}{article}{
   author={Smoczyk, K.},
   title={Mean curvature flow in higher codimension-Introduction and survey},
   journal={Global Differential Geometry, Springer Proceedings in Mathematics},
   volume={12},
   date={2012},
   pages={231--274},
}

\bib{smoczyk1}{article}{
   author={Smoczyk, K.},
   title={Angle theorems for the Lagrangian mean curvature flow},
   journal={Math. Z.},
   volume={240},
   date={2002},
   pages={849--883},
}

\bib{lagrangian}{article}{
   author={Smoczyk, K.},
   title={Nonexistence of minimal Lagrangian spheres in hyperK\"ahler
   manifolds},
   journal={Calc. Var. Partial Differential Equations},
   volume={10},
   date={2000},
   pages={41--48},
}

\bib{sm1}{article}{
   author={Smoczyk, K.},
   title={A canonical way to deform a Lagrangian submanifold},
   journal={arXiv: dg-ga/9605005},
   date={1996},
   pages={1--16},
}


\bib{wang2}{article}{
   author={Wang, M.-T.},
   title={Deforming area preserving diffeomorphism of surfaces by mean
   curvature flow},
   journal={Math. Res. Lett.},
   volume={8},
   date={2001},
   pages={651--661},
}

\bib{wang1}{article}{
   author={Wang, M.-T.},
   title={Mean curvature flow of surfaces in Einstein four-manifolds},
   journal={J. Differential Geom.},
   volume={57},
   date={2001},
   pages={301--338},
}

\bib{yau1}{article}{
   author={Yau, S.-T.},
   title={A general Schwarz lemma for K\"ahler manifolds},
   journal={Amer. J. Math.},
   volume={100},
   date={1978},
   pages={197--203},
}
		
\end{biblist}
\end{bibdiv}

\end{document}